\newtheorem{thm}{Th\'eor\`eme}[section]
\newtheorem{prop}[thm]{Proposition}
\newtheorem{lemma}[thm]{Lemme}
\theoremstyle{definition}
\theoremstyle{remark}
\newtheorem{rmk}[thm]{Remarque}
\numberwithin{equation}{section}
\newcommand{\Q}{\mathbb Q}
\newcommand{\C}{\mathbb C}
\newcommand{\Z}{\mathbb Z}
\renewcommand{\P}{\mathbb P}
\newcommand{\Spec}{\operatorname{Spec}}
\renewcommand{\O}{\mathcal O}
\renewcommand{\phi}{\varphi}
 \def\L{{\mathcal L}}
  \def\k{\overline{k}}
  \def\X{\overline{X}}
\def\Pic{{\rm Pic}}
\title[Z\'ero-cycles]{Z\'ero-cycles sur les surfaces de del Pezzo (Variations sur un th\`eme de Daniel Coray)}
\author{Jean-Louis Colliot-Th\'el\`ene}
\address{Universit\'e Paris-Saclay, CNRS, Laboratoire de math\'ematiques d'Orsay, 91405, Orsay, France.}
\email{jlct@math.u-psud.fr}
\date{27 septembre 2020}
\begin{document}

	\maketitle
	\hypersetup{backref=true}

	\def\P{{\bf P}}

\bigskip

{\it R\'esum\'e}
Soit   $X$ une surface projective, lisse, g\'eom\'etriquement rationnelle sur un corps 
de caract\'eristique z\'ero. On lui associe deux entiers $N(X)$ et $M(X)$,  fonctions 
 simples du carr\'e de la classe canonique. On \'etablit  les propri\'et\'es suivantes.

(a) Si le pgcd des degr\'es des points ferm\'es sur $X$ est $1$, alors il existe des points
ferm\'es dont les degr\'es  sont  au plus \'egaux \`a  $N(X)$ et sont premiers entre eux dans leur ensemble. 

(b) Si $X$ poss\`ede un point rationnel, alors tout z\'ero-cycle sur $X$
de degr\'e au moins \'egal \`a $M(X)$  est rationnellement \'equivalent \`a un z\'ero-cycle effectif,
 et les points ferm\'es
de degr\'e au plus \'egal \`a  $M(X)$ engendrent le groupe de Chow des z\'ero-cycles de $X$.
 
Le r\'esultat (a)  g\'en\'eralise un th\'eor\`eme de Daniel Coray  sur les surfaces cubiques  (1974).
  Une combinaison de th\'eor\`emes de Bertini
et d'utilisation de corps fertiles rend ici ses arguments plus flexibles.
On \'etablit ensuite les r\'esultats par consid\'eration des diff\'erents types birationnels
de surfaces g\'eom\'etriquement rationnelles : surfaces de del Pezzo et surfaces
fibr\'ees en coniques (ces derni\`eres d\'ej\`a \'etudi\'ees avec D. Coray en 1979).

Un dernier paragraphe discute l'existence de points ferm\'es de degr\'e 3  non align\'es
sur une surface cubique sans point rationnel. On la relie \`a la question de la
densit\'e des points rationnels sur une surface de del Pezzo de degr\'e 1.

\medskip
  
{\it Summary}
Let $X$ be a smooth, projective, geometrically rational surface over a field
of characteristic zero. To any such surface one associates two integers $N(X)$
and $M(X)$ which are simple functions of the square of the canonical class.
We prove:

(a) If the gcd of the degrees of closed points on $X$ is $1$, then there exist
closed points on $X$ the degrees of which are coprime to one another as a whole
and are   less than or equal to $N(X)$.

(b) If $X$ has a rational point, then any zero-cycle on $X$ of degree at least 
equal to $M(X)$ is rationally equivalent to an effective cycle. Effective zero-cycles
of degree less than or equal to $M(X)$ generate the Chow group of $X$.

Result (a) extends a theorem on   cubic surfaces 
obtained by Daniel Coray in his thesis (1974). Combining Bertini theorems
and large  fields, we introduce some flexibility in his method.
The  results (a) and (b) then follow from
a case by case analysis of the various birational equivalence classes of geometrically rational surfaces :
 del Pezzo surfaces and conic bundle surfaces  
 (the latter type had been handled with D. Coray in 1979).
 
In a last section,  for smooth cubic surfaces without a rational point,
we relate the question whether there exists a degree 3 point 
which is not on a line to the question whether rational points are dense
on a del Pezzo surface of degree 1.

\section{Introduction}

 Soient $k$ un corps et $X$ une $k$-vari\'et\'e alg\'ebrique, par quoi l'on entend un $k$-sch\'ema
s\'epar\'e de type fini sur $k$. Donnons quelques rappels sur les z\'ero-cycles
et l'\'equivalence rationnelle \cite[Chap. I]{F}.
Un z\'ero-cycle sur $X$ est une combinaison lin\'eaire \`a coefficients entiers de points ferm\'es.
\`A tout tel z\'ero-cycle $z=\sum_{P}n_{P}P$, avec $P$ point ferm\'e et $n_{P} \in \Z$ nul sauf pour
un nombre fini de points ferm\'es $P$, on associe son degr\'e 
$${\rm deg}_{k}(z): = \sum_{P} n_{P} [k(P):k] \in \Z,$$
o\`u $k(P)$ est  le corps r\'esiduel d'un point ferm\'e $P$, et $[k(P):k]$ est le degr\'e de cette
extension finie de corps.  Le groupe ab\'elien libre $Z_{0}(X)$ des z\'ero-cycles 
contient le sous-groupe
des z\'ero-cycles rationnellement \'equivalents \`a z\'ero. Celui-ci est  par d\'efinition
engendr\'e par les z\'ero-cycles de la forme $p_{*}({\rm div}_{C}(f))$, o\`u $C$ est une 
courbe sur $k$, normale, int\`egre, de corps des fonctions rationnelles $k(C)$, o\`u $p: C \to X$
est un $k$-morphisme propre,
o\`u
$f \in k(C)^*$ est une fonction rationnelle non nulle sur $C$, o\`u ${\rm div}_{C}(f)) \in Z_{0}(C)$
est son diviseur sur $C$, qui est un z\'ero-cycle sur $C$, et o\`u $p_{*} : Z_{0}(C) \to Z_{0}(X)$
est l'image directe par morphisme propre.   Le quotient de $Z_{0}(X)$ par le sous-groupe des
z\'ero-cycles rationnellement \'equivalents \`a z\'ero est appel\'e groupe de Chow des z\'ero-cycles sur $X$
et est not\'e $CH_{0}(X)$. Lorsque $X$ est propre sur $k$, l'application ${\rm deg}_{k}: Z_{0}(X) \to \Z$
passe au quotient par l'\'equivalence rationnelle (puisque le degr\'e du diviseur d'une fonction rationnelle sur une courbe
propre, normale, int\`egre, est nul). On a donc dans ce cas une application induite ${\rm deg}_{k} : CH_{0}(X) \to \Z$.
On note alors $A_{0}(X)$ le noyau de cette application.

\medskip

Un z\'ero-cycle $\sum_{P}n_{P}P$ est dit effectif si l'on a $n_{P} \geq 0$
pour tout $P$. Il y a identification entre l'ensemble $X(k)$ des points $k$-rationnels de $X$
et l'ensemble des  z\'ero-cycles effectifs de degr\'e 1 de $X$.

\medskip

Si $X$ est une courbe $C$  projective, lisse, g\'eom\'etriquement connexe de genre $g $ sur le corps $k$,
l'in\'egalit\'e de Riemann   pour la courbe $C$ 
montre que  l'on a les propri\'et\'es suivantes :

(i)  Tout z\'ero-cycle de $C$ de degr\'e au moins \'egal \`a $g$
est rationnellement \'equivalent \`a un z\'ero-cycle effectif. 
 
(ii) Si $C$ poss\`ede un z\'ero-cycle de degr\'e 1, alors
la courbe $C$ poss\`ede un z\'ero-cycle effectif
de degr\'e $g$ et un z\'ero-cycle effectif de degr\'e $g+1$.

(iii) Si $g\geq 1$ et $C$ poss\`ede un point $k$-rationnel $P_{0}$, le groupe de
 Chow $CH_{0}(X)$ est engendr\'e par les points ferm\'es $P$ de degr\'e 
${\rm deg}_{k}(P) \leq g$. En effet, pour tout z\'ero-cycle $z$, le z\'ero-cycle
$z +(g- {\rm deg}_{k}(z)) P_{0}$ est de degr\'e $g$.

(iv) Si $C$ est de genre $0$ ou $1$ et poss\`ede un z\'ero-cycle de degr\'e 1,
alors $C(k) \neq \emptyset$.

\medskip

On peut se demander dans quelle mesure ces belles propri\'et\'es des z\'ero-cycles sur les courbes
s'\'etendent aux z\'ero-cycles sur les vari\'et\'es de dimension quelconque.

  \medskip
  
Pour  les vari\'et\'es projectives, lisses, connexes sur  un corps alg\'ebriquement clos de degr\'e de transcendance infini sur le corps premier, en particulier $k=\C$ le corps des complexes, une propri\'et\'e comme (i) impose de s\'ev\`eres restrictions \`a la g\'eom\'etrie de la vari\'et\'e consid\'er\'ee.
Ceci a fait l'objet de  travaux  bien connus de   Mumford et  de Bloch. De mani\`ere  g\'en\'erale, 
pour $X/\C$ une telle vari\'et\'e, s'il existe un entier $d(X)>0 $ tel que tout z\'ero-cycle sur $X$ de degr\'e au moins $d(X)$
est rationnellement \'equivalent \`a un z\'ero-cycle effectif, alors pour tout $i \geq 2$ les groupes de cohomologie coh\'erente $H^{i}(X,\O_{X})$ sont nuls (Bloch et Srinivas \cite{BS}). 
  Pour $X/\C$ une surface projective et lisse, c'est une conjecture de  Bloch qu'inversement la condition $H^2(X,\O_{X})=0$
  implique l'existence d'un entier $d=d(X)$  comme ci-dessus. 
  
 Sur un corps $k$ quelconque,  il est alors naturel de se limiter  aux $k$-vari\'et\'es $X$ projectives, lisses, g\'eom\'etriquement connexes    qui v\'erifient  : il existe un entier $d_{geom}(X)$ tel que, sur {\it tout} corps alg\'ebriquement clos $\Omega$ contenant $k$,  tout z\'ero-cycle de degr\'e au moins $d_{geom}(X)$ sur $X \times_{k}\Omega$ est rationnellement \'equivalent \`a un z\'ero-cycle effectif.
 
  Parmi celles-ci, on trouve les vari\'et\'es g\'eom\'etriquement rationnellement connexes,
  au sens de Koll\'ar, Miyaoka et Mori.  Pour ces vari\'et\'es, on a  $A_{0}(X_{\Omega})=0$, et donc  
  $d_{geom}(X)=1$ convient.

Une classe bien \'etudi\'ee de telles vari\'et\'es est celle des mod\`eles projectifs et lisses
d'espaces  homog\`enes de groupes alg\'ebriques lin\'eaires connexes.  En ce qui concerne l'analogue de
la question  (iv)  ci-dessus, \`a savoir si l'existence d'un z\'ero-cycle de degr\'e~1 implique
celle d'un point rationnel, pour les compactifications lisses d'espaces principaux homog\`enes
de groupes alg\'ebriques lin\'eaires connexes, ceci a \'et\'e \'etabli dans de nombreux cas (Serre, Sansuc, Bayer--Lenstra),  mais des contre-exemples pour les espaces homog\`enes g\'en\'eraux ont \'et\'e donn\'es par Florence et  par Parimala.
L'\'enonc\'e historique concerne les quadriques : une quadrique  qui poss\`ede un point dans une extension
de degr\'e impair poss\`ede un point rationnel. Il fut conjectur\'e par  Witt (1937), d\'emontr\'e par
  Artin (non publi\'e, 1937) et  par Springer \cite{Spr}.

 En dimension 2, la classe des vari\'et\'es (s\'eparablement) rationnellement connexes co\"{\i}ncide avec celle des $k$-surfaces g\'eom\'etriquement rationnelles,
pour lesquelles on dispose de la classification $k$-birationnelle de Enriques, Manin, Iskovskikh, Mori :
tout telle surface est $k$-birationnelle soit \`a une surface fibr\'ee en coniques sur une conique, soit
\`a une surface de del Pezzo.
Dans cet article, nous \'etudions syst\'ema\-ti\-quement les \'enonc\'es de type (i), (ii), (iii)  pour les surfaces de del Pezzo.
En combinaison avec l'\'etude des z\'ero-cycles sur les surfaces fibr\'ees en coniques faite avec   Coray \cite{CTC},
ceci \'etablit les th\'eor\`emes suivants, analogues des \'enonc\'es (i), (ii), (iii) ci-dessus pour les courbes.\footnote{
L'analogue de l'\'enonc\'e  (iv) est connu, et rappel\'e dans la d\'emonstration du th\'eor\`eme  \ref{pluspetitpremier} :  pour une $k$-surface $X$ projective, lisse, g\'eom\'etriquement rationnelle avec $(K_{X}.K_{X}) \geq 4$, si $X$ poss\`ede un z\'ero-cycle de degr\'e 1, alors $X$ poss\`ede un point rationnel.}
 
\medskip

{\bf Th\'eor\`eme A} (Th\'eor\`eme \ref{pluspetitpremier})
 {\it Soit  $X$ une $k$-surface projective, lisse, g\'eom\'e\-tri\-quement rationnelle,
sur un corps $k$ de caract\'eristique z\'ero.  Soit $K_{X}$
 la classe canonique de $X$. Soit
  $$N(X) = {\rm max}(10, \lfloor{4- (K_{X}.K_{X})/2) }\rfloor    ).$$
Si $X$ poss\`ede un z\'ero-cycle de degr\'e 1, alors $X$
poss\`ede des points ferm\'es dont les  degr\'es sont  inf\'erieurs ou \'egaux \`a $N(X)$
et sont premiers entre eux  dans leur ensemble.}

\medskip

{\bf Th\'eor\`eme B} (Th\'eor\`eme  \ref{touseffectifs})
{\it Soit  $X$ une $k$-surface projective, lisse, g\'eom\'e\-tri\-quement rationnelle,
sur un corps $k$ de caract\'eristique z\'ero. Soit $K_{X}$
 la classe canonique de $X$.
Supposons que $X$  poss\`ede un point $k$-rationnel.
Soit $$M(X)= {\rm max}(904, \lfloor{3- (K_{X}.K_{X})/2)}\rfloor ).$$
Tout
 z\'ero-cycle de degr\'e au moins $M(X)$ est rationnellement
 \'equivalent \`a un z\'ero-cycle effectif. En particulier, le groupe de Chow des z\'ero-cycles est engendr\'e 
 par les points ferm\'es de degr\'e au plus  $M(X)$.
}
 
 \medskip
 
 Ceci pose deux  questions :
 
 (1) Peut-on \'etablir ces \'enonc\'es, avec des entiers $N(X)$ et $M(X)$ ne d\'ependant que de la g\'eom\'etrie de $X$
 sur une cl\^{o}ture alg\'ebrique du corps de base $k$,
 sans utiliser la classification $k$-birationnelle des surfaces
 g\'eom\'etri\-que\-ment rationnelles  et une analyse cas par cas ?
 
 (2) A-t-on des analogues de ces    \'enonc\'es  pour les vari\'et\'es rationnellement connexes
 de dimension sup\'erieure ?

\medskip

Le point de d\'epart de cet article est la th\`ese  de Daniel Coray (Cambridge, UK, 1974) \cite{C1}.
Daniel Coray  y montra que
 si une surface cubique lisse $X$ d\'efinie sur un corps $k$ parfait
 poss\`ede un point rationnel dans une extension finie de corps $K/k$
 de degr\'e premier \`a 3, alors elle poss\`ede un point rationnel
 dans une extension de corps $K/k$ de degr\'e soit 1, soit 4, soit 10.
  Voici le principe de sa d\'emonstration. On consid\`ere un point ferm\'e
$P$ de degr\'e premier \`a 3 aussi petit que possible, on fait passer par ce point et par
un point de degr\'e 3 une surface de $\P^3_{k}$  de degr\'e aussi petit
que possible, pour que le genre arithm\'etique $p_{a}$ de la courbe intersection $\Gamma$ 
soit aussi petit que posssible.
Si cette courbe est lisse et g\'eom\'etriquement connexe
de genre $g=p_{a}$,
on applique le th\'eor\`eme de Riemann-Roch sur la courbe $\Gamma$ \`a un z\'ero-cycle de degr\'e 
au moins \'egal \`a $g$, premier \`a $3$, et de degr\'e aussi petit que possible.
Dans les bons cas, on \'etablit ainsi l'existence sur $\Gamma$ 
et donc sur $X$ d'un z\'ero-cycle effectif, et donc d'un point ferm\'e, de degr\'e premier \`a 3
 plus petit que celui que l'on avait au d\'ebut, et on recommence le proc\'ed\'e.
 Le processus a ses limites : on n'arrive pas \`a r\'esoudre les cas $4$ et $10$,
 dont la possibilit\'e \`a ce jour n'est pas exclue.
 
 La m\'ethode fut ensuite appliqu\'ee par Coray    
  \cite{C2}  aux surfaces de del Pezzo de degr\'e  4, 
   et une variante fut appliqu\'ee par Coray et moi \cite{CTC} aux surfaces fibr\'ees en coniques
  sur la droite projective.
 
Une  difficult\'e technique dans ces articles est que les courbes obtenues
dans un syst\`eme lin\'eaire donn\'e  ne sont pas a priori lisses ni m\^eme
g\'eom\'etri\-que\-ment irr\'eductibles : on doit donc consid\'erer et discuter
 les  d\'eg\'en\'erescences possibles.
 
 \medskip

Voici maintenant le contenu d\'etaill\'e de l'article.

  Au \S \ref{bertinifertile},  on donne un argument nouveau et g\'en\'eral, combinant th\'eor\`eme de Bertini, 
  d\'efor\-ma\-tion  et sp\'ecialisation, qui dans ce type d'argument  permet
     de ne consid\'erer que le cas des courbes lisses.
La souplesse obtenue nous permet de d\'evelopper l'argument de Coray dans plusieurs
directions.\footnote{B. Poonen m'a   tr\`es r\'ecemment fait remarquer que l'utilisation des corps fertiles
et en particulier des corps de s\'eries formelles $k((t))$ pourrait souvent
\^etre remplac\'ee par une utilisation du lemme de Lang-Nishimura (\cite[Lemme 3.1.1]{CTCS}, \cite[Thm. 3.6.11]{Poo}) comme  c'est fait dans
 \cite[Lemma 9.4.8]{Poo}. Les deux types d'arguments sont en fait tr\`es proches.}

Au \S \ref{cubique1} on reprend l'argument de Coray  \cite{C1} pour les surfaces cubiques.
 
Au \S \ref{dp3pointrat}, on montre que si une surface cubique lisse poss\`ede 
un point rationnel, alors le groupe de Chow des z\'ero-cycles est engendr\'e par les 
points rationnels et les points ferm\'es de degr\'e 3, et que tout z\'ero-cycle de degr\'e
au moins 10 est rationnellement \'equivalent \`a un z\'ero-cycle effectif.
  
Au \S \ref{137},     on \'etablit pour les surfaces de del Pezzo de degr\'e 2 
l'analogue   du r\'esultat de Coray pour les surfaces de del Pezzo de degr\'e 2.
 On montre ici que s'il y a un point dans une extension finie de corps de degr\'e impair,
 alors il y a un point dans une extension de degr\'e 1, 3 ou 7.  Le degr\'e minimal impair 3 ne peut \^etre exclu 
 (\cite{KM}, voir la remarque \ref{contrexKM}).
 
 Au \S \ref{dp2pointrat}, on montre que si une surface de del Pezzo de degr\'e 2  poss\`ede 
un point rationnel, alors tout z\'ero-cycle de degr\'e 0 est rationnellement \'equivalent \`a la diff\'erence de
deux z\'ero-cycles effectifs de degr\'e 6, et que tout z\'ero-cycle de degr\'e au moins 43 est 
rationnellement \'equivalent \`a un z\'ero-cycle effectif.

 Au \S \ref{dp1pointrat}, on montre que sur une surface de del Pezzo de degr\'e 1   tout z\'ero-cycle de degr\'e 0 est rationnellement \'equivalent \`a la diff\'erence de
deux z\'ero-cycles  effectifs de degr\'e 21, et que tout z\'ero-cycle de degr\'e au moins 904 est 
rationnellement \'equivalent \`a un z\'ero-cycle effectif.
  
Au \S \ref{total}, 
on note que ces divers r\'esultats, combin\'es avec   \cite{CTC},
 ach\`event la d\'emons\-tration  des th\'eor\`emes A et B mentionn\'es ci-dessus.
 
  \medskip

Au \S \ref{cubique2}, logiquement ind\'ependant du reste de l'article,
on revient aux surfaces cubiques.
On s'int\'eresse \`a une question soulev\'ee par Qixiao Ma \cite{QM} :
sur une surface cubique lisse sans point rationnel, existe-t-il un point
ferm\'e de degr\'e 3 non d\'ecoup\'e par une droite ? On relie  ce probl\`eme
\`a la question (ouverte) de la densit\'e des points rationnels sur les  surfaces de del Pezzo
de degr\'e 1.

Pour ne pas alourdir ce texte, on se limite aux corps de caract\'eristique nulle.
On laisse au lecteur le soin de voir ce qui subsiste sur un corps quelconque.
Des r\'esultats dans cette direction sont obtenus dans \cite{C1} et \cite{QM}.
 
On utilise librement dans cet article la th\'eorie de l'intersection sur les surfaces projectives lisses
\cite{Se, Mu}, 
 la th\'eorie des  surfaces cubiques et plus
g\'en\'eralement des surfaces de del Pezzo comme on peut la trouver dans   les livres 
  \cite{Ma} et  \cite{Kol}, dans \cite{Ma66} et \cite{I}, et dans le  rapport \cite{VA}.
  On utilise aussi des r\'esultats 
 sur  les vari\'et\'es rationnellement connexes, \'etablis par   les techniques de d\'eformation
 de Koll\'ar, Miyaoka et Mori  \cite{Kol, KoAM}.
 
 \medskip
 
 Daniel Coray, qui fut professeur \`a l'Universit\'e de Gen\`eve,  et fut aussi
 directeur de publication de la revue l'Enseignement Math\'ematique,
 nous a quitt\'es en 2015. 
 C'\'etait un esprit fin et original. 
 Les d\'emons\-trations de l'article \cite{CTCS}, o\`u un substitut du principe de Hasse
 fut \'etabli pour la premi\`ere fois pour une classe de vari\'et\'es ne se ramenant pas
 par transformations birationnelles
\`a des espaces homog\`enes de groupes alg\'ebriques lin\'eaires, en gardent la trace.
 \`A ce sujet on pourra aussi consulter ses {\it Notes de G\'eom\'etrie et
 d'Arithm\'etique}, r\'ecemment traduites \cite{C3}.  
  Je suis heureux de pouvoir d\'edier cet article \`a sa m\'emoire.

\section{Bertini et corps fertiles}\label{bertinifertile}

\subsection{Corps fertiles et $R$-densit\'e}

 Un corps $F$ est dit fertile (terminologie due \`a Moret-Bailly; en anglais on dit \og large field \fg)  s'il satisfait la propri\'et\'e suivante :
si une $F$-vari\'et\'e  $X$ int\`egre poss\`ede un $F$-point lisse, alors
l'ensemble $X(F)$ de ses points $F$-rationnels est dense dans $X$
pour la topologie de Zariski.  On consultera \cite{P} pour un rapport r\'ecent sur le sujet.
Une extension finie d'un corps fertile est fertile.
Pour tout corps~$k$, le corps  $F=k((t))$ des s\'eries formelles sur $k$ est fertile. 

 Soient $k$ un corps et $X$ une $k$-vari\'et\'e propre.
  La relation de $R$-\'equivalence sur $X(k)$ est par d\'efinition
  engendr\'ee par la relation \'el\'ementaire suivante : deux $k$-points
  $A,B \in X(k)$ sont \'el\'ementairement li\'es s'il existe un $k$-morphisme
  $f: \P^1_{k} \to X$ tel que $A$ et $B$ sont dans $f(\P^1(k))$. 
  Si  deux  $k$-points  $P$ et $Q$ sont $R$-\'equivalents, alors le z\'ero-cycle $P-Q$
  est rationnellement \'equivalent \`a z\'ero sur $X$.
 
 Soient $k$ un corps de caract\'eristique z\'ero et  $X$
 une $k$-vari\'et\'e  projective, lisse, g\'eom\'etriquement connexe. 
 On dira dans cet article que la $k$-vari\'et\'e $X$ satisfait la {\it propri\'et\'e de densit\'e} si,
 pour toute extension finie de corps $L/k$ telle que $X(L)\neq \emptyset$,
 l'ensemble $X(L)$ est  dense dans $X_{L}$  pour la topologie de Zariski.
 Sur un corps $k$ fertile, toute  $k$-vari\'et\'e lisse g\'eom\'etriquement connexe 
 $X$ satisfait la propri\'et\'e de densit\'e.
 
 Une hypersurface cubique lisse dans $\P^n_{k}$ pour $n \geq 3$ est 
$k$-unirationnelle d\`es qu'elle a un point rationnel  (voir \cite{Ko02}).
Elle satisfait donc la propri\'et\'e de densit\'e.
 
  On dira que  la $k$-vari\'et\'e $X$ satisfait la {\it propri\'et\'e de   $R$-densit\'e} si,
 pour toute extension finie de corps $L/k$ et tout $P \in X(L)$,
les points   $Q \in X(L)$ qui sont $R$-\'equivalents \`a $P$ sont
denses dans $X_{L}$ pour la topologie de Zariski.
Donnons deux classes de telles vari\'et\'es.

\begin{prop}\label{cubiqueRdense} Soit $k$ un corps de caract\'eristique z\'ero.
Toute  $k$-hypersurface cubique lisse $X$ dans $\P^n_{k}$,
avec $n \geq 3$,  satisfait la propri\'et\'e de $R$-densit\'e.
\end{prop}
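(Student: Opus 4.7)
Le plan est d'exploiter le fait que $X_L$ est $L$-unirationnelle d\`es qu'elle poss\`ede un $L$-point, comme rappel\'e juste avant l'\'enonc\'e \cite{Ko02}, et plus pr\'ecis\'ement d'utiliser une param\'etrisation rationnelle bas\'ee au point $P$ lui-m\^eme. Appliquant la construction de Koll\'ar au point lisse $P$, on obtiendrait un $L$-morphisme rationnel dominant $\phi : \P^N_L \dashrightarrow X_L$, d\'efini sur un ouvert $U \subset \P^N_L$, et un $L$-point $y_0 \in U(L)$ v\'erifiant $\phi(y_0) = P$.

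Pour tout $y \in U(L)$, on consid\`ererait alors la droite $\ell_y \subset \P^N_L$ joignant $y_0$ et $y$; elle rencontre $U$ en un ouvert dense non vide de $\ell_y \simeq \P^1_L$. Comme $X$ est projective, la restriction de $\phi$ \`a $\ell_y \cap U$ s'\'etendrait de fa\c{c}on unique en un $L$-morphisme $\tilde\phi_y : \ell_y \to X$, envoyant $y_0$ sur $P$ et $y$ sur $\phi(y)$. Ceci t\'emoignerait que $\phi(y)$ est $R$-\'equivalent \`a $P$ sur $X$, de sorte que $\phi(U(L))$ se trouverait enti\`erement contenu dans la classe de $R$-\'equivalence de $P$.

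Il resterait \`a v\'erifier que $\phi(U(L))$ est dense dans $X_L$ pour la topologie de Zariski. Le corps $L$ \'etant infini (caract\'eristique nulle), les $L$-points sont denses dans tout ouvert non vide de $\P^N_L$, et la dominance de $\phi$ impliquerait alors que tout ouvert non vide $V \subset X_L$ rencontre $\phi(U(L))$~: l'ouvert $\phi^{-1}(V) \cap U$ de $\P^N_L$ est non vide, donc contient un $L$-point dont l'image par $\phi$ appartient \`a $V$. La principale subtilit\'e se situe en amont, dans la v\'erification que la param\'etrisation d'unirationalit\'e puisse effectivement \^etre bas\'ee au $L$-point prescrit $P$; cela d\'ecoule n\'eanmoins directement du mode de construction de Koll\'ar, qui prend pr\'ecis\'ement en entr\'ee un point lisse $L$-rationnel donn\'e de $X$.
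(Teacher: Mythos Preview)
Your argument is correct, and the final mechanism --- lines in $\P^N$ through $y_0$ push forward to rational curves in $X$ through $P$, hence $\phi(U(L))$ lies in the $R$-equivalence class of $P$ and is Zariski-dense --- is exactly the one the paper uses. The difference lies in how one obtains a dominant parametrization $\phi$ with $P$ actually in $\phi(U(L))$.

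You assert that Koll\'ar's unirationality construction \cite{Ko02} is \emph{based at} the given point $P$, i.e.\ that $P$ lies in the image of the locus where $\phi$ is a morphism. This is true, but it is not a formality: the construction starts from $P$ (via the tangent hyperplane section), yet one must still check that $P$ is hit by the resulting two-parameter family and not merely a limit point where the map degenerates. A short verification (e.g.\ finding $Q$ on the nodal curve $T_PX\cap X$ with $P\in T_QX$) confirms it, but your phrase ``d\'ecoule directement'' hides a genuine step.

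The paper bypasses this entirely with a trick specific to cubics. It takes \emph{any} dominant $f:V\to X$ from unirationality, picks $A\in f(V(k))$ so that the line $\overline{AP}$ meets $X$ in three distinct rational points $A,B,P$, and composes $f$ with the birational involution $t_B$ (sending $M$ to the residual intersection of $\overline{BM}$ with $X$). Since $t_B(A)=P$, the composite $g=t_B\circ f$ now has $P$ in its image on the nose. This buys a fully self-contained argument that uses nothing about the internal structure of the unirational parametrization --- only its existence and the secant-line geometry of the cubic. Your approach, by contrast, would apply verbatim to any variety admitting a unirational parametrization through a prescribed point, at the cost of verifying that hypothesis case by case.
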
 
  \begin{proof} On se ram\`ene au cas $X(k) \neq \emptyset$.
  Comme $X$ est alors $k$-unirationnelle   \cite{Ko02}, il existe un ouvert non vide  $V \subset \P^{n-1}_{k}$
et un $k$-morphisme g\'en\'eriquement fini dominant $f : V \to X$. Comme $f(V(k))$ est dense dans $X$ pour la topologie de Zariski, il existe $A \in f(V(k))$ distinct de $P$ tel que la droite par $A$ et $P$ d\'ecoupe exactement trois
points rationnels distincts, $A,B,P$ sur $X$. La sym\'etrie $t_{B}$ par rapport \`a $B$
est bien d\'efinie en $A$ et satisfait $t_{B}(A)=P$. Quitte \`a remplacer $V$ par un ouvert non vide $W$,
$t_{B} \circ f  $ d\'efinit   un $k$-morphisme
$g : W \to X$ qui est dominant et satisfait $P \in g(W(k))$.   Comme $W$ est un ouvert de $\P^{n-1}_{k}$,
tout  point de    $g(W(k)) \subset X(k)$  est $R$-\'equivalent \`a $P$  sur $X$.
\end{proof}
  
  L'\'enonc\'e suivant est   d\^{u} \`a J. Koll\'ar \cite[Thm. 1.4, Cor. 1.5, Rem. 1.10]{KoAM}.
  
 \begin{thm}\label{ratconnexeRdense}  Soit $k$ un corps fertile de caract\'eristique z\'ero.
 Si  $X$  est  une $k$-vari\'et\'e projective et lisse g\'eom\'etriquement rationnellement connexe,
alors elle satisfait la propri\'et\'e de $R$-densit\'e.
 \end{thm}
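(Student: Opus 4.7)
The plan is to follow Kollár's strategy, in which the large field hypothesis is used twice: once to construct a $k$-rational very free curve through a given rational point, and once to deform it in a Zariski dense way. First I reduce to the case $L=k$, since any finite extension of a fertile field is fertile. Fix $P \in X(k)$; the goal is to show that the $R$-equivalence class of $P$ in $X(k)$ is Zariski dense in $X$.

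The first step is to produce a very free morphism $f : \mathbb{P}^1_k \to X$ with $f(0)=P$, i.e.\ one with $f^*T_X$ ample. Since $X$ is smooth, projective, and geometrically rationally connected in characteristic zero, the theory of Koll\'ar--Miyaoka--Mori supplies very free morphisms over $\overline{k}$. To descend and pass through $P$ one uses a comb construction: attach many free rational teeth to a trivial handle at $P$ over $\overline{k}$, and smooth out; the space of such smoothings is a smooth $k$-scheme whose $\overline{k}$-points are Zariski dense, and the fertility of $k$ yields a $k$-point. This is the main technical obstacle, and is precisely the content of Koll\'ar's paper \cite{KoAM}.

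Once $f$ is in hand, consider the $k$-scheme $M = \operatorname{Mor}(\mathbb{P}^1_k, X; 0 \mapsto P)$. Deformation theory gives the obstruction space $H^1(\mathbb{P}^1, f^*T_X(-[0]))$; since $f$ is very free, $f^*T_X = \bigoplus \mathcal{O}(a_i)$ with $a_i \geq 1$, so $f^*T_X(-[0]) = \bigoplus \mathcal{O}(a_i-1)$ has vanishing $H^1$. Hence $M$ is smooth at $[f]$, so in particular it has a smooth $k$-point. Introduce the evaluation morphism
\[
\mathrm{ev} : M \times (\mathbb{P}^1_k \setminus \{0\}) \longrightarrow X, \qquad ([g],t) \longmapsto g(t).
\]
At $([f],1)$ this map is smooth (because $f$ is free) and dominant (because $f$ is very free), so the source is a smooth integral $k$-variety with a smooth $k$-point and $\mathrm{ev}$ is generically smooth.

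Now I invoke fertility of $k$: the $k$-points of $M \times (\mathbb{P}^1_k \setminus \{0\})$ are Zariski dense on the irreducible component containing $([f],1)$. Since $\mathrm{ev}$ is dominant on that component, the image of these $k$-points under $\mathrm{ev}$ is Zariski dense in $X$. But for any $([g],t) \in M(k) \times (k \setminus \{0\})$, the morphism $g : \mathbb{P}^1_k \to X$ is a rational curve joining $g(0)=P$ to $g(t)$, so $g(t)$ is $R$-equivalent to $P$ in $X(k)$. This establishes Zariski density of the $R$-equivalence class of $P$, completing the proof. The hard step, as noted, is the production of the very free $k$-curve through $P$; once that is granted, smoothness of the moduli space and fertility of $k$ combine formally to give density.
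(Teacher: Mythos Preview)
Your proof is correct and follows essentially the same strategy as the paper, which likewise invokes Koll\'ar \cite{KoAM} to produce a very free $k$-rational curve through $P$, then uses smoothness of the evaluation map from the pointed Hom scheme together with fertility of $k$. The only cosmetic difference is that the paper pins the deformations at a second point $Q=f(B)\neq P$ (so all deformed curves pass through $Q$, and one concludes via transitivity of $R$-equivalence through $Q$), whereas you pin them at $P$ itself, which is slightly more direct.
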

 \begin{proof} Soient $k$ un corps fertile de caract\'eristique z\'ero et $X$ une 
 $k$-vari\'et\'e projective et lisse g\'eom\'etriquement rationnellement connexe.
 Soit $P \in X(k)$. 
 Koll\'ar montre d'abord qu'il existe un $k$-morphisme $f: \P^1_{k}  \to X$
 tel que le fibr\'e vectoriel $f^*T_{X}$ soit ample sur $\P^1_{k}$ 
 et que l'on ait deux $k$-points $A,B$  de $\P^1_{k}$
 avec $f(A)=P$ et $Q:=f(B)\neq P$.

 Il montre ensuite (point 4.2 de \cite{KoAM}) qu'il existe
 une $k$-vari\'et\'e $V$ qui est un ouvert du sch\'ema ${\rm Hom}(\P^1_{k}, X, B \mapsto Q)$
 tel que le morphisme d'\'evaluation 
 $$W=( \P^1_{k}  \setminus B) \times_{k} V \to  X$$
 donn\'e  par $(t, g) \mapsto  g(t)$ soit lisse. 
 
 Le $k$-point  $P$ est  dans l'image de $W(k)$ par cette application,
 et tous les $k$-points de $f(W(k))$ sont $R$-\'equivalents sur $X$, 
 via   cette application, via le point $Q=g(B)$.
 \end{proof}
 
   Etant donn\'es une  vari\'et\'e quasiprojective lisse int\`egre $X$
  sur le corps $k$ (de caract\'eristique z\'ero)
  et un entier naturel $m \geq 1$, on note ${\rm Sym}^mX$ le quotient de
  $X^m$ par l'action du groupe sym\'etrique $\frak{S}_{m}$.
  Il y a une bijection naturelle entre les $k$-points de ${\rm Sym}^mX$
  et les z\'ero-cycles effectifs sur $X$ de degr\'e $m$.
   Le  groupe $\frak{S}_{m}$ agit librement sur le compl\'ementaire
   dans $X^m$ des diagonales partielles. Le quotient 
   de ce compl\'ementaire par  $\frak{S}_{m}$ est un ouvert
   lisse de ${\rm Sym}^mX$ qu'on notera ${\rm Sym}^m_{sep}X$.
Les $k$-points de  ${\rm Sym}^m_{sep}X$ correspondent aux z\'ero-cycles effectifs de la forme
   $\sum_{j} P_{j}$ avec $P_{j}$ des points ferm\'es distincts sur $X$
   (un tel z\'ero-cycle effectif, sans multiplicit\'es, sera appel\'e s\'eparable) dont les corps
   r\'esiduels $k(P_{j})$
   satisfont  $\sum_{j} [k(P_{j}):k]= m$.
   On trouvera une \'etude g\'en\'erale d\'etaill\'ee de cette correspondance
   dans \cite{Ry}.

\begin{prop}\label{Rdensecycles}
Soit $k$ un corps de caract\'eristique z\'ero. Soit $X$
 une $k$-vari\'et\'e projective et lisse, g\'eom\'etriquement connexe.
 Soient $P_{1}, \dots, P_{t}$ des points ferm\'es de degr\'es respectifs
 $s_{1}, \dots, s_{t}$  sur $k$ et soit 
  $z=P_{1}+ \dots + P_{t}$
 le z\'ero-cycle associ\'e sur $X$, qui correspond aussi \`a un $k$-point de
 $W={\rm Sym}^{s_{1}}_{sep} X \times \dots \times {\rm Sym}^{s_{t}}_{sep}X$.
On consid\`ere l'ensemble ${\mathcal E}$ des $k$-points de $W$, de z\'ero-cycle associ\'e
$z_{1}+ \dots+ z_{t}$ tels que  pour chaque $i$  le z\'ero-cycle effectif $z_{i}$, de degr\'e $s_{i}$,
soit  rationnellement \'equivalent \`a $P_{i}$
sur $X$. Si $X$ satisfait la propri\'et\'e de $R$-densit\'e, alors  ${\mathcal E} \subset W(k)$ est dense dans $W$ pour la topologie de Zariski.
 \end{prop}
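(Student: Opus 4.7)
The plan is to reduce to the case of a single factor and then to transfer the $R$-density hypothesis via Weil restriction of scalars. Since $W=\prod_{i=1}^t W_i$ with $W_i:={\rm Sym}^{s_i}_{sep}X$, and $\mathcal E$ is the product of the sets $\mathcal E_i\subset W_i(k)$ of $k$-points corresponding to effective zero-cycles of degree $s_i$ rationally equivalent to $P_i$ on $X$, and since a product of Zariski-dense subsets is Zariski-dense in the product, it suffices to prove that each $\mathcal E_i$ is Zariski-dense in $W_i$.

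Fix $i$ and write $P=P_i$, $s=s_i$, $L=k(P_i)$, with $\tilde P\in X(L)$ the associated $L$-point. Introduce the Weil restriction $Y:={\rm Res}_{L/k}(X_L)$, a smooth $k$-variety of dimension $s\cdot\dim X$ with $Y(k)=X(L)$, and let $Y^{\circ}\subset Y$ be the open subvariety parameterising $L$-points of $X$ whose residue field is exactly $L$. There is a natural $k$-morphism $\phi:Y^{\circ}\to W_i$ sending an $L$-point to its Galois orbit on $X$; since $Y$ and $W_i$ are irreducible of the same dimension $s\dim X$ and $\phi$ is étale and dominant, its image is an open dense subset of $W_i$. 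By the adjunction $\mathrm{Hom}_k(\P^1_k,Y)=\mathrm{Hom}_L(\P^1_L,X_L)$, combined with the transitivity of $\mathrm{PGL}_2(L)$ on pairs of distinct $L$-points of $\P^1_L$ (allowing one to bring the two endpoints to $0,\infty\in\P^1(k)$), the $R$-equivalence classes of $\tilde P$ in $X(L)$ and in $Y(k)$ coincide as subsets of $X(L)=Y(k)$. Moreover, for any $\tilde Q\in Y^{\circ}(k)$ in this common $R$-class, proper push-forward of the rational equivalence along the finite morphism $X_L\to X$ yields $\phi(\tilde Q)\sim P_i$ in $CH_0(X)$, so $\phi(\tilde Q)\in\mathcal E_i$.

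It remains to show that the $R$-equivalence class $\Sigma$ of $\tilde P$ in $Y(k)$ is Zariski-dense in $Y$ as a $k$-scheme; then the étale morphism $\phi$, which is in particular open, transfers this density to $\mathcal E_i$ inside $W_i$. By the $R$-density hypothesis applied to $X_L$ at $\tilde P$, the set $\Sigma$ is Zariski-dense in $X_L$. To promote this to density in $Y$, the plan is to study the evaluation morphism $\mathrm{ev}:H\times\P^1_L\to X_L$ from the Hom-scheme $H:=\mathrm{Hom}_L(\P^1_L,X_L;\;0\mapsto\tilde P)$, which is dominant by $R$-density, and to Weil-restrict it to obtain a dominant $k$-morphism $H'\times\P^1_k\to Y$ with $H':={\rm Res}_{L/k}H$, whose image on $k$-points is precisely $\Sigma$. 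The main obstacle will be this last density step: the Zariski topology on $X(L)=Y(k)$ induced from $X_L$ as $L$-scheme and that induced from $Y$ as $k$-scheme are incomparable in general (a subset may be dense in $X_L$ without being dense in $Y$), so density in the former does not automatically give density in the latter. One overcomes this by applying the $R$-density hypothesis not merely at $\tilde P$ but also at moving base points, producing enough families of $k$-rational curves in $Y$ through $\tilde P$ (equivalently, of $L$-rational curves in $X_L$ with $k$-reparameterised endpoints) for their collective $k$-point evaluations to cover a Zariski-dense subset of $Y$.
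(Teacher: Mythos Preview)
Your overall strategy coincides with the paper's: reduce to a single factor, then transport $R$-equivalence from $X(L)$ to zero-cycles on $X$ via the push-forward along $X_L\to X$. The paper does this in two lines; your Weil-restriction formulation with $Y=\mathrm{Res}_{L/k}(X_L)$ and $\phi:Y^\circ\to\mathrm{Sym}^s_{sep}X$ is simply a more explicit packaging of the same map. You are right that the passage from ``$\Sigma$ dense in $X_L$'' to ``image dense in $\mathrm{Sym}^s X$'' is not automatic: the Zariski topology on $X(L)=Y(k)$ coming from $X_L$ is strictly coarser than the one coming from $Y$, and a subset dense for the former need not be dense for the latter (e.g.\ $\mathbb{P}^1(\mathbb{Q})\subset\mathbb{P}^1(\mathbb{C})$ is dense in $\mathbb{P}^1_{\mathbb{C}}$ but lands in a curve inside $\mathrm{Res}_{\mathbb{C}/\mathbb{R}}\mathbb{P}^1_{\mathbb{C}}$). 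The paper's ``Ceci implique'' elides exactly this point.

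That said, your proposed repair does not close the gap. First, the claim that the evaluation map $\mathrm{ev}:H\times\mathbb{P}^1_L\to X_L$ is dominant ``by $R$-density'' is unjustified: $R$-density concerns chains of rational curves, and the single-step locus $\mathrm{ev}(H(L)\times\mathbb{P}^1(L))$ can sit in a proper subvariety even when the full $R$-class is dense. Second, Weil-restricting $\mathrm{ev}$ yields a map from $H'\times\mathrm{Res}_{L/k}(\mathbb{P}^1_L)$, not from $H'\times\mathbb{P}^1_k$; and even granting dominance, you still need the $k$-points of the source to be Zariski-dense in the source, which the abstract $R$-density hypothesis on $X$ does not provide. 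The vague appeal to ``moving base points'' does not supply this.

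A clean way to fill the gap, consonant with how the proposition is actually used later in the paper, is to observe that in every application one may assume $k$ fertile and $X$ geometrically rationally connected (via Proposition~\ref{reducfertile}); then $Y=\mathrm{Res}_{L/k}(X_L)$ is itself smooth and geometrically rationally connected (since $Y_{\bar k}\simeq X_{\bar k}^{\,s}$), so Koll\'ar's theorem applied directly to $Y$ gives that the $R$-class of $\tilde P$ in $Y(k)$ is Zariski-dense in $Y$, whence its image under the dominant \'etale map $\phi$ is dense in $\mathrm{Sym}^s_{sep}X$. Under the bare $R$-density hypothesis on $X$ alone, without further input, neither the paper's argument nor yours is complete.
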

 
 \begin{proof} Il suffit de le montrer dans le cas $t=1$. Soit donc $s>0$ un entier et
 $P$ un point ferm\'e de degr\'e $s$ sur $X$, de corps r\'esiduel $L$. 
 Le point $P$ d\'efinit un point de $X(L)$.
  Soit $\overline k$ une cl\^oture alg\'ebrique de $k$.
La projection $\pi : X_{L}  \to X$ induit une application $\pi_{*}$ de $X(L)$ dans l'ensemble
des cycles effectifs de degr\'e $s$ sur $X$.   Cette application peut aussi \^etre d\'ecrite
de la mani\`ere suivante.
Soit $P \in X(L)$ et $\{(P_{1}, \dots, P_{s})\}$ l'ensemble de ses images dans $X^s(\overline{k})$
par les divers plongement de 
$L$ dans $\overline{k}$.
L'image de 
 $(P_{1}, \dots, P_{s}) \in X^s({\overline k})$ 
dans  ${\rm Sym}^s X({\overline k})$
est invariante sous l'action du groupe de Galois ${\rm Gal}({\overline k}/k)$. Ceci d\'efinit donc une application
$X(L) \to  {\rm Sym}^s X(k)$, ensemble qui co\"{\i}ncide avec l'ensemble des z\'ero-cycles effectifs
de degr\'e $s$ sur $X$.

Si deux points $P,Q$ de $X(L)$ sont
$R$-\'equivalents sur $X_{L}$, alors les z\'ero-cycles $\pi_{*}(P)$ et $\pi_{*}(Q)$
sont rationnellement \'equivalents sur $X$.
Sous l'hypoth\`ese que  $X$ satisfait la propri\'et\'e de $R$-densit\'e,
l'ensemble des points de $X(L)$  qui sont $R$-\'equivalents \`a $P$ sur $X_{L}$ est dense dans $X_{L}$
pour la topologie de Zariski. Ceci implique 
que l'ensemble ${\mathcal E}$ des $k$-points de $ {\rm Sym}^s X(k)$  correspondant \`a des 
z\'ero-cycles effectifs rationnellement \'equivalents \`a $P$, vus comme z\'ero-cycles de degr\'e $s$
sur $X$, est dense dans  $ {\rm Sym}^s X$ pour la topologie de Zariski.
 \end{proof}
 
Pour la  propri\'et\'e plus faible de densit\'e, on a le r\'esultat suivant, dont la
d\'emons\-tration est identique \`a celle de la proposition \ref{Rdensecycles}.
 
   \begin{prop}\label{densecycles}
Soit $k$ un corps de caract\'eristique z\'ero. Soit $X$
 une $k$-vari\'et\'e projective et lisse, g\'eom\'etriquement connexe.
 Soient $P_{1}, \dots, P_{t}$ des points ferm\'es de degr\'es respectifs
 $s_{1}, \dots, s_{t}$  sur $k$ et soit 
  $z=P_{1}+ \dots + P_{t}$
 le z\'ero-cycle associ\'e sur $X$, qui correspond aussi \`a un $k$-point de
 $W={\rm Sym}^{s_{1}}_{sep} X \times \dots \times {\rm Sym}^{s_{t}}_{sep}X$.
 Si $X$ satisfait la propri\'et\'e de densit\'e, 
  l'ensemble ${\mathcal E}$ des $k$-points de $W$, de z\'ero-cycle associ\'e
$z_{1}+ \dots+ z_{t}$  avec les $z_{i}$ z\'ero-cycles effectifs de degr\'e $s_{i}$,
est dense dans $W$ pour la topologie de Zariski.
 \end{prop}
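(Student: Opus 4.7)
The plan is to mimic verbatim the proof of Proposition \ref{Rdensecycles}. The key observation is that the $R$-density hypothesis there was used only to provide density in $X_L$ of a specific subset of $X(L)$ (namely the $R$-equivalence class of a chosen point $P$), and the rational equivalence condition on the image cycles in the conclusion was a bookkeeping by-product of working with such a class. Here, the weaker density hypothesis gives density of \emph{all} of $X(L)$ in $X_L$, so the same density-propagation argument produces density of the full image in $\operatorname{Sym}^s_{sep} X$, but one loses all control on the rational equivalence class --- which is exactly why the statement of Proposition \ref{densecycles} drops the rational equivalence condition on $\mathcal{E}$.

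Concretely, I would first reduce to the case $t=1$ by observing that a product of Zariski-dense subsets is Zariski-dense in the product of the ambient varieties, so density of the constructed image in each factor $\operatorname{Sym}^{s_i}_{sep} X$ suffices. For a single closed point $P$ of degree $s$ with residue field $L = k(P)$, the point $P$ comes from an $L$-rational point $\tilde P \in X(L)$. The Galois-orbit construction defines, just as in the previous proof, a map $X(L) \to \operatorname{Sym}^s_{sep} X(k)$ sending $Q \in X(L)$ to the sum of its conjugates under the $s$ embeddings $L \hookrightarrow \bar k$. The density hypothesis on $X$ yields that $X(L)$ is Zariski dense in $X_L$, and the same formal manipulation as in the previous proposition transfers this to Zariski density of the image in $\operatorname{Sym}^s_{sep} X$.

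The only real content of the argument --- and therefore the step I expect to demand care --- is the transfer of density from the $L$-variety $X_L$ (of dimension $\dim X$) to the $k$-variety $\operatorname{Sym}^s_{sep}X$ (of dimension $s\cdot\dim X$), via the dominant $k$-morphism $\operatorname{Res}_{L/k} X_L \to \operatorname{Sym}^s X$ underlying the Galois-orbit map. Since this point is handled already in Proposition \ref{Rdensecycles} and the passage is formally identical here (with the $R$-equivalence class replaced by all of $X(L)$), no new obstacle arises.
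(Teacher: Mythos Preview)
Your proposal is correct and matches the paper's approach exactly: the paper's entire proof of Proposition~\ref{densecycles} is the single sentence that the demonstration is identical to that of Proposition~\ref{Rdensecycles}. Your explanation of why the $R$-density hypothesis can be relaxed to mere density (at the cost of dropping the rational-equivalence conclusion) is precisely the point, and your reduction to $t=1$ followed by the Galois-orbit map $X(L)\to \operatorname{Sym}^s X(k)$ reproduces the paper's argument.
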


\subsection{Th\'eor\`eme de Bertini et variantes}

Rappelons l'une des versions des  th\'eo\-r\`emes de Bertini.

\begin{thm}\label{bertini} 
 Soit $k$ un corps de caract\'eristique z\'ero. Soit $X$ une
$k$-vari\'et\'e projective, lisse, g\'eom\'etriquement connexe. Soit 
$f : X \to \P^n_{k}$ un $k$-morphisme d'image de dimension au moins 2
et engendrant l'espace projectif $\P^n_{k}$. Il existe un ouvert non vide de 
l'espace projectif dual de $\P^n_{k}$ tel que, pour tout hyperplan $h$ de $\P^n_{k}$
correspondant \`a un point de cet ouvert, la $k$-vari\'et\'e $X_{h}=f^{-1}(h) \subset X$
soit projective, lisse, g\'eom\'etriquement connexe.
\end{thm}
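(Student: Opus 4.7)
Je propose de v\'erifier s\'epar\'ement la lissit\'e et la connexit\'e g\'eom\'etrique de $X_h$, chacune sur un ouvert non vide de $(\P^n_{k})^*$, puis d'intersecter les deux ouverts obtenus. L'hypoth\`ese de caract\'eristique nulle sera essentielle, et l'hypoth\`ese $\dim f(X) \geq 2$ interviendra dans les deux parties, mais de mani\`eres distinctes.

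Pour la \emph{lissit\'e}, j'introduirais la vari\'et\'e d'incidence
$$I = \{(x,h) \in X \times_{k} (\P^n_{k})^* : f(x) \in h\},$$
qui est un $\P^{n-1}$-fibr\'e au-dessus de $X$ via la premi\`ere projection, donc lisse et g\'eom\'etriquement int\`egre. Sa fibre sch\'ematique au-dessus de $h$ par la seconde projection $q : I \to (\P^n_{k})^*$ est pr\'ecis\'ement $X_h = f^{-1}(h)$. L'hypoth\`ese $\dim f(X) \geq 2$ garantit que tout hyperplan de $\P^n_{k}$ rencontre $f(X)$, donc que $q$ est surjectif. La lissit\'e g\'en\'erique en caract\'eristique nulle, appliqu\'ee au morphisme dominant $q$ entre vari\'et\'es lisses int\`egres, produit alors un ouvert non vide $U_1 \subset (\P^n_{k})^*$ sur lequel $q$ est lisse~; en particulier $X_h$ est lisse sur $k(h)$ pour tout $h \in U_1$.

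Pour la \emph{connexit\'e g\'eom\'etrique}, j'invoquerais le th\'eor\`eme de connexit\'e classique de Bertini, appliqu\'e au sous-syst\`eme lin\'eaire de $|f^*\O_{\P^n}(1)|$ obtenu par tir\'e en arri\`ere des hyperplans de $\P^n_{k}$. Ce sous-syst\`eme est sans point base puisque $f$ est partout d\'efinie, et il est param\'etr\'e injectivement par $(\P^n_{k})^*$ car $f(X)$ engendre $\P^n_{k}$ : deux hyperplans distincts se tirent en arri\`ere en deux diviseurs distincts sur $X$. L'application rationnelle associ\'ee \`a ce syst\`eme lin\'eaire n'est autre que $f$, dont l'image est de dimension au moins $2$. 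Le th\'eor\`eme de connexit\'e de Bertini livre alors un ouvert non vide $U_2 \subset (\P^n_{k})^*$ sur lequel $X_h$ est g\'eom\'etriquement connexe. En posant $U = U_1 \cap U_2$, pour tout $h \in U$ la fibre $X_h$ est lisse et g\'eom\'etriquement connexe sur $k(h)$, et ces deux propri\'et\'es combin\'ees entra\^{\i}nent d'ailleurs l'int\'egrit\'e g\'eom\'etrique.

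La principale difficult\'e que je pr\'evois r\'eside dans l'\'etape de connexit\'e : la forme la plus directe du th\'eor\`eme de connexit\'e de Bertini est habituellement formul\'ee sur une vari\'et\'e normale avec une application \`a image de dimension au moins $2$. Si besoin, on peut passer par la factorisation de Stein $f = g \circ f'$, o\`u $f' : X \to X'$ est \`a fibres g\'eom\'etriquement connexes et $g : X' \to \P^n_{k}$ est fini sur son image, appliquer la connexit\'e de Bertini \`a la normalisation de $X'$, puis relever le r\'esultat \`a $X$ par $f'$ dont les fibres sont d\'ej\`a g\'eom\'etriquement connexes. Tout le reste se ram\`ene \`a la lissit\'e g\'en\'erique en caract\'eristique nulle.
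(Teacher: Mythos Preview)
Your proposal is correct, but you should know that the paper does not actually prove this statement: it records it as one of the standard versions of the th\'eor\`emes de Bertini and simply gives references, namely Jouanolou, \emph{Th\'eor\`emes de Bertini et applications}, Chap.~I, Th\'eor\`eme~6.3, and (over an algebraically closed field) Hartshorne, \emph{Algebraic Geometry}, Cor.~III.10.9 and Ex.~III.11.3. Your two-step argument --- the incidence variety $I \to (\P^n_k)^*$ combined with generic smoothness in characteristic zero for the lissit\'e, and the classical connectedness theorem of Bertini (with, if needed, a Stein-factorization reduction) for the connexit\'e g\'eom\'etrique --- is exactly the standard route one finds unpacked in those references. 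So rather than taking a different approach, you have expanded what the paper deliberately leaves as a black-box citation; nothing in your sketch is wrong, and the caution you raise about normality hypotheses in the connectedness step is well placed.
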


R\'ef\'erence : Jouanolou \cite[Chap. I, Th\'eor\`eme 6.3]{J}.
Sur un corps alg\'ebriquement clos :  Hartshorne \cite[Cor. III.10.9 et  Ex. III.11.3]{H}

\begin{lemma}\label{klar}  Soit $k$ un corps. Soit $X$ une
$k$-vari\'et\'e projective, lisse, g\'eom\'etri\-que\-ment connexe. Soit 
$f : X \to \P^n_{k}$ un $k$-morphisme  dont l'image engendre l'espace projectif $\P^n_{k}$.
Soit $r \leq n+1$ un entier. Il existe un ouvert de Zariski non vide de $X^r$ dont les points g\'eom\'etriques
sont les $r$-uples  $(P_{1}, \dots, P_{r}) \in X^r$ 
dont les images sont des points projectivement ind\'ependants dans $\P^n$.
\end{lemma}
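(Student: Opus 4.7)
Le plan est de montrer s\'epar\'ement que le lieu $U \subset X^r$ des $r$-uples $(P_{1}, \dots, P_{r})$ tels que $f(P_{1}), \dots, f(P_{r})$ soient projectivement ind\'ependants est (a) un ouvert de Zariski et (b) non vide.

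Pour l'ouverture, je consid\`ererais le lieu $D \subset (\P^n_{k})^r$ des $r$-uples $(Q_{1}, \dots, Q_{r})$ projectivement d\'ependants, i.e.\ engendrant un sous-espace projectif de dimension au plus $r-2$. Pour voir que $D$ est ferm\'e, je recouvrirais $(\P^n_{k})^r$ par les ouverts affines produits  de cartes standards $\prod_{j=1}^{r} \{x_{i_{j}}\neq 0\}$; sur chacun de ces ouverts, apr\`es normalisation, les coordonn\'ees homog\`enes forment une matrice $(n+1)\times r$, et $D$ est coup\'e par l'annulation des mineurs $r \times r$, ce qui montre que c'est un ferm\'e. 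Alors $U = X^r \setminus (f^r)^{-1}(D)$ est bien un ouvert de Zariski de $X^r$.

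Pour la non-vacuit\'e, je construirais par r\'ecurrence des points g\'eom\'etriques $P_{1}, \dots, P_{r}$ de $X$ \`a images projectivement ind\'ependantes. On part d'un point $P_{1}$ arbitraire. Supposons avoir construit $P_{1}, \dots, P_{j}$ avec $j < r$ dont les images engendrent un sous-espace projectif $L_{j-1} \subset \P^n$ de dimension $j-1$. Comme $j-1 \leq r-2 \leq n-1$, le sous-espace $L_{j-1}$ est strictement contenu dans $\P^n$. Par hypoth\`ese, l'image de $f$ engendre $\P^n$, donc $f(X)$ n'est pas contenue dans $L_{j-1}$, et il existe un point g\'eom\'etrique $P_{j+1}$ de $X$ tel que $f(P_{j+1}) \notin L_{j-1}$; le $(j+1)$-uple $(P_{1}, \dots, P_{j+1})$ a alors des images engendrant un sous-espace de dimension $j$. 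Au bout de $r$ \'etapes, on obtient le $r$-uple voulu, qui fournit un point g\'eom\'etrique de $U$.

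L'argument n'offre pas d'obstacle s\'erieux~: le seul point demandant un peu de soin est la v\'erification que $D$ est effectivement ferm\'e dans $(\P^n_{k})^r$, qui se ram\`ene \`a des calculs locaux de mineurs qui se recollent. L'hypoth\`ese $r \leq n+1$ est essentielle pour la non-vacuit\'e (au-del\`a, il est impossible d'avoir $r$ points projectivement ind\'ependants dans $\P^n$), mais elle est utilis\'ee uniquement dans l'\'etape de r\'ecurrence ci-dessus, sous la forme $r-1 \leq n$.
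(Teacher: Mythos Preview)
Ta preuve est correcte. La d\'emonstration du papier se r\'eduit \`a ``C'est clair'', et tu fournis exactement les d\'etails sous-entendus~: le lieu d\'ependant est ferm\'e par annulation des mineurs $r\times r$, et la non-vacuit\'e s'obtient par r\'ecurrence en utilisant que $f(X)$ engendre $\P^n$.
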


\begin{proof}
C'est clair.
\end{proof}

\begin{prop}\label{produitsimple}
Soit $k$ un corps de caract\'eristique z\'ero.  Soit $X$ une
$k$-vari\'et\'e projective, lisse, g\'eom\'etri\-que\-ment connexe. Soit 
$f : X \to \P^n_{k}$ un $k$-morphisme d'image de dimension au moins 2,
 engendrant l'espace projectif $\P^n_{k}$.
  Soit $r \leq n$ un entier. Il existe un ouvert  non vide $U \subset X^r$ tel que, pour tout corps $L$
   contenant $k$ et
  pour tout $L$-point   $(P_{1}, \dots, P_{r}) \in U(L)$, il existe un hyperplan $h  \subset \P^n_{L}$ 
    tel que l'image r\'eciproque $X_{h}=f^{-1}(h)  \subset X_{L}$ soit
  une $L$-vari\'et\'e lisse et g\'eom\'etriquement  int\`egre contenant les points  $\{P_{1}, \dots, P_{r}\}$.
  \end{prop}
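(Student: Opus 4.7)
Le plan est d'utiliser le théorème de Bertini \ref{bertini} (appliqué relativement), un comptage de dimensions, et le fait classique que les sections hyperplanes générales d'une variété projective irréductible non dégénérée restent non dégénérées; on passe ensuite des configurations géométriques aux configurations $L$-rationnelles grâce à la densité des points $L$-rationnels dans l'espace projectif. Tout d'abord, le théorème \ref{bertini} fournit un ouvert de Zariski non vide $W \subset \check{\P}^n_k$ tel que, pour tout $h \in W$, la variété $X_h = f^{-1}(h)$ soit lisse et géométriquement connexe, donc géométriquement intègre (puisque $k$ est de caractéristique zéro). On introduit le schéma d'incidence
\[
\mathcal{I} = \{((P_1, \ldots, P_r), h) \in X^r \times \check{\P}^n_k : f(P_i) \in h \text{ pour tout } i\},
\]
et l'on note $V \subset X^r$ l'ouvert du lemme \ref{klar} où les images $f(P_i)$ sont projectivement indépendantes. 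Au-dessus de $V$, la seconde projection fait de $\mathcal{I}|_V \to V$ un fibré projectif de Zariski en $\P^{n-r}$; donc $\mathcal{I}^\circ := \mathcal{I}|_V \cap (V \times W)$ est un sous-schéma ouvert, lisse sur $V$, et son image $U := p(\mathcal{I}^\circ)$ par la première projection $p$ est un ouvert de $V$.

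Le point-clé est d'établir que $U$ est non vide. On choisit $h \in W(\k)$ quelconque; le fait classique selon lequel la section hyperplane générale d'une variété projective irréductible non dégénérée de dimension au moins deux est elle-même non dégénérée dans l'hyperplan, combiné à Bertini, permet de supposer en outre que $f(X_h)$ engendre $h \simeq \P^{n-1}$. Comme $r \leq n$, on peut alors choisir successivement $r$ points $P_1, \ldots, P_r \in X_h(\k)$ dont les images $f(P_i)$ sont projectivement indépendantes dans $h$, donc dans $\P^n$. Ceci fournit un $\k$-point de $\mathcal{I}^\circ$ et $U$ est ainsi non vide.

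Enfin, pour toute extension $L/k$ et tout $(P_1, \ldots, P_r) \in U(L)$, la fibre $p^{-1}(P_1, \ldots, P_r)$ est un ouvert de Zariski non vide du sous-espace linéaire $L_{P_1, \ldots, P_r} \simeq \P^{n-r}_L$ des hyperplans de $\P^n_L$ contenant tous les $f(P_i)$. Les $L$-points étant denses dans tout $L$-espace projectif, cette fibre contient un $L$-point $h$; par construction $h \in W(L)$, donc $X_h$ est lisse et géométriquement intègre et contient bien $P_1, \ldots, P_r$. L'obstacle principal est l'étape de non-vacuité, qui repose sur le lemme classique de non-dégénérescence des sections hyperplanes générales d'une variété irréductible non dégénérée.
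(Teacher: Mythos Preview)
Your proof is correct and follows essentially the same architecture as the paper's: the incidence variety over $X^r \times \check{\P}^n$, the Bertini open $W$, the open $V \subset X^r$ from Lemma~\ref{klar}, the projective-bundle structure of $\mathcal{I}|_V \to V$, and the passage to $L$-points via density in $\P^{n-r}_L$. The one substantive difference lies precisely in the step you flag as the obstacle, the non-emptiness of $U$. You appeal to the classical fact that a general hyperplane section of an irreducible non-degenerate variety remains non-degenerate in the hyperplane, then pick $r \le n$ points of $X_h$ with independent images. The paper instead does a self-contained dimension count: the incidence $V_1 = \mathcal{I}|_V$ is irreducible of dimension $rd + n - r$ (as a $\P^{n-r}$-bundle over an open of $X^r$), while each fiber of $V_1 \to \check{\P}^n$ sits inside $(X_h)^r$ of dimension $r(d-1)$; hence if the projection were not dominant, one would get $\dim V_1 \le r(d-1) + (n-1)$, a contradiction. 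This gives dominance onto $\check{\P}^n$ and therefore non-trivial intersection with the Bertini locus $W$, without invoking the non-degeneracy lemma. Both routes are valid; the paper's is slightly more elementary in that it needs no auxiliary statement beyond Bertini and Lemma~\ref{klar}.
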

 
 \begin{proof}
  Soit $d$ la dimension de $X$.
  Notons ${\P}^*$ le projectif des hyperplans de $\P=\P^n$.
  On note indiff\'eremment $h$ un point de $\P^*$ ou l'hyperplan de $\P$
  qu'il d\'efinit.
   Pour $h \in \P^*$, on note 
  $X_{h} = f^{-1}(h)$. Par hypoth\`ese, chaque $X_{h}$ est de dimension 
  $d-1$. Par le th\'eor\`eme \ref{bertini}, il existe un ouvert  non vide $W_{0} \subset \P^*$
  tel que pour tout $h \in W_{0}$, la vari\'et\'e $X_{h}$ soit lisse et g\'eom\'etriquement connexe.

  Soit $Z \subset  X^r \times \P^*$ le ferm\'e dont les points g\'eom\'etriques sont les
   $(P_{1},\dots,P_{r};h)$
  avec $h \in \P^*$ et $P_{i} \in f^{-1}(h)$.
  Soient $p : Z \to \P^*$ et $q : Z \to X^r$ les deux projections.

Soit  $U_{1} \subset X^r$ un ouvert donn\'e par le lemme \ref{klar}.
La restriction $V_{1}=q^{-1}(U_{1}) \to U_{1}$ de  
  $q: Z \to X^r$ au-dessus de $U_{1}$
est une fibration en espaces projectifs de dimension $N-r$. 
La fibre au-dessus d'un point  $(P_{1}, \dots, P_{r}) $ consiste en les hyperplans de $\P^n$ 
qui contiennent $(P_{1}, \dots, P_{r})$.
Cette fibration est localement scind\'ee pour la topologie de Zariski,
localement c'est un espace projectif.
La vari\'et\'e $V_{1}$ est  donc lisse, g\'eom\'etriquement int\`egre, de dimension
$rd+N-r$. 
Au-dessus de tout point $h \in \P^*$, la fibre de la projection
$Z \to \P^*$ est le produit $(X_{h})^r$, qui est de dimension $r(d-1)$.
Si l'image de $V_{1} \subset Z$ via la projection $p : Z  \to \P^*$ n'\'etait pas
Zariski-dense dans $\P^*$, alors la dimension de $V_{1}$
serait au plus $r(d-1)+N-1$.   
Ainsi le morphisme compos\'e
$V_{1} \subset Z \to \P^*$ est dominant. 
Soit $W_{1} \subset \P^*$ un ouvert non vide contenu
dans son image. Soit $W=W_{0} \cap W_{1} \subset \P^*$.
Soit $V = p^{-1}(W) \cap V_{1} \subset Z$. 
Soit $U:= q(V) \subset   X^r$. C'est un ouvert de $U_{1} \subset X^r$,
puisque $q: V_{1}  \to U_{1}$ est un fibr\'e projectif, en particulier est lisse.
Comme  $q: V_{1}  \to U_{1}$ est un fibr\'e projectif localement scind\'e
pour la topologie de Zariski, 
et que le corps de base $k$ est infini, pour tout corps $L$
contenant $k$, la fl\`eche induite $V(L) \to U(L)$ est surjective.

Pour tout point   $h \in W \subset \P^*$,
l'image r\'eciproque $ V_{h}$ via $V \to W$ est
non vide, et c'est un ouvert de $p^{-1}(h) \subset X^r$.
Par ailleurs $p^{-1}(h) = (X_{h})^r$, qui est lisse et  g\'eom\'etriquement connexe
car on  a $W \subset W_{0}$.

 On a bien montr\'e : Pour tout $L$-point $M=(P_{1},\dots,P_{r})$ de $U$, il existe  un $L$-hyperplan $h$ 
 de $\P^n_{L}$, contenant chacun des $f(P_{i})$,  et tel que $f^{-1}(h)  \subset X_{L}$ soit une $L$-hypersurface
 lisse et g\'eom\'etriquement int\`egre.
\end{proof}

   \bigskip
   
  La proposition \ref{produitsimple} admet la g\'en\'eralisation suivante.
    
   \begin{prop}\label{produitsymetrique}
Soit $k$ un corps de caract\'eristique z\'ero.  Soit $X$ une
$k$-vari\'et\'e projective, lisse, g\'eom\'etriquement connexe. Soit 
$f : X \to \P^n_{k}$ un $k$-morphisme d'image de dimension au moins 2,
 engendrant l'espace projectif $\P^n_{k}$.
 Soient $s_{1}, \dots, s_{t}$ des entiers naturels tels que $\sum_{i}s_{i} \leq n$.
   Il existe un ouvert  lisse non vide $$U  \subset {\rm Sym}^{s_{1}}_{sep}X \times \dots \times {\rm Sym}^{s_{t}}_{sep}X$$ 
   tel que, pour tout corps $L$
   contenant $k$ et tout $L$-point de $U$, correspondant \`a une famille de   z\'ero-cycles
   effectifs s\'eparables $z_{i}$ sur $X_{L}$, avec $z_{i} $ de degr\'e $s_{i}$,
   il existe un hyperplan $h  \subset \P^n_{L}$ 
    tel que l'image r\'eciproque $X_{h}=f^{-1}(h)  \subset X_{L}$ soit
  une $L$-vari\'et\'e lisse et g\'eom\'etriquement  int\`egre contenant les points  
  du support du z\'ero-cycle $\sum_{i}z_{i}$.
  \end{prop}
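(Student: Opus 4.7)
La strat\'egie naturelle est de d\'eduire l'\'enonc\'e de la proposition \ref{produitsimple} par descente le long du rev\^etement \'etale $\pi : X^r_{sep} \to Y$, o\`u $r = s_{1}+\cdots+s_{t} \leq n$, o\`u $X^r_{sep} \subset X^r$ d\'esigne l'ouvert o\`u les $r$ coordonn\'ees sont deux \`a deux distinctes, et o\`u $Y = {\rm Sym}^{s_{1}}_{sep}X \times \cdots \times {\rm Sym}^{s_{t}}_{sep}X$. Ce morphisme est fini \'etale galoisien de groupe $G = \frak{S}_{s_{1}} \times \cdots \times \frak{S}_{s_{t}}$ agissant sur $X^r$ par permutation des blocs de coordonn\'ees.

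Je partirais d'un ouvert $U_{0} \subset X^r$ fourni par la proposition \ref{produitsimple} (qui par construction est contenu dans l'ouvert d'ind\'ependance projective du lemme \ref{klar}), puis formerais son $G$-satur\'e
\[
\tilde U := X^r_{sep} \cap \bigcap_{\sigma \in G} \sigma(U_{0}),
\]
ouvert $G$-invariant non vide, en tant qu'intersection finie d'ouverts non vides dans la vari\'et\'e int\`egre $X^r$. L'ouvert cherch\'e serait alors $U := \pi(\tilde U) \subset Y$, ouvert car $\pi$ est \'etale (donc ouvert) sur $X^r_{sep}$, et lisse puisque $Y$ l'est.

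Pour v\'erifier la propri\'et\'e, je consid\'ererais un $L$-point $y \in U(L)$ correspondant aux z\'ero-cycles s\'eparables $z_{1},\dots,z_{t}$. Le sch\'ema $\pi^{-1}(y)$ est un $G$-torseur \'etale fini sur $\Spec L$, donc trivialis\'e au-dessus d'une extension finie $L'/L$ ; j'obtiendrais ainsi un $L'$-point $(P_{1,1},\dots,P_{t,s_{t}}) \in \tilde U(L') \subset U_{0}(L')$, et la proposition \ref{produitsimple} fournirait un hyperplan $h' \in \P^*(L')$ tel que $f^{-1}(h') \subset X_{L'}$ soit lisse, g\'eom\'etriquement int\`egre, et contienne les points $P_{i,j}$.

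La principale difficult\'e sera ensuite la descente de $h'$ au corps $L$. L'ingr\'edient-cl\'e est le suivant : le $L$-sous-sch\'ema lin\'eaire $\mathbf{H}_{y} \subset \P^{*}_{L}$ des hyperplans contenant le $L$-sous-sch\'ema fini $f({\rm supp}(\sum_{i} z_{i})) \subset \P^{n}_{L}$ de longueur $r$ est, gr\^ace \`a l'ind\'ependance projective h\'erit\'ee du lemme \ref{klar}, un espace projectif $\P^{n-r}_{L}$ d\'efini sur $L$. Son intersection avec l'ouvert de Bertini (th\'eor\`eme \ref{bertini}) est un ouvert de $\mathbf{H}_{y}$ non vide (puisqu'il contient $h'$ apr\`es extension \`a $L'$), et $L$ \'etant infini en caract\'eristique z\'ero, admet un $L$-point $h$, qui fournit l'hyperplan $L$-rationnel recherch\'e.
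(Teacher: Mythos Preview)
Your argument is correct. Both your proof and the paper's descend from Proposition~\ref{produitsimple} along the $G$-quotient $X^r \to \prod_i {\rm Sym}^{s_i}X$, but the technical execution differs. The paper forms the quotient of the whole incidence variety $Z \subset X^r \times \P^*$ by $G$ and argues that the resulting morphism $V_1/G \to U_1/G$ remains a Zariski-locally trivial $\P^{n-r}$-bundle, so that $V'(L)\to U'(L)$ is onto for every $L\supset k$. You instead work fibre by fibre: for each $y\in U(L)$ you identify the fibre of hyperplanes through the support directly as the projectivisation of the $L$-vector space $H^0(\P^n_L, I_{f({\rm supp}\,\sum z_i)}(1))$, which is a genuine $\P^{n-r}_L$ because linear subspaces of projective space do not twist; then you invoke the Bertini open and the infiniteness of $L$. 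Your route avoids the (not entirely trivial) verification that the quotient bundle is Zariski-locally trivial, at the cost of the small detour through a splitting field $L'$ to certify non-emptiness of $\mathbf H_y\cap W_0$; this detour could in fact be replaced by passage to $\overline L$, since only non-emptiness as a scheme is needed. Either way, the two arguments are establishing the same key fact---that the relevant fibre of hyperplanes is a split projective space over $L$---by different means.
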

\begin{proof}
On utilise  la proposition \ref{produitsimple} et les notations de sa d\'emons\-tration.
On introduit le ferm\'e
 $$Z_{1} \subset {\rm Sym}^{s_{1}} X \times \dots \times {\rm Sym}^{s_{t}}X \times \P^*$$
 qui est l'image sch\'ematique de $Z \subset X^r \times \P^*$ par le morphisme fini
$$X^r \times \P^* \to {\rm Sym}^{s_{1}} X \times \dots \times {\rm Sym}^{s_{t}}X \times \P^*.$$
La projection $Z \to  X^r $ se quotiente 
par l'action du groupe fini $G=\frak{S}_{s_{1}} \times \dots \frak{S}_{s_{t}}$,
donnant la projection 
$Z_{1} \to {\rm Sym}^{s_{1}} X \times \dots \times {\rm Sym}^{s_{t}}X$.
On peut supposer que l'ouvert $U_{1} \subset X^r$ dans la proposition pr\'ec\'edente 
est contenu dans le compl\'ementaire des diagonales partielles de $X^r$.
 On a $V_{1} \subset Z$.
Le morphisme $V_{1} \to U_{1}$ d\'efinit un fibr\'e projectif localement trivial sur $U_{1}$
pour la topologie de Zariski,
et cette projection est compatible avec l'action  fid\`ele de $G$ sur $V_{1}$ et $U_{1}$.
Il en r\'esulte 
que le quotient $V_{1}/G \to U_{1}/G$ est un fibr\'e projectif localement
trivial pour la topologie de Zariski sur $U_{1}/G$. Soit $V' \subset V_{1}/G $
l'ouvert  qui est l'image de l'ouvert $V \subset V_{1}$ 
 par la projection $V_{1} \to V_{1}/G$, puis  $U' \subset U_{1}/G$
l'ouvert image de $V'$ par le morphisme $V_{1}/G \to U_{1}/G$. Il r\'esulte de ce qui pr\'ec\`ede
que, pour tout corps $L$ contenant $k$, la fl\`eche induite $V'(L) \to U'(L)$ est surjective.
Tout point g\'eom\'etrique de $\P^*$ qui est dans l'image de $V' \subset Z_{1}$ par la projection
$Z_{1}\to  \P^*$ est dans l'image de $V$, et donc correspond \`a un hyperplan dont l'intersection
avec $X$ est lisse et connexe. L'ouvert $U'$ convient pour l'\'enonc\'e de la proposition.
 \end{proof}

\begin{thm}\label{fertile}
Soit $F$ un corps de caract\'eristique z\'ero.  Soit $X$ une
$F$-vari\'et\'e projective, lisse, g\'eom\'etriquement connexe. Soit 
$f : X \to \P^n_{F}$ un $k$-morphisme d'image de dimension au moins 2,
 engendrant l'espace projectif $\P^n_{F}$.
  Soient $P_{1}, \dots, P_{t}$ 
  des points ferm\'es de $X$  de degr\'es respectifs $s_{i}$ sur 
  $k$, tels que la somme des $s_{i}$
  soit au plus \'egale \`a $n$. 

(a)  Si  $X$ satisfait la propri\'et\'e de densit\'e, par exemple si  le corps $F$ est  fertile, 
alors il  existe un hyperplan $h \subset \P^n_{F}$ 
 d\'efini sur $F$ tel que
$X_{h}=f^{-1}(h) \subset X$ soit lisse, g\'eom\'etriquement int\`egre,  et  contienne
    des z\'ero-cycles effectifs $z_{1}, \dots, z_{t}$ de degr\'es respectifs 
  $s_{1}, \dots, s_{t}$.
  
  (b) Si la vari\'et\'e $X$ satisfait la propri\'et\'e de $R$-densit\'e,
  par exemple si  $F$ est  fertile et  
  $X$  est g\'eom\'etriquement rationnellement connexe,
  alors il  existe un hyperplan $h \subset \P^n_{F}$ 
 d\'efini sur $F$ tel que
$X_{h}=f^{-1}(h) \subset X$ soit lisse, g\'eom\'etriquement int\`egre,  et  contienne
    des z\'ero-cycles effectifs $z_{1}, \dots, z_{t}$ de degr\'es respectifs 
  $s_{1}, \dots, s_{t}$, chaque z\'ero-cycle $z_{i}$ \'etant  rationnellement \'equivalent \`a $P_{i}$ sur $X$.

  \end{thm}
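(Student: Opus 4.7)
La strat\'egie est de combiner Proposition \ref{produitsymetrique}, qui r\`egle l'aspect g\'eom\'etrique (\^etre contenu dans une section hyperplane lisse), avec les Propositions \ref{densecycles} et \ref{Rdensecycles}, qui fournissent l'approximation arithm\'etique par des $F$-points dans le produit de puissances sym\'etriques.

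Je poserais $W = {\rm Sym}^{s_{1}}_{sep}X \times \dots \times {\rm Sym}^{s_{t}}_{sep}X$. L'hypoth\`ese $\sum_{i}s_{i} \leq n$ est exactement celle de Proposition \ref{produitsymetrique}, qui fournit donc un ouvert non vide $U \subset W$ tel que, pour tout corps $L$ contenant $F$ et tout $L$-point de $U$ correspondant \`a une famille de z\'ero-cycles effectifs s\'eparables $(z_{1},\dots,z_{t})$ avec ${\rm deg}(z_{i})=s_{i}$, il existe un $L$-hyperplan $h \subset \P^n_{L}$ tel que $X_{h}=f^{-1}(h)$ soit lisse, g\'eom\'etriquement int\`egre et contienne le support de $\sum_{i}z_{i}$. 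Le $t$-uplet $(P_{1},\dots,P_{t})$ d\'efinit lui-m\^eme un $F$-point de $W$; en particulier $W$ poss\`ede un $F$-point, et $W$ est g\'eom\'etriquement int\`egre comme produit de puissances sym\'etriques d'une vari\'et\'e lisse g\'eom\'etriquement int\`egre.

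Pour \'etablir (a), j'utiliserais Proposition \ref{densecycles}: la propri\'et\'e de densit\'e pour $X$, garantie en particulier lorsque $F$ est fertile, assure que l'ensemble des $F$-points de $W$ correspondant \`a un $t$-uplet de z\'ero-cycles effectifs de degr\'es $s_{i}$ est Zariski-dense dans $W$. Comme $U$ est un ouvert non vide de la vari\'et\'e irr\'eductible $W$, cet ensemble dense rencontre $U$, donnant un $F$-point de $U$. On applique alors la conclusion de Proposition \ref{produitsymetrique} en $L=F$ \`a ce point: on obtient l'hyperplan $h$ d\'efini sur $F$ et les cycles effectifs $z_{i}$ de degr\'e $s_{i}$ port\'es par $X_{h}$.

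Pour (b), le sch\'ema est identique en rempla\c cant Proposition \ref{densecycles} par Proposition \ref{Rdensecycles}: la propri\'et\'e de $R$-densit\'e, qui vaut en particulier lorsque $F$ est fertile et $X$ est g\'eom\'etri\-que\-ment rationnellement connexe en vertu de Th\'eor\`eme \ref{ratconnexeRdense}, fournit un sous-ensemble Zariski-dense $\mathcal{E} \subset W(F)$ form\'e des $t$-uplets $(z_{1},\dots,z_{t})$ tels que chaque $z_{i}$ soit rationnellement \'equivalent \`a $P_{i}$ sur $X$. L'intersection $U \cap \mathcal{E}$ est non vide, et en un tel $F$-point Proposition \ref{produitsymetrique} livre l'hyperplan cherch\'e, avec en plus la propri\'et\'e d'\'equivalence rationnelle demand\'ee. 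Aucune difficult\'e s\'erieuse ne reste une fois les Propositions \ref{produitsymetrique}, \ref{densecycles} et \ref{Rdensecycles} \'etablies: tout le contenu non trivial est d\'ej\`a concentr\'e dans ces r\'esultats pr\'eparatoires, l'argument final se r\'eduisant \`a l'intersection d'un ouvert non vide avec un sous-ensemble dense de $F$-points.
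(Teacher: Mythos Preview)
Your proposal is correct and follows exactly the approach of the paper, which simply states that (a) is obtained by combining Propositions \ref{densecycles} and \ref{produitsymetrique} together with the definition of fertile fields, and that (b) is obtained by combining Propositions \ref{Rdensecycles} and \ref{produitsymetrique} with Th\'eor\`eme \ref{ratconnexeRdense}. You have merely spelled out the intersection-of-a-dense-set-with-a-nonempty-open argument that the paper leaves implicit.
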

  
  \begin{proof}
  Le point (a) est obtenu en combinant les propositions  \ref{densecycles} et \ref{produitsymetrique},
  et en utilisant la d\'efinition des corps fertiles.

  Le point (b)  est  obtenu en combinant 
  les propositions  \ref{Rdensecycles} et  \ref{produitsymetrique}, et   le th\'eor\`eme 
  \ref{ratconnexeRdense} pour les
    vari\'et\'es rationnellement connexes sur un corps fertile.
     \end{proof}
 
\subsection{G\'en\'erisation et sp\'ecialisation} 

 On a l'\'enonc\'e bien connu suivant.
 \begin{lemma}\label{special}
 Soit $R$ un anneau de valuation discr\`ete excellent, $F$ son corps des fractions et $k$ son corps r\'esiduel.
 Soit $\mathcal X$ un $R$-sch\'ema propre. Si la $F$-vari\'et\'e ${\mathcal X} \times_{R}F$ poss\`ede un point ferm\'e $P$ de degr\'e $d$,
 alors il existe un z\'ero-cycle effectif $z$ de degr\'e $d$ sur la $k$-vari\'et\'e ${\mathcal X} \times_{R}k$. 
 \end{lemma}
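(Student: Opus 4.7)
The plan is to reduce everything to the statement that a proper flat $R$-scheme of relative dimension $0$ and generic degree $d$ has special fibre of $k$-length $d$.

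First, since $P$ is a closed point on $\mathcal{X}_F$ with residue field of degree $d$ over $F$, let $Z \subset \mathcal{X}$ be the scheme-theoretic closure of $\{P\}$ (taken with its reduced induced structure). Then $Z$ is an integral closed subscheme of $\mathcal{X}$, hence proper over $R$. Its generic fibre $Z_F$ is reduced to the single point $P$ with residue field $F(P)$, a finite extension of $F$ of degree $d$; in particular the morphism $Z \to \Spec R$ is of relative dimension $0$ at the generic point, and by upper semicontinuity of fibre dimension plus the fact that $Z$ is integral and dominates $\Spec R$, it is quasi-finite.

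Next I would invoke flatness: since $Z$ is integral and the structural morphism $Z \to \Spec R$ is dominant (as the generic point of $Z$ lies over the generic point of $\Spec R$), the $R$-algebra $\mathcal{O}_Z$ is torsion-free, hence flat over the DVR $R$. Combined with properness and quasi-finiteness, $Z \to \Spec R$ is finite and flat, so $Z = \Spec A$ with $A$ a finite flat $R$-algebra of constant rank equal to $\dim_F(A \otimes_R F) = [F(P):F] = d$. (Excellence of $R$ is not strictly needed here; it would only be relevant if one preferred to replace $Z$ by its normalisation.)

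Finally, the special fibre is $Z_k = \Spec(A \otimes_R k)$, a finite $k$-scheme of total $k$-dimension $d$. Via the closed immersion $Z_k \hookrightarrow \mathcal{X}_k$ we associate to it the effective zero-cycle
\[
z \;=\; \sum_{q \in Z_k} \on{length}_{\mathcal{O}_{Z_k,q}}\bigl(\mathcal{O}_{Z_k,q}\bigr)\cdot [q],
\]
whose degree over $k$ equals $\sum_{q} \on{length}(\mathcal{O}_{Z_k,q})\cdot [k(q):k] = \dim_k(A \otimes_R k) = d$. This is the desired effective zero-cycle on $\mathcal{X} \times_R k$. There is really no hard step here; the only point requiring a moment of care is the flatness of $Z$ over $R$, which is automatic because $R$ is a DVR and $Z$ is integral and dominates $\Spec R$.
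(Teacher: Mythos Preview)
Your proof is correct and follows essentially the same route as the paper: take the closure of $P$ in $\mathcal X$, observe it is finite and flat over $R$, and read off the effective zero-cycle of degree $d$ from the special fibre. The only cosmetic difference is that the paper first passes to the integral closure $S$ of $R$ in $F(P)$ (which is where the excellence hypothesis is used, to ensure $S$ is finite over $R$), whereas you work directly with the schematic closure and correctly note that flatness is automatic over a DVR, so excellence is not actually needed.
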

 \begin{proof} La fermeture int\'egrale de $R$ dans l'extension $F(P)/F$ est un anneau de Dedekind $S$ semi-local,
 fini et plat sur $R$, de degr\'e $d$. Comme le $R$-sch\'ema ${\mathcal X}$ est propre,
 l'adh\'erence du  point $P\in X(F)$  dans  $\mathcal X$ est un sch\'ema fini et plat de degr\'e $d$.
  La fibre de ce point au-dessus de $\Spec(k) \subset \Spec(R)$ est un sous $k$-sch\'ema
de dimension z\'ero de  ${\mathcal X} \times_{R}k$, dont le z\'ero-cycle associ\'e est de degr\'e $d$.
 \end{proof}

 \begin{prop}\label{reducfertile}
 Soient $k$ un corps et $F=k((t))$ le corps des s\'eries formelles sur $k$.
 Soit $X$ une $k$-vari\'et\'e propre.

 (a)  Le pgcd des degr\'es des points ferm\'es a la m\^eme valeur sur
  $X$ et sur $X_{F}$.

 (b)  Pour tout entier $r\geq 1$, 
  le plus petit degr\'e d'un point ferm\'e
 de degr\'e premier \`a $r$, qui est aussi le plus petit degr\'e d'un z\'ero-cycle effectif
 de degr\'e premier \`a $r$, a la m\^{e}me valeur sur $X$ et sur $X_{F}$.
 
  (c) Soit $I$ un ensemble d'entiers naturels. Si le groupe de Chow des
 z\'ero-cycles sur $X_{F}$ est engendr\'e par
 les classes de cycles  effectifs de degr\'e $d \in I$, alors il en est de m\^{e}me
sur $X$.
 
 (d) Soit $d\geq 0$ un entier. Si tout z\'ero-cycle sur $X_{F}$ de degr\'e au moins $d$
 est rationnellement \'equivalent \`a un z\'ero-cycle effectif, alors il en de m\^{e}me
 sur $X$.

  \end{prop}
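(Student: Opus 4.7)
Mon plan repose sur deux op\'erations r\'eciproques reliant $X$ et $X_{F}$. D'une part, le rel\`evement par changement de base : un point ferm\'e $P$ de $X$ de degr\'e $d$ donne un point ferm\'e $P_{F}$ de $X_{F}$ de m\^{e}me degr\'e, car $k(P)\otimes_{k}k((t))=k(P)((t))$ est un corps ($k[[t]]$ \'etant hens\'elien et $k(P)/k$ s\'eparable en caract\'eristique z\'ero, le polyn\^ome minimal d'un \'el\'ement primitif de $k(P)/k$ reste irr\'eductible sur $k((t))$). D'autre part, la sp\'ecialisation du lemme \ref{special}, appliqu\'e au $R$-sch\'ema propre $\mathcal{X}=X\times_{k}R$ avec $R=k[[t]]$, associe \`a tout point ferm\'e de $X_{F}$ de degr\'e $d$ un z\'ero-cycle effectif de degr\'e $d$ sur $X$.

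Les assertions (a) et (b) en d\'ecoulent imm\'ediatement. Pour (a), notant $\delta$ le pgcd des degr\'es des points ferm\'es, le rel\`evement donne $\delta(X_{F})\mid\delta(X)$, et la sp\'ecialisation donne $\delta(X)\mid\delta(X_{F})$. Pour (b), si $X_{F}$ poss\`ede un point ferm\'e de degr\'e $d$ premier \`a $r$, la sp\'ecialisation produit sur $X$ un z\'ero-cycle effectif $\sum n_{Q}Q$ v\'erifiant $d=\sum n_{Q}[k(Q):k]$ ; pour $r$ premier, au moins un des $[k(Q):k]$ est alors premier \`a $r$, fournissant un point ferm\'e de degr\'e premier \`a $r$ et $\leq d$ (cet argument donne au passage, sur chaque c\^{o}t\'e, l'\'egalit\'e entre le plus petit degr\'e d'un point ferm\'e et celui d'un z\'ero-cycle effectif, tous deux de degr\'e premier \`a $r$). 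L'in\'egalit\'e oppos\'ee r\'esulte du rel\`evement.

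Pour (c) et (d), il faut transporter ces observations au niveau du groupe de Chow. La th\'eorie de Fulton \cite[\S 20.3]{F} fournit une application de sp\'ecialisation $\sigma : CH_{0}(X_{F}) \to CH_{0}(X)$ qui pr\'eserve le degr\'e et dont la valeur sur la classe d'un point ferm\'e de $X_{F}$ est celle du cycle sp\'ecialis\'e du lemme \ref{special}. En particulier $\sigma$ envoie une classe effective sur une classe effective, de m\^{e}me degr\'e. Pour un point ferm\'e $P$ de $X$, l'adh\'erence de $P_{F}$ dans $\mathcal{X}$ est l'extension plate $P\times_{R}R$, de fibre sp\'eciale $P$, d'o\`u $\sigma([P_{F}])=[P]$ ; par $\Z$-lin\'earit\'e, $\sigma([z_{F}])=[z]$ dans $CH_{0}(X)$ pour tout z\'ero-cycle $z$ sur $X$.

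Les \'enonc\'es (c) et (d) en d\'ecoulent. Pour (c), l'hypoth\`ese $[P_{F}]=\sum_{j} m_{j}[z_{j}']$ dans $CH_{0}(X_{F})$ avec les $z_{j}'$ effectifs de degr\'es $d_{j}\in I$ donne, apr\`es application de $\sigma$, l'\'egalit\'e $[P]=\sum_{j} m_{j}\sigma([z_{j}'])$ dans $CH_{0}(X)$, chaque $\sigma([z_{j}'])$ \'etant la classe d'un cycle effectif de degr\'e $d_{j}\in I$. Pour (d), $z_{F}$ \'etant \'equivalent \`a un cycle effectif $w_{F}$ de degr\'e $\deg(z)\geq d$ sur $X_{F}$, l'application de $\sigma$ donne $[z]=\sigma([w_{F}])$ dans $CH_{0}(X)$, avec $\sigma([w_{F}])$ effectif. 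Le point technique principal est la mise en place correcte de $\sigma$ ; \`a d\'efaut d'invoquer Fulton, on peut la construire directement en prenant les adh\'erences dans $\mathcal{X}$ des courbes qui r\'ealisent les \'equivalences rationnelles sur $X_{F}$.
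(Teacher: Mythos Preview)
Your argument is correct and follows the same approach as the paper: lifting closed points by base change together with specialization via Lemma~\ref{special} for (a) and (b), and Fulton's specialization homomorphism on Chow groups \cite[\S 20.3]{F} for (c) and (d). You give more detail than the paper does (notably the verification that $\sigma([P_{F}])=[P]$), and your explicit caveat ``pour $r$ premier'' in (b) matches the level of argument in the paper's own proof.
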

  \begin{proof}
  Si $P \in X$ est un point ferm\'e de $X$, alors $P\times_{k}F$ est un point ferm\'e de $X_{F}$
  de m\^{e}me degr\'e. Si $M$ est un point ferm\'e de $X_{F}$ de degr\'e $d$, d'apr\`es le lemme \ref{special},
   il existe 
  un z\'ero-cycle sur $X$ de degr\'e $d$, et si $d$ est premier \`a $r$, il existe sur $X$
  un point ferm\'e de degr\'e premier \`a $r$ et au plus \'egal \`a $d$.
 Les \'enonc\'es (c) et (d) sont des cons\'equences de l'existence et des propri\'et\'es de l'homomorphisme
 de sp\'ecialisation sur les groupes de Chow \cite[\S 20.3]{F}.
  \end{proof}

\bigskip

 \section{Surfaces cubiques lisses}\label{toutcubique} 
 
 \subsection{Surfaces cubiques avec un z\'ero-cycle de degr\'e 1}\label{cubique1}

Le th\'eor\`eme suivant est d\^u \`a Coray \cite{C1}. 
Nous en reproduisons les diff\'erents
pas, avec la simplification apport\'ee par l'utilisation du th\'eor\`eme \ref{fertile}(a) : il n'y a plus
de discussion des cas possibles  o\`u les courbes utilis\'ees dans la d\'emonstration
sont r\'eductibles ou singuli\`eres. 
 
  \begin{thm} (Coray) \label{cubiques}
  Soit $k$ un corps de caract\'eristique z\'ero. 
  Si une $k$-surface cubique lisse $X \subset \P^3_{k}$
  contient un z\'ero-cycle de degr\'e 1, alors elle poss\`ede  
  un point ferm\'e de degr\'e 1,  ou 4, ou 10. 
   \end{thm}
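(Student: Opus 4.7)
Le plan suit la m\'ethode originelle de Coray, simplifi\'ee par l'utilisation du th\'eor\`eme \ref{fertile}(a) qui \'evite les discussions des cas d\'eg\'en\'er\'es. Soit $n$ le plus petit degr\'e d'un point ferm\'e de $X$ premier \`a $3$ ; un tel $n$ existe puisque les sections hyperplanes fournissent d\'ej\`a des cycles effectifs de degr\'e $3$ et que le pgcd des degr\'es des points ferm\'es vaut $1$. On veut montrer $n \in \{1, 4, 10\}$, par l'absurde. Par la proposition \ref{reducfertile}(b), on peut remplacer $k$ par le corps fertile $F = k((t))$ sans modifier $n$ ; la surface cubique $X$ satisfait alors la propri\'et\'e de $R$-densit\'e (proposition \ref{cubiqueRdense} ou th\'eor\`eme \ref{ratconnexeRdense}).

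Soit $P$ un point ferm\'e de degr\'e $n \notin \{1, 4, 10\}$. Pour $d \in \{1, 2, 3\}$, la composition de l'inclusion $X \hookrightarrow \P^3_k$ avec le plongement de Veronese d'ordre $d$ donne un $k$-morphisme $X \to \P^{N_d}_k$ d'image de dimension $2$, avec $N_d = \binom{d+3}{3} - 1 \in \{3, 9, 19\}$. Pourvu que $n \leq N_d$, le th\'eor\`eme \ref{fertile}(a) appliqu\'e \`a ce morphisme et au cycle $P$ fournit une hypersurface $S \subset \P^3_k$ de degr\'e $d$ telle que $\Gamma = X \cap S$ soit une courbe lisse g\'eom\'etriquement int\`egre contenant $P$. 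Par la formule d'adjonction, $\Gamma$ est de degr\'e $3d$ et de genre $g_d = 3d(d-1)/2 + 1$, prenant les valeurs $1, 4, 10$ pour $d = 1, 2, 3$, et la classe hyperplane $H|_{\Gamma}$ est de degr\'e $3d$ sur $\Gamma$.

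Sur $\Gamma$, toute classe de degr\'e au moins $g_d$ admet un repr\'esentant effectif (application d'Abel--Jacobi surjective). Pour chaque $n$ \`a exclure, on consid\`ere une combinaison $a[P] - b[H|_{\Gamma}]$ sur $\Gamma$, de degr\'e premier \`a $3$ et au moins $g_d$ : pour $n = 2$, prendre $d = 1$ et $2[P] - [H|_{\Gamma}]$ de degr\'e $1 = g_1$, qui par Riemann--Roch fournit directement un $F$-point de $\Gamma \subset X$, contredisant $n = 2$ ; pour $n \in \{5, 7, 8\}$, prendre $d = 2$ et des classes telles que $2[P] - [H|_{\Gamma}]$ ou $2[P] - 2[H|_{\Gamma}]$ de degr\'es $4$, $8$ ou $4$ ; pour $11 \leq n \leq 19$, prendre $d = 3$ et $2[P] - [H|_{\Gamma}]$ de degr\'e $2n - 9 \geq 10 = g_3$ ; pour $n \geq 20$, on r\'eit\`ere avec un plongement de Veronese de degr\'e sup\'erieur. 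Dans le syst\`eme lin\'eaire de la classe effective choisie, par le th\'eor\`eme de Bertini pour les syst\`emes lin\'eaires sur une courbe lisse et par la densit\'e des points $F$-rationnels disponible via la fertilit\'e de $F$, on choisit un repr\'esentant effectif s\'eparable $D = \sum_i Q_i$ dont le support n'est pas r\'eduit \`a un unique point ferm\'e. Comme $\deg D$ est premier \`a $3$, au moins un $\deg Q_i$ l'est ; un examen cas par cas des partitions de $\deg D$ montre alors qu'on peut extraire un $Q_i$ de degr\'e premier \`a $3$ strictement inf\'erieur \`a $n$, contredisant la minimalit\'e de $n$.

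L'obstacle principal est pr\'ecis\'ement le cas o\`u le repr\'esentant effectif s\'eparable serait un unique point ferm\'e de degr\'e $\deg D > n$, situation qui ne r\'eduirait pas $n$. La dimension positive du syst\`eme lin\'eaire consid\'er\'e (obtenue par Riemann--Roch dans tous les cas o\`u $\deg D > g_d$, ou trivialement pour les classes de degr\'e exactement $g_d$ qui ne concernent que les cas $n \in \{2, 5, 8\}$ d\'ej\`a r\'esolus directement), conjugu\'ee \`a la propri\'et\'e de $R$-densit\'e sur le corps fertile $F$, permet d'\'ecarter g\'en\'eriquement cette configuration. La souplesse apport\'ee par la combinaison du th\'eor\`eme \ref{fertile} et de l'hypoth\`ese de fertilit\'e est pr\'ecis\'ement ce qui distingue cette approche de la d\'emonstration originelle de Coray, laquelle n\'ecessitait une discussion au cas par cas des d\'eg\'en\'erescences singuli\`eres ou r\'eductibles de $\Gamma$ au sein du syst\`eme lin\'eaire consid\'er\'e.
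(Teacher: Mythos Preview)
Your proposal has genuine gaps that prevent it from being a proof.

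\textbf{The ``single closed point'' obstacle is not resolved.} You produce on $\Gamma$ an effective divisor $D$ of some degree prime to~$3$, and then hope to choose a representative whose support is not a single closed point. Your justification (``dimension positive du syst\`eme lin\'eaire \ldots\ conjugu\'ee \`a la $R$-densit\'e'') is not an argument: a linear system $|D|$ of positive dimension on a curve over a fertile field can perfectly well have \emph{all} its $F$-rational members reduced to a single closed point. (Think of a pointless conic $C/\Q$ and $|D|$ of degree~$2$: the linear system is $\P^2_{\Q}$, yet every $\Q$-divisor in it is a single degree-$2$ point.) Concretely, for $n=7$ your recipe with $d=2$ gives $\deg D = 8$; nothing prevents the effective representative from being a single closed point of degree~$8$, which does not contradict minimality. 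The $R$-density of $X$ concerns moving points on $X$, not splitting divisors on $\Gamma$.

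\textbf{The case $n \geq 20$ is absent.} Saying ``on r\'eit\`ere avec un plongement de Veronese de degr\'e sup\'erieur'' is not a proof: you would need, for each $n$, a Veronese degree $d$ with $n \leq N_d$ and a combination $a[P]-b[H|_\Gamma]$ of degree in $[g_d, n-1]$ and prime to~$3$. These numerical constraints are nontrivial and you have not checked them.

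The paper's proof avoids both problems by a different mechanism: via Th\'eor\`eme~\ref{fertile}(a) it places on the \emph{same} curve $\Gamma \in |-nK|$ both a degree-$d$ cycle and a degree-$3$ cycle. Then $\Gamma$ carries a zero-cycle of degree~$1$, hence by Riemann--Roch an effective cycle of degree exactly $g(\Gamma) < d$, with no need to split anything. The residual boundary cases are handled by explicit numerics, and the hardest case $d = 3n(n-1)/2+1$ requires blowing up a degree-$3$ point to lower the genus. Your approach, using only $[P]$ and the (degree-$3d$) hyperplane class on $\Gamma$, cannot in general reach degree~$1$ on $\Gamma$, which is why you are forced into the unjustified splitting argument.
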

   
   \begin{proof} 
   
   L'\'enonc\'e peut se reformuler ainsi : si la $k$-surface cubique lisse
   poss\`ede un point ferm\'e  de degr\'e $d$ premier \`a 3, alors le degr\'e minimal
   d'un tel point est $1$, ou $4$, ou $10$. 
   Notons que ce degr\'e minimal est aussi le
   degr\'e minimal d'un z\'ero-cycle effectif de degr\'e premier \`a 3.
   
   On va syst\'ematiquement appliquer le th\'eor\`eme \ref{fertile}(a).
   On peut le faire soit en invoquant le fait que la propri\'et\'e de densit\'e vaut
   pour les surfaces cubiques lisses sur $k$ car elles sont $k$-unirationnelles
   d\`es qu'elles ont un $k$-point (Segre, \cite{Ko02}), soit en utilisant la
  la proposition \ref{reducfertile} qui permet, pour le th\'eor\`eme \`a d\'emontrer,
  de supposer le corps $k$ fertile.
   On note $K$  le faisceau canonique sur $X$. Le syst\`eme lin\'eaire complet    
    $\lvert -K \rvert$ associ\'e  au faisceau inversible $-K$
d\'efinit le plongement de $X$ dans $\P^3_{k}$.
    Pour tout entier $n>0$,  le syst\`eme lin\'eaire $\lvert -nK \rvert$    
 d\'efinit  un plongement
   dans un espace projectif, d'image de dimension 2, engendrant cet espace projectif.
    Pour un fibr\'e inversible $\L$, on note $h^{i}(X,\L)$, ou $h^{i}(\L)$ quand le contexte est clair,
    la dimension sur $k$
 du groupe de cohomologie coh\'erente $H^{i}(X,\L)$. 
 
 Pour la surface cubique lisse $X$
 comme pour toute surface projective et lisse g\'eom\'etriquement rationnelle, on a 
 $H^1(X,\O_{X})=0$ et $H^2(X,\O_{X})=0$, et donc $\chi(X,\O_{X})=h^0(\O_{X}) - h^1(\O_{X})+h^2(\O_{X})=1$.
 Soit $n\geq 1$. 
 
 Par dualit\'e de Serre \cite[Chap. IV, Prop. 4.1]{AK}
 on a $h^2(-nK)= h^0((n+1)K)$ et $h^1(-nK)=h^1((n+1)K)$.
 On a $h^0((n+1)K) =0$ car $-K$   est ample. 
 
Par dualit\'e de Serre on aussi  On a aussi $h^1((n+1)K)=0$
 par le th\'eor\`eme d'annulation de Kodaira, puisque $-K$ est ample.
   
    Pour $n \geq 1$,   le th\'eor\`eme de Riemann-Roch  sur la surface $X$ (\cite[Chap. IV, \S 8]{Se} \cite[Lecture 12, Prop. 3]{Mu})  donne
      donc $$h^0(-nK)= 3 n(n+1)/2 +1.$$ 
  
 Si $\Gamma$ est une courbe projective,
 lisse, g\'eom\'etriquement connexe dans le syst\`eme lin\'eaire 
$\vert -nK \rvert$, 
 on a
 la formule
 $$g(\Gamma)= p_{a}(\Gamma)= 3 n(n-1)/2 +1.$$
 Une telle courbe contient un z\'ero-cycle de degr\'e $3n= (-K.-nK)$, d\'ecoup\'e par
 un plan de $\P^3_{k}$.   
 
 Soit $d>0$ le degr\'e minimum  d'un z\'ero-cycle effectif  de  degr\'e premier \`a 3 sur  $X$.
 C'est donc aussi le degr\'e minimum  d'un point ferm\'e de  degr\'e premier \`a 3 sur  $X$.
Si $d=1$, on a fini.  
 Supposons $d\geq 2$. Si la  surface cubique poss\`ede un point sur
 une extension quadratique de $k$, une construction bien connue
 montre qu'elle poss\`ede un point rationnel.
 On se limite donc dor\'enavant au cas $d\geq 4$.
 La surface $X$ contient un point ferm\'e de degr\'e $3$, d\'ecoup\'e par
 une droite quelconque de $\P^3_{k}$.
  
 Il existe un unique entier $n \geq 1$ tel que
 $$ g=3 n(n-1)/2 +1 \leq d < 3 n(n+1)/2 +1.$$
 Comme on a $d \geq 4$, on a $n \geq 2$.

 \medskip
 Supposons d'abord $g=3 n(n-1)/2 +1  <  d  \leq  3 n(n+1)/2 -3$.
 Comme $d$ est premier \`a 3 et $d+3  \leq  3 n(n+1)/2$,
 le th\'eor\`eme \ref{fertile}(a) assure l'existence d'une courbe 
 $\Gamma$ projective, lisse, g\'eom\'etriquement connexe
 dans le syst\`eme lin\'eaire $\lvert -nK \rvert$, contenant la r\'eunion
 d'un 
 z\'ero-cycle effectif
 de degr\'e $d$ et d'un
 z\'ero-cycle effectif
  de degr\'e $3$, donc contenant
 un z\'ero-cycle de degr\'e 1, et donc aussi un z\'ero-cycle de degr\'e
 $g=3 n(n-1)/2 +1$.  Par le th\'eor\`eme de Riemann-Roch,
la courbe $\Gamma$ poss\`ede donc un
 z\'ero-cycle effectif de degr\'e $3 n(n-1)/2 +1 <d$,
 ce qui contredit l'hypoth\`ese que $d$ est minimal.
  
\medskip
 
Il reste  donc les possibilit\'es suivantes :
 $$d= 3 n(n+1)/2,$$
 $$d =3 n(n+1)/2 - 1,$$
 $$d  = 3 n(n+1)/2 -2,$$
 $$d= 3n (n-1)/2+1.$$

 Le cas $d= 3 n(n+1)/2$ est exclu, car $d$ est premier \`a $3$.
 
 Dans chacun des trois  autres cas, toute courbe lisse $\Gamma$ 
 dans le syst\`eme lin\'eaire $\lvert -nK \rvert$ contenant un 
 z\'ero-cycle   de degr\'e $d$
 contient un z\'ero-cycle de degr\'e 4, car, comme
 on l'a d\'ej\`a indiqu\'e, elle contient un z\'ero-cycle de degr\'e $3n$.
 
\bigskip

Supposons  $d =3 n(n+1)/2 - 1$. Par le th\'eor\`eme \ref{fertile}(a),
 il existe une courbe $\Gamma$ lisse g\'eom\'etriquement connexe dans le syst\`eme lin\'eaire $\lvert -nK \rvert$ contenant
un z\'ero-cycle effectif
 de degr\'e $d$, degr\'e qui est congru  \`a $2$ mod. 3.
 Comme la courbe $\Gamma$ contient un z\'ero-cycle de degr\'e $4$,
elle contient donc aussi un z\'ero-cycle de degr\'e
$d-4$, degr\'e qui est  premier \`a $3$.
Comme on a $n \geq 2$, on a
$$g= 3 n(n-1)/2 + 1\leq 3 n(n+1)/2 - 1 -4 =d-4.$$
Le th\'eor\`eme de Riemann-Roch sur la courbe $\Gamma$
assure alors l'existence d'un z\'ero-cycle effectif de degr\'e $d-4$,
premier \`a $3$,  ce qui est en contradiction avec l'hypoth\`ese $d$ minimal.

\bigskip

Supposons  $d =3 n(n+1)/2 - 2$ et $n$ impair.
Par le th\'eor\`eme \ref{fertile}(a),  il existe une courbe $\Gamma$ lisse g\'eom\'etriquement connexe
dans le syst\`eme lin\'eaire 
$\lvert -nK \rvert$ contenant
 un  z\'ero-cycle effectif de degr\'e $d$, degr\'e qui est congru  \`a $1$ mod. 3.
 Comme $2$ est combinaison lin\'eaire de $ 3 n(n+1)/2 - 2$ et $3n$,
 il existe alors un z\'ero-cycle de degr\'e 2 sur $\Gamma$.
 La courbe $\Gamma$ 
   contient donc un z\'ero-cycle de degr\'e $d-2$. 
Comme on a $n \geq 2$, 
on a $$g=3 n(n-1)/2 + 1\leq 3 n(n+1)/2 - 2 -2 =d-2.$$
Par le th\'eor\`eme de Riemann-Roch, sur la courbe $\Gamma$, il existe
un z\'ero-cycle effectif de degr\'e $d-2$, qui est   premier \`a 3. 
Ainsi  $X$ poss\`ede un z\'ero-cycle effectif de degr\'e 
 $d-2$ premier \`a 3, ce qui est en contradiction avec l'hypoth\`ese $d$ minimal.

\medskip

Supposons donc $d =3 n(n+1)/2 - 2$ et $n$ pair, donc $n \geq  2$.
Par le th\'eor\`eme \ref{fertile}(a),  il existe une courbe $\Gamma$ lisse dans le syst\`eme lin\'eaire 
$\lvert -nK \rvert$ contenant
 un z\'ero-cycle effectif
 de degr\'e $d$, degr\'e qui est congru  \`a $1$ mod. 3.
 
Dans le cas $n=2$, on a $g=4$ et $d=7$. Comme   $\Gamma$
contient un z\'ero-cycle de degr\'e 4, le th\'eor\`eme de Riemann-Roch sur une courbe montre
l'existence d'un z\'ero-cycle effectif de degr\'e 4 sur une telle courbe, et donc
aussi sur $X$, ce qui est en contradiction avec l'hypoth\`ese $d$ minimal.

On peut donc supposer $n$ pair, $n \geq 4$. Dans ce cas, on a
$$g= 3 n(n-1)/2 + 1 \leq 3 n(n+1)/2 -2 -8=d-8.$$
Comme   $\Gamma$
contient un z\'ero-cycle de degr\'e 4, le th\'eor\`eme de Riemann-Roch sur une courbe montre
l'existence d'un z\'ero-cycle effectif 
 de degr\'e $d-8$ sur $\Gamma$ et donc sur $X$, et $d-8$ est
congru \`a $2$ modulo 3, ce qui est en contradiction avec l'hypoth\`ese $d$ minimal.
 
\bigskip

Il reste \`a examiner le cas  $d= 3n (n-1)/2+1 $, o\`u l'on a $n \geq 2$ et $d\geq 4$.

On a donc une $k$-surface $X$  avec un point ferm\'e $P$ de degr\'e $d$ premier
\`a 3 minimal, au moins \'egal \`a 4. L'unique entier $n$ tel que
 $$ 3 n(n-1)/2 +1 \leq d < 3 n(n+1)/2 +1$$
 satisfait $3 n(n-1)/2 +1=d$.

On prend un point ferm\'e $M$ de degr\'e 3 sur $X$ 
d\'ecoup\'e par une droite $D$  d\'efinie sur $k$, qu'on peut choisir g\'en\'erale car le corps $k$
est infini.
 Soit $p : Y \to X$ l'\'eclatement de $X$ en le point ferm\'e $M$.
  On note $E \subset Y$ le diviseur exceptionnel et $K$ le faisceau canonique sur $X$.
  
  Le syst\`eme lin\'eaire 
  $\lvert p^*(-K)-E \rvert$
  d\'efinit un morphisme $Y \to  D=\P^1_{k}$, dont les
  fibres sont les sections de $X$ par les plans contenant $D$.
  On a un plongement $Y \subset \P^1_{k} \times \P^3_{k}$ dont la projection sur 
   le premier facteur  est d\'efinie par le syst\`eme lin\'eaire $\lvert p^*(-K)-E \rvert$ et la projection sur 
   le second facteur  est d\'efinie  sur le second facteur
  par 
  $\lvert p^*(-K)  \rvert$.
  Il s'en suit que pour tout couple d'entiers $a \geq 1, b\geq 1$
  le faisceau inversible $a(p^*(-K)-E)  + b p^*(-K)$ est tr\`es ample.
   En particulier, pour $n \geq 3$, le faisceau $p^*(-nK)-2E$ est tr\`es ample.
 Le fait que ces faisceaux inversibles soient tr\`es amples peut aussi s'\'etablir 
 en utilisant \cite[Thm. 1]{R}. 
 
  On consid\`ere sur $Y$ les syst\`emes lin\'eaires   
$\lvert p^*(-nK)-2E \rvert$ pour $n \geq 1$. Ceci correspond aux sections de $X$
par des  surfaces de degr\'e $n\geq 3$ dans $\P^3$, 
 avec une singularit\'e au point ferm\'e $M$, qui est de degr\'e 3.
Imposer une telle singularit\'e correspond \`a 9 conditions lin\'eaires.

\begin{lemma}\label{calculRR3}
Soit $n \geq 3$. On a $h^0(Y, p^*(-nK)-2E) =3 n(n+1)/2 -8$,
le  syst\`eme lin\'eaire 
$\lvert p^*(-nK)-2E \rvert$
 d\'efinit un plongement de la surface $Y$ dans un espace projectif 
de dimension    $3 n(n+1)/2 -9$.  Toute courbe g\'eom\'etriquement connexe et lisse $\Gamma$ 
dans le syst\`eme lin\'eaire associ\'e satisfait
$g(\Gamma)=p_{a}(\Gamma)=    3 n(n-1)/2 -2.$
\end{lemma}
\begin{proof} 
Le faisceau canonique $K_{Y}$ sur $Y$ est $p^*(K)+E$.
Par dualit\'e de Serre, on a 
$$h^2(p^*(-nK)-2E)= h^0(p^*(K)+E + p^*(nK)+2E)= h^0(p^*((n+1)K) +3E)$$
et 
$$h^1(p^*(-nK)-2E)= h^1( p^*((n+1)K) +3E).$$
 L'oppos\'e de $p^*((n+1)K) +3E$ est $p^*(-(n+1)K) -3E$
 qui est la somme de $3(p^*(-K)-E) $  et de $p^*(-mK)$ avec $m \geq 1$,
 et donc est tr\`es ample. Ceci implique d'une part
$ h^0(p^*((n+1)K) +3E)=0$, d'autre part d'apr\`es 
le th\'eor\`eme d'annulation de Kodaira,
  $h^1(p^*((n+1)K) +3E)=0$.   
 En utilisant le th\'eor\`eme de Riemann-Roch sur la surface $Y$, ceci donne
$$h^0(Y, p^*(-nK)-2E) = 3 n(n+1)/2 -8.$$  
La formule $p_{a}(\Gamma)= (\Gamma. \Gamma + K_{Y})/2 +1$ donne 
le calcul du genre de $\Gamma$.
\end{proof}
  
 Pour appliquer le  th\'eor\`eme \ref{fertile}(a), on a  besoin de l'in\'egalit\'e
$$3 n(n-1)/2+1=  d \leq  3 n(n+1)/2 - 9$$
soit $n\geq 20/6$ et donc $n > 3$.
On se restreint donc maintenant  \`a $n \geq 4$.
Comme on~a $d=3n (n-1)/2+1$, ceci \'equivaut \`a
ignorer les cas $d=1$, $d=4$ et $d=10$.

 Le th\'eor\`eme \ref{fertile}(a)   assure
 l'existence sur $Y$ d'une $k$-courbe $\Gamma$
  lisse et g\'eom\'e\-tri\-quement connexe  sur $Y$,
 de genre $g = 3 n(n-1)/2 -2$, contenant un z\'ero-cycle effectif de degr\'e 
$d=3n (n-1)/2+1$.

La courbe $\Gamma$ contient aussi un z\'ero-cycle de degr\'e $3n$,
d\'ecoup\'e par l'image r\'eciproque d'une section plane de $X\subset \P^3_{k}$.
La courbe $\Gamma$ poss\`ede donc un z\'ero-cycle de degr\'e 2.
Elle contient donc un z\'ero-cycle de degr\'e 
$d-2= 3n (n-1)/2-1$, de degr\'e premier \`a 3,
et satisfaisant $d-2 \geq g$.
 Le th\'eor\`eme de Riemann-Roch sur une courbe
assure  qu'il existe sur $\Gamma$,  et donc sur $Y$, et donc sur $X$,
  un z\'ero-cycle effectif
de degr\'e $d-2$, premier \`a $3$, ce qui contredit
l'hypoth\`ese $d$ minimal.

\medskip

On voit donc que l'on a soit $d=1$, soit $d=4$, soit $d=10$.
\end{proof}

\subsection{Surfaces cubiques avec un point rationnel}\label{dp3pointrat}

\begin{thm}\label{soustractionpointdp3} Soit $k$ un corps de caract\'eristique z\'ero. 
Soit $X \subset \P^3_{k}$ une surface cubique lisse poss\'edant un
point rationnel.

(a) Soit $Q\in X(k)$ un point rationnel.
Tout z\'ero-cycle effectif de degr\'e au moins  3 sur $X$
est rationnellement  \'equivalent \`a un z\'ero-cycle
effectif $z_{1}+rQ$ avec $r \geq 0$ et $z_{1}$
effectif de degr\'e au plus 3.

(b) Tout z\'ero-cycle de degr\'e positif ou nul est rationnellement \'equivalent
\`a une diff\'erence $z_{1}-z_{2}$ avec $z_{1}$  effectif
et  $z_{2}$ efffectif de degr\'e au plus 3. 

(c) Tout z\'ero-cycle de degr\'e z\'ero
est  rationnellement \'equivalent \`a la diff\'erence de deux cycles effectifs de degr\'e 3. 

(d) Tout z\'ero-cycle de degr\'e au moins 3 est rationnellement \'equivalent
\`a un z\'ero-cycle effectif ou \`a la diff\'erence d'un z\'ero-cycle effectif
et d'un point ferm\'e de degr\'e 3.

(e) Le groupe de Chow des z\'ero-cycles  sur $X$ est engendr\'e
par les classes des points rationnels et des points ferm\'es de degr\'e 3.

(f) Tout z\'ero-cycle sur $X$ de degr\'e au moins \'egal \`a 10 est rationnellement
\'equivalent \`a un z\'ero-cycle effectif.

\end{thm}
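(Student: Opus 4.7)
The common tool throughout is Theorem \ref{fertile}(b), which applies to $X$ thanks to Proposition \ref{cubiqueRdense} ($R$-density of a smooth cubic surface with a rational point). For any prescribed collection of closed points on $X$, this theorem produces smooth geometrically integral curves $\Gamma$ in a chosen pluri-anticanonical system $|-nK|$ which, on $\Gamma$, carry effective cycles rationally equivalent on $X$ to the prescribed classes. Riemann--Roch on $\Gamma$ (of genus $g_n = 3n(n-1)/2 + 1$) then delivers effective representatives of shifted divisor classes, and the whole theorem is assembled by iterating and combining such reductions.

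The core step is (a), which I would prove by strong induction on $d = \deg_k z$. The case $d = 3$ is trivial (take $r = 0$). For $d \geq 4$, the aim is $z \sim Q + z'$ with $z'$ effective of degree $d-1$, i.e., $|z - Q|_\Gamma \neq \emptyset$ on some curve $\Gamma$ carrying representatives of $z$ and $Q$. When an integer $n$ satisfies both $g_n \leq d-1$ and $d+1 \leq 3n(n+1)/2$, Theorem \ref{fertile}(b) places such representatives on a smooth $\Gamma \in |-nK|$, and Riemann--Roch on $\Gamma$, applied to the degree-$(d-1)$ divisor $z - Q$, yields the desired $z'$. The remaining (``gap'') values of $d$ --- notably $d = 4, 9, 10$, and in general values near the boundaries $3n(n\pm 1)/2$ --- would be handled as in Section \ref{cubique1}: blow up a degree-$3$ closed point $M$ (always available, since $X$ meets a generic $k$-line in $\mathbb{P}^3$ in such a point), work on the blow-up $Y$ with the linear systems $|p^*(-nK) - 2E|$ of smaller genus $3n(n-1)/2 - 2$, and exploit the extra flexibility afforded by the rational point $Q$ to close the remaining gaps.

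Parts (b), (c), (d), (e) then follow formally from (a). For (b), decompose any $z$ of degree $\geq 0$ as $z_+ - z_-$ with disjoint-support effective parts and apply (a) (trivially when the degree is $< 3$); combining the two expressions and using $\deg z \geq 0$ to analyze signs yields $z \sim z_1 - z_2$ with $z_1$ effective and $z_2$ effective of degree $\leq 3$. Part (c) is immediate by padding the two effective sides of a degree-$0$ expression from (b) with copies of $Q$ up to degree exactly $3$; part (e) follows by iterating. Part (d) requires the extra step of realizing the (possibly padded) degree-$3$ negative part as a single closed point of degree $3$; I would achieve this by one more application of Theorem \ref{fertile}(b) with the anticanonical embedding $X \hookrightarrow \mathbb{P}^3$, moving the given degree-$3$ effective cycle within its rational equivalence class onto a smooth plane section (an elliptic curve) on which the resulting divisor class admits a representative supported on a single degree-$3$ closed point.

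Finally, (f) is obtained by first treating the base range $d \in \{10, 11, 12\}$. Given such $z$, (b) gives $z \sim z_1 - z_2$ with $z_1, z_2$ effective and $\deg z_2 \leq 3$; then $\deg z_1 + \deg z_2 \leq d + 6 \leq 18 = h^0(-3K) - 1$, so Theorem \ref{fertile}(b) places the combined support on a smooth $\Gamma \in |-3K|$ of genus exactly $g_3 = 10$. On $\Gamma$, the divisor $z_1 - z_2$ has degree $d \geq 10 = g_3$, so Riemann--Roch produces an effective representative. For $d \geq 13$, a direct induction finishes the proof: $z - Q$ has degree $d - 1 \geq 12$, hence by the inductive hypothesis $z - Q \sim w$ for some effective $w$, whence $z \sim Q + w$ is effective. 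The principal technical obstacles are the gap cases in (a) and the conversion step in (d), both requiring the Coray-style blow-up construction combined with the $R$-density property to gain enough flexibility.
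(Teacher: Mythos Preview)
Your overall architecture is right, and your argument for (f) is actually a clean variant of the paper's. However, two of your key steps do not go through as written.

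\textbf{The gap cases in (a).} Your plan is to reuse the blow-up at a degree-$3$ point $M$ from \S\ref{cubique1}, working in $|p^*(-nK)-2E|$ with genus $3n(n-1)/2-2$ and $h^0=3n(n+1)/2-8$ (valid for $n\geq 3$). A direct check shows this does not cover the smallest gaps: for $d=4$ one would need a curve of genus $\leq 3$, forcing $n\leq 2$, which is outside the range where the system is usable; for $d=9$ and $d=10$ the condition $d+1\leq h^0-1$ fails for $n=3$, and the genus is already too large for $n\geq 4$. The paper proceeds differently and crucially exploits $X(k)\neq\emptyset$: for $d=3n(n-1)/2+1$ it blows up a single \emph{rational} point $R$ (the resulting surface is del Pezzo of degree $2$), obtaining curves of genus $3n(n-1)/2$ with $h^0=3n(n+1)/2-2$, which handles $d=4,10,19,\dots$ (all $n\geq 2$); for $d=3n(n-1)/2$ it blows up \emph{two} rational points (del Pezzo of degree $1$), obtaining genus $3n(n-1)/2-1$ with $h^0=3n(n+1)/2-5$, which handles $d=9,18,\dots$ (all $n\geq 3$). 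Blowing up rational points drops the genus by exactly $1$ or $2$ rather than $3$, which is what the boundary cases require.

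\textbf{The conversion step in (d), and hence (e).} Your plan is to move the degree-$3$ effective cycle onto a smooth plane section $E$ and then pick, within its linear system on $E$, a representative supported on a single degree-$3$ closed point. But that linear system is a $\P^2$ whose members are line sections of the corresponding plane cubic, and there is no reason a $k$-line should cut $E$ in an irreducible degree-$3$ point; over $k=\mathbb{R}$, for instance, every line section decomposes as three real points or one real plus one quadratic point, so no degree-$3$ closed point ever appears, yet (d) must still hold (and then asserts that every cycle of degree $\geq 3$ is rationally equivalent to an effective one). The paper avoids this by a cubic-surface-specific observation: any degree-$2$ closed point $P$ not on a line of $X$ lies on a $k$-secant meeting $X$ residually in a rational point, so $P+p$ is in the standard line-section class $\ell$, which is itself represented by three collinear rational points; and if $P$ lies on a line of $X$, it is rationally equivalent on that line to $2Q$. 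Hence every degree-$2$ point is $a+b+c-d$ with $a,b,c,d$ rational. Combined with (a), this reduces the negative part to rational points (absorbable by iterating (a)) or to a genuine degree-$3$ closed point, yielding (d) and (e).

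Your treatment of (b), (c), and especially (f) is fine; your induction for (f) using the base range $d\in\{10,11,12\}$ on curves in $|{-3K}|$ is correct and arguably tidier than the paper's route through (d).
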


\begin{proof}
On va syst\'ematiquement appliquer le th\'eor\`eme \ref{fertile}(b).
   
   On peut le faire car la propri\'et\'e de $R$-densit\'e vaut
   pour les surfaces cubiques lisses sur tout $k$ de caract\'eristique z\'ero
    (Proposition \ref{cubiqueRdense}).
   
   On pourrait aussi observer que d'apr\`es la proposition  \ref{reducfertile}, 
   pour le th\'eor\`eme
   \`a d\'emontrer, on peut supposer le corps $k$ fertile, ensuite invoquer le fait bien
   connu qu'une surface cubique lisse est g\'eom\'etriquement rationnelle et donc
   g\'eom\'e\-tri\-quement rationnellement connexe, et enfin  appliquer
  le th\'eor\`eme \ref{ratconnexeRdense}.  Cette m\'ethode 
  sera   utile dans l'\'etude des surfaces de del Pezzo de degr\'e 2
  et de degr\'e~1.

 Soit $z$ un z\'ero-cycle effectif  de degr\'e $d\geq 1$.
Soit $n$ le plus petit entier tel que
$d+2 \leq 3n(n+1)/2+1$.
On a donc 
$$ 3n(n-1)/2+1 < d+2$$
soit encore $$ 3n(n-1)/2 \leq d.$$

D'apr\`es le th\'eor\`eme \ref{fertile}(b),
quitte \`a remplacer $z$ par un z\'ero-cycle effectif rationnellement \'equivalent
encore not\'e $z$
et $Q$ par un point rationnel rationnellement \'equivalent encore not\'e $Q$, on peut supposer qu'il existe
 une courbe lisse  g\'eom\'etriquement connexe $\Gamma$ dans le syst\`eme lin\'eaire 
$\lvert -nK \rvert$
contenant  le  z\'ero-cycle  $z$  et  le point rationnel $Q$.
 
On a $g(\Gamma)= p_{a}(\Gamma)=  3n(n-1)/2 +1$.
Si l'on a 
$$  d-1 \geq  3n(n-1)/2 +1,$$
alors le z\'ero-cycle $z-Q$ est rationnellement \'equivalent sur $\Gamma$, donc sur $X$,
\`a un z\'ero-cycle effectif de degr\'e $d-1$.

La condition est satisfaite  sauf si
$$ 3n(n-1)/2 \leq d \leq 3n(n-1)/2 +1.$$

\medskip

Consid\'erons le cas $d =  3n(n-1)/2 +1.$ 
Ici $p_{a}(\Gamma)= d$.
Dans ce cas, on fixe un autre point rationnel $R \in X(k)$, distinct de $Q$,
 non dans le support de $z$, et non situ\'e sur une des droites de $X$,
et on exige
 $$  d+1 +3 +1 \leq   3n(n+1)/2+1.$$
 Ceci est possible si 
 $$ 3n(n-1)/2 + 6 \leq 3n(n+1)/2+1$$
 soit encore $n \geq 2$.

 On consid\`ere l'\'eclatement $p : Y \to X$ en le point $R$, 
 la courbe exceptionnelle
 $E \subset Y$, et le faisceau inversible
 $p^*(-nK)-2E$ sur $Y$.
 La surface $Y$ est une surface de del Pezzo de degr\'e 2. 
 Le faisceau anticanonique sur $Y$ est donn\'e par  $p^*(-K)-E$.
 Il est ample, son double  $p^*(-2K)-2E$ est tr\`es ample.
Le syst\`eme lin\'eaire $\lvert p^*(-K) \rvert$ sur $Y$ correspond
au morphisme $Y \to X$, ceci implique que pour tous entiers $a\geq 0$ et $b \geq 1$,
le faisceau inversible $ap^*(-K)+b(p^*(-K)-E)$ est ample, et que
le faisceau inversible   $ap^*(-K)+2b(p^*(-K)-E)$ est tr\`es ample.
On peut aussi \'etablir ces divers \'enonc\'es de tr\`es-amplitude
 par une application de \cite[Thm. 1]{R}.

Sur la surface $Y$, le th\'eor\`eme de Riemann-Roch pour  le faisceau
$$L=p^*(-nK)-2E= -nK_{Y}+(n-2)E,$$
le th\'eor\`eme de dualit\'e de Serre et  le th\'eor\`eme d'annulation de Kodaira
donnent alors, pour $n\geq 2$,
$$ h^0(Y, p^*(-nK)-2E)=  3n(n+1)/2 -2.$$
Pour $n \geq 2$, le syst\`eme lin\'eaire  $\lvert p^*(-nK)-2E \rvert$ d\'efinit donc un plongement
de  la surface $Y$ dans un espace projectif $\P^N$ avec $N=3n(n+1)/2 -3$,
espace projectif qu'elle engendre. Comme on a $d+1 =  3n(n-1)/2 +2 \leq 3n(n+1)/2 -3$,
le th\'eor\`eme \ref{fertile}(b) assure l'existence dans le syst\`eme lin\'eaire  $\lvert p^*(-nK)-2E \rvert$
d'une courbe $\Gamma \subset Y$ projective, lisse
et g\'eom\'etriquement int\`egre,  et qui contient un z\'ero-cycle effectif $z_{1}$ rationnellement \'equivalent
\`a  $p^*(z)$ sur $Y$  et un point rationnel  $Q_{1}$ rationnellement \'equivalent au point 
  $p^*(Q)$. Le genre de cette courbe est 
$3n(n-1)/2$. 
 Le z\'ero-cycle $z_{1}-Q_{1}$ est de degr\'e $3n(n-1)/2$. Il est donc rationnellement \'equivalent,
 sur $\Gamma$,  et donc sur $Y$, \`a un z\'ero-cycle effectif de degr\'e $d-1$. 
 Le  z\'ero-cycle $p^*(z) -p^*(Q)$ sur $Y$ est donc rationnellement \'equivalent \`a 
 un z\'ero-cycle effectif, et il en est donc de m\^eme de son image directe $z-Q$
 sur $X$.
  
\medskip

Consid\'erons le cas $d =  3n(n-1)/2$ et $p_{a}=d+1$.
On s'int\'eresse au cas $d\geq 4$ et donc $n\geq 3$.

Dans ce cas on va fixer un couple de points rationnels $R$ et $S$ suffisamment g\'en\'eral,
et imposer un point double en chacun de ces points, ce qui impose 6 conditions lin\'eaires
pour le syst\`eme lin\'eaire  $\lvert -nK \rvert$.  Voici comment faire cela formellement.

Soit $p: Y \to X$ l'\'eclat\'e de $X$ en $R$ et $S$, et soient $E_{R} \subset Y$, resp. $E_{S} \subset Y$
les courbes exceptionnelles.  La surface $Y$ est une surface de del Pezzo de degr\'e 1.

Sur cette surface, le faisceau inversible $-K_{Y}= p^*(-K)-E_{R}-E_{S}$ est ample et
 le faisceau  inversible $-3K_{Y}$ est tr\`es ample \cite[Chap. III, Prop. 3.4]{Kol}.

Pour $n \geq 3$,  le faisceau inversible
$$p^*(-nK)-2E_{R}-2E_{S}
=-nK_{Y}+(n-2)E_{R} +(n-2)E_{S}$$
 sur $Y$ est tr\`es ample, comme on voit en utilisant \cite[Thm. 1]{R}.

  En utilisant le th\'eor\`eme de Riemann-Roch   pour le faisceau $p^*(-nK)-2E_{R}-2E_{S}$
  sur $Y$,
   la dualit\'e de Serre
  et le th\'eor\`eme d'annulation de Kodaira, pour $n \geq 3$ on obtient 
  $$h^0(Y, p^*(-nK)-2E_{R}-2E_{S})= 3n(n+1)/2-5.$$

  Pour $n \geq 3$, le syst\`eme lin\'eaire  $\lvert p^*(-nK)-2E_{R}-2E_{S} \rvert$
d\'efinit un plongement de $Y$ dans $\P^N$ avec $N=  3n(n+1)/2-6$,
dont l'image engendre projectivement $\P^N$.

On a
$$ d+1  +1 \leq 3n(n+1)/2-5 $$
c'est-\`a-dire
$$ 3n(n-1)/2   \leq  3n(n+1)/2 -7,$$
puisque l'on a  $n \geq 3$.

D'apr\`es le th\'eor\`eme \ref{fertile}(b), il existe 
 un z\'ero-cycle effectif $z'$  sur $Y$ rationnellement \'equivalent \`a $p^*(z)$ sur $Y$, 
un point rationnel $Q' \in Y(k)$ rationnellement \'equivalent \`a $p^*(Q)$  sur $Y$
et   une courbe $\Gamma \subset Y$ g\'eom\'etriquement int\`egre et lisse
sur $Y$ dans le syst\`eme lin\'eaire  $\vert p^*(-nK)-2E_{R}-2E_{S} \rvert$   qui
  contient le support de $z'$ et  le point $Q'$.
  Le genre de cette courbe est $d-1$, et le z\'ero-cycle $z'-Q'$
 est donc rationnellement \'equivalent sur $\Gamma$  \`a
 un z\'ero-cycle effectif, il en est donc de m\^{e}me pour $z-Q$ sur $X$.
 
 En conclusion, tout z\'ero-cycle $z$ effectif de degr\'e $d$ au moins \'egal \`a 4  sur $X$ est
 rationnellement \'equivalent \`a un z\'ero-cycle $z_{1}+ rQ$,
 avec $z_{1}$ effectif  de degr\'e au plus~3.
 
 Ceci \'etablit le point (a).  On notera que le choix du point rationnel $Q$
 est arbitraire.
 Les points (b) et (c) sont des cons\'equences \'evidentes de (a).
 
 Il y a une classe standard  $\ell$ dans $CH_{0}(X)$ de degr\'e 3,
celle d\'ecoup\'ee par une droite d\'efinie sur $k$
quelconque mais non situ\'ee sur la surface $X$. 
Comme $X$ poss\`ede des points rationnels, et que ces points sont
denses pour la topologie de Zariski,
on peut trouver une telle droite qui d\'ecoupe sur $X$ trois points rationnels distincts.
Si $P$ est un point ferm\'e de degr\'e 2 non situ\'e sur une droite de la surface,
alors la droite qu'elle d\'efinit d\'ecoupe sur $X$ une somme $P+p$ avec
$p$ point rationnel, et $P+p$ est dans la classe $\ell$,
donc \'equivalent \`a la somme de trois points rationnels align\'es.
Si $P$ est situ\'e sur une droite $D$ de la surface, alors 
$P$ est rationnellement \'equivalent sur $D$ donc sur $X$
\`a $2Q$ pour tout point rationnel $Q$ de la droite.
En r\'esum\'e, tout point ferm\'e $P$ de degr\'e 2 sur $X$
est rationnellement \'equivalent \`a un z\'ero-cycle
$a+b+c-d$ avec $a,b,c,d$ points rationnels.
  Les  r\'esultats   (d) et  (e)  s'obtiennent alors  formellement \`a partir de (a).

 D\'emontrons (f). 
  D'apr\`es la proposition \ref{reducfertile}, on peut supposer $k$ fertile.
  Le plus petit entier $d$ pour lequel il existe un entier naturel $n$
 avec
 $ 3n(n-1)/2+1 \leq  d-3$ et $d+3+1  \leq  3n(n+1)/2 +1$
 est $d=13$, qui correspond \`a $n=3$.
 
Consid\'erons  un z\'ero-cycle $z-P$ avec $z$ effectif de degr\'e $d=13$ et $P$ un
point ferm\'e de degr\'e 3. Le th\'eor\`eme \ref{fertile}(b)   montre l'existence
d'un z\'ero-cycle effectif $z'$ rationnellement \'equivalent \`a $z$, d'un
z\'ero-cycle effectif $P'$ de degr\'e 3 rationnellement \'equivalent \`a $P$,
et d'une courbe lisse g\'eom\'etriquement int\`egre $\Gamma \subset X$
dans le syst\`eme lin\'eaire $\lvert -3K \rvert$  de genre
$g= 10$ contenant le support de $z'$ et celui de $Q'$.
Le th\'eor\`eme de Riemann-Roch sur $\Gamma$ assure alors l'existence d'un z\'ero-cycle 
effectif de degr\'e $10$  rationnellement \'equivalent sur $\Gamma$, donc sur $X$,
\`a $z-P$.

Soit  $z$ un z\'ero-cycle quelconque sur $X$ de degr\'e au moins 10.
D'apr\`es (d), soit il est rationnellement \'equivalent
 \`a un z\'ero-cycle effectif, soit il est 
rationnellement \'equivalent \`a une diff\'erence $z_{1}-P$ avec $P$ point ferm\'e
de degr\'e 3 et $z_{1}$  z\'ero-cycle effectif de degr\'e au moins $13$.
D'apr\`es (a), le z\'ero-cycle $z_{1}$ est rationnellement \'equivalent
\`a $z_{2}+rQ$ avec $Q$ point rationnel, $r\geq 0$, et $z_{2}$ z\'ero-cycle
effectif de degr\'e 13. Ainsi $z$ est rationnellement \'equivalent \`a
$rQ+z_{1}-P$ avec $z_{1}$ effectif de degr\'e 13. Et on a vu ci-dessus que, pour un tel $z_{1}$, le z\'ero-cycle $z_{1}-P$
est  rationnellement \'equivalent \`a un z\'ero-cycle effectif.
  \end{proof}

  \begin{rmk}
 Dans \cite{CTC}, pour une surface fibr\'ee en coniques relativement minimale 
 au-dessus de la droite $\P^1_{k}$,    notant $r$ le nombre de fibres
g\'eom\'etriques singuli\`eres de la fibration $X\to \P^1_{k}$,
nous montrons que tout z\'ero-cycle sur $X$ de degr\'e au moins 
${\rm max}(0,  \lfloor{r/2}\rfloor -1)$ est rationnellement \'equivalent \`a un z\'ero-cycle effectif.
Une autre d\'emonstration, plus conceptuelle, fut plus tard obtenue par P. Salberger
\cite{S}. La d\'emonstration de \cite{CTC}  requiert des discussions sur la d\'ecomposition possible
des courbes obtenues dans un syst\`eme lin\'eaire. Il n'est pas clair si 
on pourrait utiliser la m\'ethode du \S \ref{bertinifertile}   pour simplifier cette
d\'emonstration.  
 \end{rmk}
 
 \begin{rmk}
L'analogue du th\'eor\`eme \ref{soustractionpointdp3} est connu pour les surfaces 
de del Pezzo $X$  de degr\'e 4 avec un point rationnel. Dans ce cas on a mieux.
Par \'eclatement
d'un $k$-point non situ\'e sur les droites de $X$, on obtient une surface
cubique $Y$ fibr\'ee en coniques au-dessus de $\P^1_{k}$, avec 5
fibres g\'eom\'etriques d\'eg\'en\'er\'ees. Le th\'eor\`eme de  \cite{CTC} donne alors
 que tout z\'ero-cycle sur $Y$ de degr\'e au moins 1 est rationnellement
\'equivalent \`a un z\'ero-cycle effectif. Ceci vaut donc aussi pour une surface de del Pezzo $X$ de degr\'e 4
poss\'edant un point rationnel 
(l'existence d'un tel point suffit pour que les point rationnels soient denses pour la topologie
de Zariski sur $X$).
 \end{rmk}

\section{Surfaces de del Pezzo de degr\'e 2}\label{dp2}
  
\subsection{Surfaces de del Pezzo de degr\'e 2 avec un z\'ero-cycle de degr\'e 1}\label{137}

On suit la m\'ethode de Coray pour les surfaces cubiques  \cite{C1},
 avec la flexibilit\'e donn\'ee par le th\'eor\`eme
\ref{fertile}(a).

  \begin{thm}\label{DP2}
  Soient $k$ un corps de caract\'eristique z\'ero et $X$ une $k$-surface de del Pezzo de degr\'e 2.
Si $X$ poss\`ede un z\'ero-cycle de degr\'e 1, elle poss\`ede un point ferm\'e de degr\'e $1$, ou $3$, ou $7$.
 \end{thm}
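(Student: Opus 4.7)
La démonstration suit la méthode de Coray pour les surfaces cubiques (\S\ref{cubique1}), en remplaçant le degré $3$ par le degré $2$ et en utilisant la flexibilité du théorème \ref{fertile}(a). Les invariants clés de la surface de del Pezzo de degré $2$ sont $K_X^2 = 2$, et via Riemann-Roch sur $X$ combiné à la dualité de Serre et au théorème d'annulation de Kodaira, $h^0(X, -nK_X) = n(n+1) + 1$ pour $n \geq 1$, fournissant un plongement $X \hookrightarrow \mathbb{P}^{n(n+1)}$. Toute courbe $\Gamma \in |-nK_X|$ lisse et géométriquement intègre a pour genre $g_n = n(n-1) + 1$ (formule d'adjonction) et contient un zéro-cycle effectif de degré $2n$ découpé par une section de $|-K|$. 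De plus, $X$ possède toujours des zéro-cycles effectifs de degré $2$, par exemple les fibres du revêtement double anticanonique $X \to \mathbb{P}^2_k$ au-dessus des $k$-points de $\mathbb{P}^2$. Par la proposition \ref{reducfertile}, on peut supposer $k$ fertile pour appliquer le théorème \ref{fertile}(a).

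Soit $d$ le degré minimal d'un point fermé de degré impair sur $X$ ; on veut $d \in \{1,3,7\}$. Supposons $d \geq 5$ et soit $n \geq 2$ l'unique entier vérifiant $g_n = n(n-1) + 1 \leq d \leq n(n+1) - 1$ (les valeurs paires de cet intervalle sont exclues). Dans le cas intermédiaire $g_n + 2 \leq d \leq n(n+1) - 3$, le théorème \ref{fertile}(a) fournit une courbe $\Gamma \in |-nK|$ lisse et géométriquement intègre contenant des zéro-cycles effectifs $z_P$ de degré $d$ et $z_2$ de degré $2$, rationnellement équivalents respectivement au point $P$ et à un point fermé de degré $2$ (la condition $d + 2 \leq n(n+1)$ est satisfaite). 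Le zéro-cycle $z_P - z_2$ sur $\Gamma$ est de degré impair $d - 2 \geq g_n$, donc, par Riemann-Roch sur $\Gamma$, rationnellement équivalent à un zéro-cycle effectif de degré $d - 2 < d$, ce qui contredit la minimalité de $d$.

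Restent deux types de cas frontière à traiter. Pour le bord inférieur $d = g_n$ avec $n \geq 4$ (ce qui exclut $d = 13, 21, \dots$), on éclate un point fermé générique $M$ de degré $2$, soit $p : Y \to X$ avec diviseur exceptionnel $E$ ; un calcul d'intersection analogue à celui du lemme \ref{calculRR3} montre que les courbes lisses dans $|p^*(-nK) - 2E|$ ont pour genre $g_n - 2 = n^2 - n - 1$, et la très-amplitude de ce fibré pour $n$ assez grand se vérifie par le critère de Reider (\cite[Thm. 1]{R}). Le théorème \ref{fertile}(a) fournit alors une courbe $\Gamma \subset Y$ lisse géométriquement intègre contenant un représentant de $\tilde P = p^*(P)$ et un cycle auxiliaire de petit degré ; combinés aux zéro-cycles automatiques sur $\Gamma$ (intersection avec $p^*(-K)$ de degré $2n$ et avec $E$ de degré $4$), Riemann-Roch sur $\Gamma$ produit un zéro-cycle effectif de degré impair strictement inférieur à $d$. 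Pour le bord supérieur $d = n(n+1) - 1$ (en particulier $d = 5$ pour $n = 2$ et $d = 11$ pour $n = 3$), l'obstacle principal est qu'on ne peut imposer simultanément $z_P$ et un $z_2$ dans $|-nK|$. On s'en sort en utilisant soit le système $|-(n+1)K|$, soit un système obtenu par éclatement, ce qui fournit la dimension voulue ; on combine alors astucieusement les zéro-cycles disponibles sur la courbe obtenue (notamment les intersections automatiques) via Riemann-Roch pour produire un zéro-cycle effectif de degré impair $<d$. Au terme de cette enumération, seules subsistent les valeurs $d \in \{1, 3, 7\} = \{g_1, g_2, g_3\}$.
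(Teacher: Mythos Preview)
Your overall architecture matches the paper's proof, and the intermediate case $g_n + 2 \le d \le n(n+1) - 3$ is handled correctly. (Minor slip: in that case you appeal to rational equivalence, which is part~(b) of Theorem~\ref{fertile}; part~(a) only gives effective cycles of the prescribed degrees, but that is all you need here.) The lower boundary $d = g_n$ is treated in the same way as the paper --- blow up a degree~$2$ point, use $|p^*(-nK) - 2E|$ --- though your phrasing ``ce qui exclut $d = 13, 21, \ldots$'' is inverted: those are precisely the values you \emph{handle} for $n \ge 4$, leaving $d \in \{g_1, g_2, g_3\} = \{1, 3, 7\}$.

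The genuine gap is the upper boundary $d = n(n+1) - 1$. You claim the obstacle is that one cannot impose both $z_P$ and a $z_2$ inside $|{-nK}|$, and then gesture at passing to $|{-(n+1)K}|$ or to a blow-up, combining cycles ``astucieusement''. But $|{-(n+1)K}|$ has genus $g_{n+1} = n^2 + n + 1 > d$, so Riemann--Roch does not directly produce an effective odd-degree cycle smaller than $d$; and the blow-up systems have even smaller $h^0$, making the fit worse. No concrete argument is supplied, so as written the cases $d = 5, 11, 19, \ldots$ remain open.

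The paper's solution is much simpler, and you already have the key ingredient in hand from your own setup. Drop the auxiliary degree~$2$ cycle entirely and apply Theorem~\ref{fertile}(a) with the single closed point of degree $d$ (the condition $d \le n^2+n$ is satisfied). The resulting smooth $\Gamma \in |{-nK}|$ automatically carries a zero-cycle of degree $2n$ cut out by $|{-K}|$. Since $\gcd(n^2+n-1,\,2n) = 1$, the curve $\Gamma$ possesses a zero-cycle of degree~$1$, and Riemann--Roch then yields an effective zero-cycle of degree $g_n = n^2-n+1$, which is odd and strictly less than $d$ as soon as $n \ge 2$. This disposes of all upper-boundary values except $d=1$.
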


\begin{proof}

Une telle surface $X$ poss\`ede des points dans des extensions quadratiques
du corps de base $k$,  puisque c'est un rev\^etement double de ${\bf P}^2_{k}$,
donn\'e par le syst\`eme lin\'eaire  associ\'e \`a $-K$.
Soit $Q$ un point de degr\'e 2 sur $X$. Supposons donn\'e un point ferm\'e de degr\'e
$d$ impair. On peut supposer $d$ minimal avec cette propri\'et\'e.
Si $d=1$, on a un point rationnel. Supposons donc $d \geq 3$.

 D'apr\`es la proposition \ref{reducfertile}, on peut supposer le corps $k$ fertile.
 Les seuls faisceaux inversibles  \'evidents sur $X$ sont les faisceaux 
$-nK$.
Ils sont
amples pour $n \geq 1$, et tr\`es amples pour $n \geq 2$, par exemple par \cite[Thm. 1]{R}.
 Soit $n \geq 2$. 
 On a 
  $$h^2(nK)=h^0((1+n)K)=0,$$ car le faisceau inversible $-K$ est ample.
  Comme $-K$ est ample, on a $$h^1(nK)= h^1((1+n)K)=0$$ d'apr\`es le th\'eor\`eme d'annulation de Kodaira.
  Le th\'eor\`eme de Riemann-Roch sur la surface $X$ donne alors
$$h^0(-nK) = (-nK. (-nK-K))/2 + 1 = n^2+n+1.$$
Pour tout entier $n\geq 1$, le faisceau inversible  $-nK$ est ample et  ses sections d\'efinissent un morphisme
$X \to \P^{n^2+n}_{k}$ d'image de dimension au moins 2,  engendrant projectivement 
$\P^{n^2+n}_{k}$.

Pour $\Gamma$ une courbe projective et lisse dans le syt\`eme lin\'eaire $\lvert -nK \rvert$, on a
$$g(\Gamma)=p_{a}(\Gamma)= (-nK. -nK+K)/2+1= n^2-n+1.$$
Une telle courbe $\Gamma$ contient un z\'ero-cycle (effectif) de degr\'e $(-nK).(-K)= 2n$
obtenu par intersection avec l'image r\'eciproque d'une droite de $\P^2_{k}$.

Notons  $(n+1)^2 -(n+1)+1 = n^2+n+1$.
Soit $n\geq 1$ l'unique entier tel que
$$g=n^2-n+1 \leq d < n^2+n+1.$$

Supposons d'abord 
$$g= n^2-n+1 < d  \leq n^2+n-2.$$

Comme on a
$d+2 \leq n^2+n$,
le th\'eor\`eme \ref{fertile}(a) garantit l'existence d'une courbe $\Gamma$
projective, lisse, g\'eom\'etriquement connexe dans le syst\`eme lin\'eaire 
$\lvert -nK \rvert$, 
poss\'edant  un z\'ero-cycle effectif
 de degr\'e $d$
et   un point de degr\'e 2.  
Comme $d$ est impair, on n'a pas $d=n^2-n+2$, donc on a $n^2-n+3 \leq d$
et $d-2 \geq g$. Le th\'eor\`eme de Riemann-Roch sur la courbe $\Gamma$
assure l'existence d'un z\'ero-cycle effectif  de degr\'e $d-2$ sur $\Gamma$, donc sur $X$, 
ce qui est une contradiction avec l'hypoth\`ese $d$ minimal.

On ne  peut avoir $d=n^2+n$, car $d$ est impair.
Il reste donc \`a consid\'erer les cas $d=n^2+n-1$ 
et $d=n^2-n+1=g$.
  
 Consid\'erons le cas $d=n^2+n-1$. Le th\'eor\`eme  
\ref{fertile}(a) \'etablit l'existence d'une courbe $\Gamma$ projective, lisse, g\'eom\'etriquement connexe
sur $k$ de genre $g=n^2-n+1$ contenant un 
z\'ero-cycle effectif
 de degr\'e $d=n^2+n-1$.
Comme on a remarqu\'e ci-dessus, cette courbe contient aussi un z\'ero-cycle de degr\'e $2n$.
Comme $n^2+n-1$ et $2n$ sont premiers entre eux, cette courbe poss\`ede un z\'ero-cycle de degr\'e 1.
Par le th\'eor\`eme de Riemann-Roch sur la courbe  $\Gamma$, 
elle poss\`ede un z\'ero-cycle effectif de degr\'e $n^2-n+1$.
On a $n^2-n+1 < n^2+n-1$ si et seulement si $n>1$, i.e. $d \geq 5$.  Si donc {\it $d$
n'est pas \'egal \`a $3$}, on trouve sur $\Gamma$ et donc sur $X$ 
un z\'ero-cycle effectif de degr\'e impair plus petit que $d$,   ce qui est une contradiction avec
l'hypoth\`ese $d$ minimal.

Il reste \`a consid\'erer le cas $g=n^2-n+1 = d$. 
On a un point ferm\'e de degr\'e $d\geq 3$. Comme $k$ est fertile,
on peut choisir un point $k$-rationnel g\'en\'eral  $m$ dans $\P^2_{k}$
et son image r\'eciproque $M$ par le morphisme $X \to \P^2_{k}$.
C'est un point ferm\'e de degr\'e 2. 
On consid\`ere $p: Y \to X$ l'\'eclat\'e de $X$ en le point $M$,
on note $E \subset Y$ le diviseur exceptionnel, et on consid\`ere
sur $Y$ le syst\`eme lin\'eaire 
$\lvert p^*(-nK)-2E \rvert$. 
Ses sections correspondent aux courbes du syst\`eme lin\'eaire  
 $\lvert -nK \rvert$ 
sur $X$ qui ont un point double en le point ferm\'e $M$,
ce qui impose 6 conditions lin\'eaires.
On a donc
 $h^0(Y, p^*(-nK)-2E) \geq  n^2+n-5$.
 
 \begin{lemma}\label{calculRR2}
 Pour $n \geq 3$, le faisceau inversible
 $p^*(-nK)-2E$ est
 tr\`es ample, et l'on a  $h^0(Y, p^*(-nK)-2E) =  n^2+n-5$.
 \end{lemma}
 \begin{proof} Soit $D\simeq \P^1_{k}$ la droite param\'etrant les droites de $\P^2_{k}$
 passant par $m$. 
  \`A tout point de $X$ non au-dessus de $m$ on associe sa projection
dans $\P^2_{k}$ puis le point de $D$ correspondant \`a la droite
joignant cette projection \`a $m$. L'application rationnelle de $X$
vers $D$ ainsi d\'efinie s'\'etend en un morphisme $Y \to D$
dont le syst\`eme lin\'eaire associ\'e est donn\'e par le faisceau inversible $p^*(-K)-E$.
On sait que le faisceau inversible $-2K$ sur $X$ est tr\`es ample,
d\'efinissant un plongement $X \subset \P^N$.
On a un  plongement 
$$Y  \hookrightarrow (\P^1 \times X) \hookrightarrow ( \P^1 \times \P^N )$$
d\'efini  par  
 $p^*(-K)-E$ pour la projection vers  $\P^1$ et par   $p^*(-2K) $  pour la projection vers $X \subset \P^N$.
 Il s'en suit que pour tout couple d'entiers $a \geq 1, b\geq 1$
  le faisceau inversible $a(p^*(-K)-E)  + b p^*(-2K)$ est tr\`es ample.
  
  En particulier, pour $n=4$, le faisceau  inversible $p^*(-nK)-2E$ est tr\`es ample sur $Y$,
  et comme $p^*(-K)$ correspond \`a un morphisme $Y \to X \to \P^2_{k}$, 
  ceci implique que pour tout $n \geq 4$, le faisceau inversible 
  $p^*(-nK)-2E$ est tr\`es ample sur $Y$.
  
  Proc\'edant comme dans le lemme \ref{calculRR3}, pour $n \geq 4$, on montre  $$h^1(Y, p^*(-nK)-2E)=0, \
 h^1(Y, p^*(-nK)-2E)=0,$$ 
  puis $h^0(Y, p^*(-nK)-2E) =  n^2+n-5$.
 \end{proof}
 
Si l'on a $d\leq n^2+n-6$, c'est-\`a-dire $n^2-n+1 \leq n^2+n-6$,
c'est-\`a-dire $n >3$, c'est-\`a-dire si {\it on exclut   $d=3$ et $d=7$},
le th\'eor\`eme \ref{fertile}(a) assure l'existence
d'une courbe $\Gamma$ g\'eom\'etriquement connexe et lisse dans le syst\`eme lin\'eaire 
$\lvert p^*(-nK)-2E \rvert$ 
 sur $Y$
  contenant un z\'ero-cycle effectif
 de degr\'e $d=n^2-n+1$. 
Cette courbe satisfait  
$g(\Gamma)=p_{a}(\Gamma)= n^2-n-1$. Elle contient
  un z\'ero-cycle de degr\'e $2n$. Comme  
$d=n^2-n+1$ et $2n$ sont premiers entre eux, $\Gamma$ contient un z\'ero-cycle de degr\'e 1.
Par le th\'eor\`eme de Riemann-Roch sur $\Gamma$, elle contient un z\'ero-cycle effectif
de degr\'e $n^2-n-1$, impair et strictement plus petit que $d=n^2-n+1$, 
 ce qui est une contradiction avec
l'hypoth\`ese $d$ minimal.

N'ont donc \'et\'e exclus de ce processus de descente des degr\'es impairs que les  
 degr\'es $1$, $3$, ou $7$.
 \end{proof}

\begin{rmk}\label{contrexKM}
Comme annonc\'e dans  \cite[Remark19]{KM}, pour une surface de del Pezzo de degr\'e 2,
on ne peut exclure la possibilit\'e d'existence d'un point de degr\'e~3 en l'absence de point rationnel.
Je d\'etaille ici  l'argument qui m'a \'et\'e indiqu\'e par J.~Koll\'ar.
Sur un corps $k$ convenable de caract\'eristique z\'ero, on peut trouver dans $\P^2_{k}$
une conique lisse $C(u,v,w)=0$ et une quartique lisse $Q(u,v,w)=0$ dont l'intersection consiste en la r\'eunion
d'un point ferm\'e  de corps r\'esiduel $K$ degr\'e 3 sur $k$ et d'un point ferm\'e  de corps r\'esiduel $L$ de degr\'e 5 sur $k$.
 En particulier cette intersection
ne contient pas de point rationnel.

Soit $F=k(t)$ le corps des fonctions rationnelles en une variable. La quartique de $\P^2_{F}$ d\'efinie par $aC(u,v,w)^2-tQ(u,v,w)=0$ est lisse, car elle se
sp\'ecialise en $t=\infty$  en une quartique lisse. On consid\`ere la surface de del Pezzo $X$ de degr\'e 2 sur $F$ d\'efinie par l'\'equation multihomog\`ene $$z^2-aC(u,v,w)^2+tQ(u,v,w)=0.$$
Supposons qu'elle ait un point sur $F$. Par  congruences modulo $t$, 
on voit que l'on devrait avoir une solution non triviale pour $C(u,v,w)=0=Q(u,v,w)$ dans $k$,
ce qui n'est pas.  Ainsi $X(F) = \emptyset$. Il est par contre clair que $X$
poss\`ede un point sur l'extension cubique $K(t)/F$ et un  point sur l'extension quintique $L(t)/F$,
 avec $C(u,v,w)=0=Q(u,v,w)$ et $z=0$.
 
On peut aussi faire des variantes avec $F=k((t))$ le corps des s\'eries formelles.
 Dans la situation parall\`ele des surfaces fibr\'ees en coniques sur $\P^1_{F}$
avec 6 fibres g\'eom\'etriques d\'eg\'en\'er\'ees, des exemples analogues avec $F$
un corps $p$-adique avaient \'et\'e construits dans \cite[\S 5]{CTC}.

\end{rmk}

\subsection{Surfaces de del Pezzo de degr\'e 2 avec un point rationnel}\label{dp2pointrat}

\begin{thm}\label{soustractionpointdp2}
Soient $k$ un corps de caract\'eristique z\'ero et $X$ une surface de del Pezzo de degr\'e 2 sur $k$ poss\'edant un
point rationnel.
 
(a) Soit $Q\in X(k)$ un point rationnel.
Tout z\'ero-cycle effectif de degr\'e au moins 6 sur $X$
est rationnellement  \'equivalent \`a un z\'ero-cycle
effectif $z_{1}+rQ$ avec $r \geq 0$ et $z_{1}$
effectif de degr\'e au plus 6.

(b) Tout z\'ero-cycle de degr\'e positif ou nul est rationnellement \'equivalent
\`a une diff\'erence $z_{1}-z_{2}$ avec $z_{1}$  effectif
et  $z_{2}$ efffectif de degr\'e au plus 6. 

(c) Tout z\'ero-cycle de degr\'e z\'ero
est  rationnellement \'equivalent \`a la diff\'erence de deux cycles effectifs de degr\'e 6. 

(d) Tout z\'ero-cycle de degr\'e au moins \'egal \`a 43 est rationnellement
 \'equivalent \`a un z\'ero-cycle effectif.
\end{thm}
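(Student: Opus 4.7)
The proof will follow the template of Theorem~\ref{soustractionpointdp3}, replacing the numerical data of the cubic surface with that of a del Pezzo surface of degree $2$. Concretely, for $n \geq 1$ on such an $X$ one has $h^{0}(-nK_{X}) = n^{2}+n+1$ (by Riemann--Roch, Kodaira vanishing and Serre duality, using that $-K_{X}$ is ample), the linear system $\lvert -nK_{X}\rvert$ defines a $k$-morphism to $\mathbf{P}^{n^{2}+n}$ whose image has dimension $2$ for $n \geq 1$, it is a closed immersion for $n \geq 2$, and smooth irreducible members $\Gamma \in \lvert -nK_{X}\rvert$ have arithmetic genus $g(\Gamma)=n^{2}-n+1$. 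The main tool is Theorem~\ref{fertile}(b), which is available since a smooth del Pezzo surface with a rational point is $k$-unirational (hence satisfies the $R$-density property), as one may alternatively reduce to the fertile case by Proposition~\ref{reducfertile} and apply Theorem~\ref{ratconnexeRdense}.

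For part~(a), given effective $z$ of degree $d \geq 7$ and a rational point $Q$, the plan is to pick the smallest $n$ with $d+1 \leq n^{2}+n$, apply Theorem~\ref{fertile}(b) to the system $\lvert -nK_{X}\rvert$ to find a smooth geometrically integral curve $\Gamma$ through cycles equivalent to $z$ and to $Q$, and then apply Riemann--Roch on $\Gamma$: as soon as $d-1 \geq g(\Gamma)=n^{2}-n+1$, the cycle $z-Q$ is equivalent to an effective cycle of degree $d-1$, and one iterates down to $\deg \leq 6$. The two borderline values $d = n^{2}-n$ and $d = n^{2}-n+1$ (for which $|-nK|$ does not suffice) are handled, as in the cubic case and in Section~\ref{137}, by replacing $X$ by its blow-up $p\colon Y \to X$ at one or two rational points and working instead with $\lvert p^{*}(-nK_{X}) - 2E \rvert$ or $\lvert p^{*}(-nK_{X}) - 2E_{1} - 2E_{2}\rvert$; the requisite very-ampleness and cohomology computations are done exactly as in Lemmas~\ref{calculRR3} and~\ref{calculRR2} (Kodaira vanishing plus \cite[Thm.~1]{R}), giving curves of smaller genus that still meet the Bertini bound. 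Parts~(b) and~(c) then follow formally: applying~(a) to both terms of an effective/effective decomposition $z = z_{+}-z_{-}$ and using $\deg(z_{+}) \geq \deg(z_{-})$ allows one to move copies of $Q$ between the two effective parts, so that the residual "negative" effective term can be chosen of degree $\leq 6$; normalising both degrees to $6$ by adding multiples of $Q$ proves~(c).

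The substantial new content is part~(d). The idea is to prove the following key lemma: for some integer $D_{0}$, and for every $t \in \{0,1,\dots,6\}$, every effective cycle $w$ of degree $D_{0}$ and every effective cycle $z_{2}$ of degree $t$ satisfy $w - z_{2} \equiv w'$ for some effective $w'$. Granting this with $D_{0}=43$, the theorem is concluded as follows: for $z$ of degree $d \geq 43$, part~(b) gives $z \equiv z_{1} - z_{2}$ with $z_{2}$ effective of degree $t \leq 6$ and $z_{1}$ effective of degree $d+t \geq 43$; part~(a), combined with the freedom to shift copies of $Q$ in and out of the "residual" effective summand, lets us rewrite $z_{1} \equiv w + rQ$ with $w$ effective of degree exactly $43$ and $r \geq 0$; the lemma then produces an effective $w'$ with $w - z_{2} \equiv w'$, whence $z \equiv w' + rQ$ is effective. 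To prove the key lemma, for each fixed $t$ one chooses a suitable linear system --- either $\lvert -nK_{X} \rvert$ on $X$ itself or $\lvert p^{*}(-nK_{X}) - \sum_{i} m_{i}E_{i}\rvert$ on a blow-up $Y \to X$ at a small number of rational points with multiplicities $m_{i} \geq 2$ --- whose smooth members $\Gamma$ have genus $g \leq 43 - t$ and whose projective embedding has dimension $N \geq 43 + t$. Theorem~\ref{fertile}(b) then yields a smooth geometrically integral $\Gamma$ containing representatives of $w$ and $z_{2}$, and Riemann--Roch on $\Gamma$ shows $w - z_{2}$ is equivalent to an effective cycle.

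The main obstacle is precisely this last numerical step: unlike for cubic surfaces, where a single choice ($n=3$ and $t \leq 3$) accommodates every residual $z_{2}$, in the del Pezzo degree $2$ case the genus of $\lvert -nK_{X}\rvert$ grows too fast relative to its dimension for a uniform direct choice to work for all $t \leq 6$. One must therefore examine each $t$ separately and, for $t \geq 2$, introduce one or two blow-ups with carefully chosen multiplicities (possibly strictly greater than $2$) so that the inequalities $g+t \leq 43$ and $N - t \geq 43$ hold simultaneously. Verifying that the resulting invertible sheaves on $Y$ are very ample, and that their higher cohomology vanishes so the Riemann--Roch computation on the surface gives the stated $h^{0}$, is done by the same mixture of Serre duality, Kodaira vanishing, and the explicit criterion of Reider \cite[Thm.~1]{R} used in Lemmas~\ref{calculRR3} and~\ref{calculRR2}. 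The value $43$ in the statement is the smallest integer for which this combinatorial optimisation can be made to succeed for every $t \leq 6$.
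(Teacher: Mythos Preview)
Your outline for (a)--(c) matches the paper's approach closely, with one caveat: you assert that a del Pezzo surface of degree~$2$ with a rational point is $k$-unirational, hence satisfies $R$-density. This is \emph{not} known in general (the paper says so explicitly), so you must rely on the reduction to the fertile case via Proposition~\ref{reducfertile} together with Theorem~\ref{ratconnexeRdense}; you do mention this as an alternative, so no harm done, but drop the unirationality claim.

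For part (d), however, you have made the argument harder than necessary and your ``key lemma'' with $D_{0}=43$ and variable $t\in\{0,\dots,6\}$ is problematic. Take $t=6$: you need a curve $\Gamma$ through cycles of total degree $43+6=49$ with $g(\Gamma)\leq 43-6=37$. In $\lvert -nK\rvert$ this forces $n^{2}+n\geq 49$ (so $n\geq 7$) and $n^{2}-n+1\leq 37$ (so $n\leq 6$), a contradiction; blowing up at $m$ rational points with multiplicity~$2$ gives $g=43-m$ and $N=56-3m$ for $n=7$, and the two constraints $m\geq t$ and $3m\leq 13-t$ are compatible only for $t\leq 3$. Higher multiplicities might rescue $t=4,5,6$, but you have not carried this out, and the very-ampleness verifications become delicate.

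The paper avoids this entirely by a simple normalisation you overlooked: after writing $z\equiv z_{1}-z_{2}$ with $z_{2}$ effective of degree $\leq 6$, add $(6-\deg z_{2})Q$ to \emph{both} $z_{1}$ and $z_{2}$ so that $z_{2}$ has degree \emph{exactly}~$6$. Then $z_{1}$ has degree $\deg(z)+6\geq 49$, and by (a) one may take $z_{1}$ of degree exactly $49$ plus a multiple of $Q$. Now a \emph{single} application of Theorem~\ref{fertile}(b) to $\lvert -7K\rvert$ (with $h^{0}=57$, hence $N=56\geq 49+6$, and genus $43$) puts both $z_{1}$ and $z_{2}$ on a smooth curve $\Gamma$ of genus $43$, and Riemann--Roch on $\Gamma$ makes $z_{1}-z_{2}$ (of degree $43=g$) equivalent to an effective cycle. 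No blow-ups, no case analysis in $t$. This is exactly parallel to what the paper does for cubic surfaces in part~(f) of Theorem~\ref{soustractionpointdp3}, where $z_{2}$ is normalised to degree exactly~$3$ and $n=3$ suffices. The bound $43$ arises simply as the genus $7^{2}-7+1$ of a smooth member of $\lvert -7K\rvert$, where $n=7$ is the least integer for which $n^{2}+n\geq (n^{2}-n+1)+6+6$.
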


\begin{proof}
On va syst\'ematiquement appliquer le th\'eor\`eme \ref{fertile}(b).
\`A la diff\'e\-rence du cas des surfaces cubiques lisses (Th\'eor\`eme \ref{soustractionpointdp3}),
 en pr\'esence
d'un $k$-point sur la surface de del Pezzo de degr\'e 2, 
la $k$-unirationalit\'e et la propri\'et\'e de densit\'e ne sont pas
connues dans tous les cas \cite{Ma,STVA}. En outre, pour ces surfaces,
on n'a pas \'etudi\'e la propri\'et\'e de  $R$-densit\'e.
 On va donc utiliser ici la proposition \ref{reducfertile},
  qui permet de supposer le corps $k$ fertile,
 et  le th\'eor\`eme  \ref{ratconnexeRdense}.

  Soit $n \geq 1$.  On a $h^0(-nK)=n^2+n+1$, et si $\Gamma$ est une courbe g\'eom\'etriquement connexe lisse
   dans le syst\`eme lin\'eaire $\vert -nK \vert$, alors $g(\Gamma)=p_{a}(\Gamma)= n^2-n+1$.
 Pour tout $n \geq 1$, le syst\`eme lin\'eaire $\vert -nK\vert$ d\'efinit un morphisme de $X$
 dans un espace projectif d'image de dimension au moins 2. Pour $n\geq 2$, c'est un plongement.
 
 Soit $z$ un z\'ero-cycle effectif  de degr\'e $d\geq 1$.
Soit $n$ le plus petit entier tel que
$d+2 \leq n^2+n+1$. On a   $ n^2-n \leq  d.$

D'apr\`es le th\'eor\`eme \ref{fertile}(b),
quitte \`a remplacer le z\'ero-cycle effectif $z$ par un z\'ero-cycle effectif rationnellement \'equivalent
encore not\'e $z$
et $Q$ par un point rationnel rationnellement \'equivalent encore not\'e $Q$,  
comme on a $h^0(-nK) \geq d+2$,
on peut supposer qu'il existe
 une courbe lisse  g\'eom\'etriquement connexe $\Gamma$ dans le syst\`eme lin\'eaire 
$\lvert -nK \rvert$
contenant  le  z\'ero-cycle  $z$  et  le point rationnel $Q$.

Si l'on a $n^2-n+1 \leq d-1$, alors   le z\'ero-cycle $z-Q$ est rationnellement \'equivalent sur $\Gamma$, donc sur $X$, \`a un z\'ero-cycle effectif de degr\'e $d-1$.

La condition est satisfaite sauf si
$$n^2-n \leq d \leq n^2+n+1.$$

\medskip

Consid\'erons le cas $d =n^2-n+1$. 
 Ici $p_{a}(\Gamma)=d.$
Dans ce cas, on fixe un autre point rationnel $R \in X(k)$, distinct de $Q$,
 non dans le support de $z$, et  situ\'e ni sur une des courbes exceptionnelles de $X$
 ni sur le lieu de ramification du rev\^{e}tement double $X \to \P^2_{k}$ d\'efini par
 le syst\`eme lin\'eaire $\vert -K \vert$.
 Quitte \`a remplacer par des cycles effectifs rationnellement \'equivalents,
on cherche une courbe $\Gamma$ g\'eom\'etriquement int\`egre dans le syst\`eme lin\'eaire
$\vert -nK \vert$ contenant le point $Q$, le support de $z$, et poss\'edant un $k$-point
double en $R$,  
pour faire baisser le genre g\'eom\'etrique de 1.
On veut donc
$$ d+1+3 +1 \leq n^2+n+1,$$
avec $d =n^2-n+1$, 
soit  $n > 2 $ et  $d >3$.

Voici comment faire cela pr\'ecis\'ement.
On consid\`ere l'\'eclatement $p : Y \to X$ en le point $R$, 
 la courbe exceptionnelle
 $E \subset Y$, et le faisceau inversible
 $p^*(-nK)-2E$ sur $Y$.
 La surface $Y$ est une surface de del Pezzo de degr\'e 1. 
D'apr\`es le lemme \ref{calculRR2},
pour tout couple d'entiers $a \geq 1, b\geq 1$
  le faisceau inversible $a(p^*(-K)-E)  + b p^*(-2K)$ est tr\`es ample sur $Y$.
  Ainsi pour tout $n\geq 3$, le faisceau inversible $p^*(-nK)-2E$ 
  est tr\`es ample sur $Y$. On applique ensuite 
  le th\'eor\`eme \ref{fertile}(b) \`a $Y$,
  au plongement de $Y$ d\'efini par $p^*(-nK)-2E$
  au point $Q_{1}=p^{-1}(Q)$ et au z\'ero-cycle effectif $z_{1}=p^*(z)$.
  On trouve ainsi une courbe $\Gamma_{1} \subset Y$ g\'eom\'etriquement connexe, lisse,
  et contenant un $k$-point $Q_{2}$ rationnellement \'equivalent \`a $Q_{1}$ sur $Y$
  et un z\'ero-cycle effectif $z_{2}$ rationnellement \'equivalent \`a $z_{1}$ sur $Y$.
En utilisant le th\'eor\`eme de Riemann-Roch sur $Y$, on montre  $p_{a}(\Gamma_{1})=n^2-n=d-1$.
Par Riemann-Roch sur la courbe $\Gamma_{1}$, on trouve un z\'ero-cycle effectif $z_{3}$
rationnellement \'equivalent sur $\Gamma_{1}$  \`a $z_{2}-Q_{2}$, donc rationnellement \'equivalent
\`a $z_{1}-Q_{1}$ sur $Y$. Alors le z\'ero-cycle effectif $p_{*}(z_{3})$ de degr\'e $d-1$
est rationnellement \'equivalent \`a $z-Q$ sur $X$.

\medskip

Consid\'erons le cas $d =n^2-n$. 
On a $p_{a}(\Gamma)=d+1.$
  Dans ce cas, choisissons un couple de $k$-points \'etrangers au support de $z$,
  \`a $Q$, aux courbes exceptionnelles de premi\`ere esp\`ece sur $X$
  et au lieu de ramification.
 
  Quitte \`a remplacer  $Q$ et $z$ par des cycles effectifs rationnellement \'equivalents,
on cherche une courbe $\Gamma$ g\'eom\'etriquement int\`egre dans le syst\`eme lin\'eaire
$\vert -nK \vert$ contenant le point $Q$, le support de $z$, et sur laquelle les points
$R$ et $S$ sont doubles, 
afin de faire baisser le genre g\'eom\'etrique de 2. Il faut pour cela
$$d+1+6+1 \leq n^2+n+1,$$
avec $d =n^2-n$. 
On doit donc avoir   $n>3$ et $d >6$.

Voici comment faire cela pr\'ecis\'ement. Choisissons le couple $R,S$ 
stable par 
   l'involution associ\'ee au rev\^etement double $X \to \P^2_{k}$
  d\'efini par $\vert -K \vert$.

On consid\`ere l'\'eclatement $p : Y \to X$  en ces points $R$ et $S$,
   les courbes exceptionnelles
 $E_{R}, E_{S} \subset Y$ introduites par l'\'eclatement, et le faisceau inversible
 $p^*(-nK)-2E_{R}-2E_{S}$ sur $Y$.
 D'apr\`es le lemme \ref{calculRR2},   pour $n \geq 3$, ce faisceau inversible est tr\`es ample sur $Y$.
 
  On applique ensuite 
  le th\'eor\`eme \ref{fertile}(b) \`a $Y$,
  au plongement de $Y$ d\'efini par $p^*(-nK)-2E_{R}-2E_{S}$
  au point $Q_{1}=p^{-1}(Q)$ et au z\'ero-cycle effectif $z_{1}=p^*(z)$.
  On trouve ainsi une courbe $\Gamma_{1} \subset Y$ g\'eom\'etriquement connexe, lisse,
  et contenant un $k$-point $Q_{2}$ rationnellement \'equivalent \`a $Q_{1}$ sur $Y$
  et un z\'ero-cycle effectif $z_{2}$ rationnellement \'equivalent \`a $z_{1}$ sur $Y$.
En utilisant le th\'eor\`eme de Riemann-Roch sur $Y$, on montre  $p_{a}(\Gamma_{1})=n^2-n-1=d-1$.
Par Riemann-Roch sur la courbe $\Gamma_{1}$, on trouve un z\'ero-cycle effectif $z_{3}$
rationnellement \'equivalent sur $\Gamma_{1}$  \`a $z_{2}-Q_{2}$, donc rationnellement \'equivalent
\`a $z_{1}-Q_{1}$ sur $Y$. Alors le z\'ero-cycle effectif $p_{*}(z_{3})$ de degr\'e $d-1$
est rationnellement \'equivalent \`a $z-Q$ sur $X$.

Ceci \'etablit (a). Les \'enonc\'es (b) et (c) sont des cons\'equences imm\'ediates.

\medskip

Montrons (d).  
Soit $z$ un z\'ero-cycle  quelconque de degr\'e $d\geq 0$. 
D'apr\`es (b), il est rationnellement
\'equivalent \`a $z_{1} - z_{2} $ avec $z_{1}$ effectif et $z_{2}$ effectif de degr\'e 6.

Le plus petit entier $d$ pour lequel on a
$n^2-n+1  \leq d-6$ et $d+6+1 \leq n^2+n+1$ est $d=49$,
avec $n=7$.
On consid\`ere d'abord le cas o\`u le z\'ero-cycle effectif $z_{1}$ est degr\'e 
$d=49$.
On utilise l'hypoth\`ese $k$ fertile et le th\'eor\`eme \ref{fertile}(b).
Quitte \`a remplacer les z\'ero-cycles effectifs  $z_{1} $ et $z_{2}$ 
par des z\'ero-cycles effectifs rationnellement \'equivalents,
dans le syst\`eme lin\'eaire $\vert -7K \vert$ qui v\'erifie $h^0(-7K)=n^2+n+1=57 > 49+6+1$ 
 on trouve   une courbe $\Gamma$ g\'eom\'etriquement irr\'eductible
et lisse de genre $n^2-n+1= 43$ 
 qui contient
les supports de $z_{1}$ et $z_{2}$. Le z\'ero-cycle $z_{1}-z_{2}$
de degr\'e $43$ est rationnellement \'equivalent sur $\Gamma$, donc sur $X$
\`a un z\'ero-cycle effectif.
 
Ceci implique que tout z\'ero-cycle  $z_{1} - z_{2} $ sur $X$ 
  avec $z_{1}$ effectif  
de degr\'e $d\geq  49$  et $z_{2}$ effectif de degr\'e 6 est rationnellement \'equivalent \`a un z\'ero-cycle effectif.

Ainsi tout z\'ero-cycle $z$  sur $X$ de degr\'e au moins \'egal \`a 43 est rationnellement \'equivalent \`a un z\'ero-cycle
effectif.
 \end{proof}

   \begin{rmk}
   La d\'emonstration \'etablit   que pour  $Q \in X(k)$ donn\'e,
   tout z\'ero-cycle effectif  de degr\'e $d\geq 1$ sur $X$ est rationnellement
   \'equivalent \`a $z_{1}  +Q$ avec $z_{1}$ z\'ero-cycle effectif, si 
   $d \notin \{1,2,3,6\}$.
    \end{rmk}

\section{Surfaces de del Pezzo de degr\'e 1}\label{dp1pointrat}

\begin{thm}\label{soustractionpointdp1}
Soit $X/k$ une surface de del Pezzo de degr\'e 1.
 
(a) Soit $Q\in X(k)$ un point rationnel.
Tout z\'ero-cycle effectif de degr\'e au moins 21 sur $X$
est rationnellement  \'equivalent \`a un z\'ero-cycle
effectif $z_{1}+rQ$ avec $r \geq 0$ et $z_{1}$
effectif de degr\'e au plus 21.

(b) Tout z\'ero-cycle de degr\'e positif ou nul est rationnellement \'equivalent
\`a une diff\'erence $z_{1}-z_{2}$ avec $z_{1}$  effectif
et  $z_{2}$ effectif de degr\'e au plus 21. 

(c) Tout z\'ero-cycle de degr\'e z\'ero
est  rationnellement \'equivalent \`a la diff\'erence de deux cycles effectifs de degr\'e 21.

(d)  Tout z\'ero-cycle sur $X$ de degr\'e au moins \'egal \`a $904$  est rationnellement
\'equivalent \`a un z\'ero-cycle effectif.
\end{thm}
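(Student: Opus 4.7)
The plan is to mimic the proofs of Theorems \ref{soustractionpointdp3} and \ref{soustractionpointdp2}, specialized to $K_X^2=1$. Recall that on $X$, Riemann--Roch together with Serre duality and Kodaira vanishing (valid since $-K_X$ is ample) give $h^0(-nK)=n(n+1)/2+1$ for $n\ge 1$, and a smooth geometrically connected curve $\Gamma\in\lvert -nK\rvert$ has genus $g(\Gamma)=n(n-1)/2+1$. For $n\ge 3$ the linear system $\lvert -nK\rvert$ is very ample, for instance by \cite[Thm.~1]{R}, embedding $X$ in $\P^{n(n+1)/2}$. Note that $X(k)\ne\emptyset$ is automatic for a del Pezzo surface of degree~$1$, since the anticanonical pencil $\lvert -K\rvert$ has a unique $k$-rational base point. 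By Proposition \ref{reducfertile} I reduce to the case $k$ fertile, and then Theorem \ref{fertile}(b) applies via Theorem \ref{ratconnexeRdense}, since $X$ is geometrically rationally connected.

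For (a), the strategy is a one-step descent: given $z$ effective of degree $d\ge 22$, I produce $z\sim z'+Q$ with $z'$ effective of degree $d-1$. Let $n$ be the smallest integer with $n(n+1)/2+1\ge d+2$, equivalently $n(n-1)/2\le d\le n(n+1)/2-1$; since $d\ge 22$ this forces $n\ge 7$. If $d\ge n(n-1)/2+2$, apply Theorem \ref{fertile}(b) to the embedding $X\hookrightarrow \P^{n(n+1)/2}$ given by $\lvert -nK\rvert$ to find a smooth, geometrically integral $\Gamma\in\lvert -nK\rvert$ containing effective representatives of $z$ and of $Q$; then conclude by Riemann--Roch on $\Gamma$, using $d-1\ge g(\Gamma)$. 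If $d=n(n-1)/2+1$, blow up a general $k$-point $R$ of $X$ to $p:Y\to X$ and work with $\lvert p^*(-nK)-2E\rvert$, whose projective dimension is $n(n+1)/2-3$ and whose smooth members have genus $n(n-1)/2=d-1$; the dimension suffices when $n\ge 5$. If $d=n(n-1)/2$, blow up two general $k$-points $R,S$ and use $\lvert p^*(-nK)-2E_R-2E_S\rvert$, of projective dimension $n(n+1)/2-6$ and genus $n(n-1)/2-1=d-1$; this requires $n\ge 7$, which is exactly the lower bound giving $d\ge 21$. Iterating the descent brings $z$ to $z_1+rQ$ with $z_1$ effective of degree at most $21$. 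Parts (b) and (c) then follow formally from (a), as in the proof of Theorem \ref{soustractionpointdp2}.

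The heart of (d) is the base case $D=904$. By (c) applied to the degree-zero cycle $z-904Q$, write $z\sim z_1-z_2$ where $z_1=904Q+e_1$ is effective of degree $925$ and $z_2=e_2$ is effective of degree $21$. The numerics at $n=43$ are tight on both sides: $h^0(-43K)=947=925+21+1$, which is just enough to impose the supports of $z_1$ and $z_2$ on a curve of $\lvert -43K\rvert$, and $g=904=\deg(z_1-z_2)$, the borderline for Riemann--Roch. Theorem \ref{fertile}(b) produces a smooth, geometrically integral $\Gamma\in\lvert -43K\rvert$ containing effective representatives of $z_1$ and $z_2$, and Riemann--Roch on $\Gamma$ concludes. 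For $D>904$, apply the base case to $z-(D-904)Q$, which has degree $904$, and add back the effective cycle $(D-904)Q$. The main obstacle will be the technical verification in the two boundary cases of (a): the surfaces $Y$ obtained by blowing up one or two general points of $X$ are no longer del Pezzo (their anticanonical self-intersection being $0$, respectively $-1$), so the very ampleness of the twisted systems and the vanishing needed to compute $h^0$ cannot be read off from standard del Pezzo results; they must be established as in Lemmas \ref{calculRR3} and \ref{calculRR2}, either by exhibiting explicit factorizations of the induced morphism through composite maps $Y\to X\to \P^m$, or by a Reider-type criterion \cite[Thm.~1]{R}.
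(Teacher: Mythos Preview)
Your overall architecture matches the paper's proof, and parts (b), (c), (d) are handled the same way. The substantive divergence is precisely at the point you flag as ``the main obstacle'': the two boundary cases of (a). You propose blowing up one or two \emph{general} $k$-points of $X$ and then verifying the needed positivity of $\lvert p^*(-nK)-2E\rvert$ or $\lvert p^*(-nK)-2E_R-2E_S\rvert$ via Reider's criterion or via factorizations as in Lemmas~\ref{calculRR3} and~\ref{calculRR2}. This is not what the paper does, and it is not clear your route goes through: after blowing up general points on a degree~$1$ del Pezzo one has $K_Y^2\le 0$, so neither Reider's nefness/positivity hypotheses nor the del Pezzo structure exploited in the earlier lemmas are available. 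The paper instead chooses the blown-up points with care. For $d=n(n-1)/2+1$ it blows up the base point $P$ of $\lvert -K\rvert$; then $\lvert p^*(-K)-E\rvert$ is the (base-point-free) elliptic pencil on $Y$, and writing $p^*(-nK)-2E=p^*(-(n-2)K)+2(p^*(-K)-E)$ shows the system defines a morphism with two-dimensional image for $n\ge 4$ (Theorem~\ref{fertile}(b) only needs this, not very ampleness). For $d=n(n-1)/2$ it blows up $P$ together with a point $S$ where the morphism given by $\lvert -2K\rvert$ is \'etale; a separate lemma shows $\lvert p^*(-2K)-E_S\rvert$ is then base-point free, and combining the building blocks $p^*(-2K)-E_S$, $p^*(-K)-E_P$, $p^*(-K)$ yields the required property for $\lvert p^*(-nK)-2E_P-2E_S\rvert$ when $n=6$ or $n\ge 8$.

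Note this decomposition does \emph{not} cover $n=7$: the paper explicitly leaves open whether $\lvert p^*(-7K)-2E_P-2E_S\rvert$ is base-point free (see the remark after the theorem). So your assertion that the two-point case works for $n\ge 7$ overshoots what is actually proved. This is harmless for (a) as stated, since in the descent from $d\ge 22$ the case $d=n(n-1)/2$ first occurs at $d=28$, i.e.\ $n=8$; but your phrase ``exactly the lower bound giving $d\ge 21$'' is misleading.
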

\begin{proof}
On va syst\'ematiquement appliquer le th\'eor\`eme \ref{fertile}(b). 
On dispose ici automatiquement d'un
$k$-point $P$, le point fixe du syst\`eme lin\'eaire $\vert -K \vert$, qui satisfait $h^0(-K)=2$.
Pour tout $n \geq 2$, le syst\`eme lin\'eaire $\vert -nK \vert$ 
est sans point base  \cite[Chap. III, Prop. 3.4]{Kol},  et son image est de dimension 2.
Pour tout $n \geq 3$, le faisceau inversible $-nK$ est tr\`es ample.

On ne conna\^{\i}t en g\'en\'eral pas la $k$-unirationalit\'e,
la propri\'et\'e de densit\'e, et encore moins   la propri\'et\'e de  $R$-densit\'e.
 On va donc utiliser la proposition \ref{reducfertile},
  qui permet de supposer le corps $k$ fertile,
 et  le th\'eor\`eme \ref{ratconnexeRdense}.

  Soit $n \geq 1$.  On a $h^0(-nK)=n(n+1)/2+1$, 
  et si $\Gamma$ est une courbe g\'eom\'e\-tri\-quement connexe lisse
   dans le sys\`eme lin\'eaire $\vert -nK \vert$, alors $g(\Gamma)=p_{a}(\Gamma)= n(n-1)/2+1$.
 
 Soit $z$ un z\'ero-cycle effectif  de degr\'e $d\geq 2$.
Soit $n$ le plus petit entier tel que
$d+2 \leq n(n+1)/2+1$. 
On a   $ n(n-1)/2  \leq  d$ et  $n \geq 2$.

D'apr\`es le th\'eor\`eme \ref{fertile}(b), sous l'hypoth\`ese  $d+2 \leq n(n+1)/2+1$ et $n\geq 3$,
quitte \`a remplacer le z\'ero-cycle effectif $z$ par un z\'ero-cycle effectif rationnellement \'equivalent
encore not\'e $z$, \'etranger \`a $P$
et $Q$ par un point rationnel rationnellement \'equivalent encore not\'e $Q$, 
\'etranger aux pr\'ec\'edents,
on peut supposer qu'il existe
 une courbe lisse  g\'eom\'etriquement connexe $\Gamma$ dans le syst\`eme lin\'eaire 
$\lvert -nK \rvert$
contenant  le  z\'ero-cycle  $z$  et  le point rationnel $Q$.

Si l'on a $n(n-1)/2+1 \leq d-1$,
 alors le z\'ero-cycle $z-Q$ est rationnellement \'equivalent sur $\Gamma$, 
 donc sur $X$, \`a un z\'ero-cycle effectif de degr\'e $d-1$.

La condition est satisfaite sauf si
$$n(n-1)/2 \leq d \leq n(n-1)/2 +1.$$

\medskip

Consid\'erons le cas $d =n(n-1)/2 +1$. 
 Ici $p_{a}(\Gamma)=d.$
 Quitte \`a remplacer $z$ et $Q$  par des cycles effectifs rationnellement \'equivalents,
on cherche une courbe $\Gamma$ g\'eom\'etriquement int\`egre dans le syst\`eme lin\'eaire
$\vert -nK \vert$ contenant  $Q$, 
 le support de $z$, et poss\'edant un $k$-point
double en  un point rationnel  $P$ \'etranger aux pr\'ec\'edents,
pour faire baisser le genre g\'eom\'etrique de 1.
On veut donc
$$ d+1+3 +1 \leq n(n-1)/2 +1,$$
avec $d =n(n-1)/2 +1$, soit $n>4$ et $d>7$.
 On consid\`ere l'\'eclatement $p : Y \to X$ en pr\'ecis\'ement le point $P$ point fixe du syst\`eme lin\'eaire anticanonique,
 la courbe exceptionnelle
 $E \subset Y$, et le faisceau inversible
 $p^*(-nK)-2E$ sur $Y$.
 
 Le syst\`eme lin\'eaire $\vert p^*(-K) -E \vert$ sur $Y$ 
 d\'efinit un morphisme $Y \to \P^1_{k}$ correspondant au pinceau 
 de courbes de genre arithm\'etique 1 d\'efinies par $-K$ sur $X$, surface de del Pezzo de degr\'e 1.
 Sur $X$, tout  syst\`eme lin\'eaire $\vert -nK \vert$ avec $n \geq 2$
 d\'efinit un morphisme dans un espace projectif, d'image de dimension 2.
 On conclut que sur $Y$,  les sections de tout faisceau inversible de la forme $p^*(-nK) + m(p^*(-K) -E)$
 avec $n \geq 2$ et $m\geq 1$ d\'efinissent un morphisme de $Y$ dans un espace projectif
 d'image de dimension 2. Ainsi, pour $n \geq 3$, les sections du faisceau inversible  $p^*(-nK)-2E$
 d\'efinissent  un morphisme de $Y$ dans un espace projectif
 d'image de dimension 2.  
 
 Comme le groupe des sections de $-nK$ sur $X$ ayant un point double en $P$ s'injecte dans le
 groupe des sections de $p^*(-nK)-2E$ sur $Y$, on a, sous l'hypoth\`ese $n>4$ ou encore $d>7$, 
 $$h^0(Y, p^*(-nK)-2E) \geq  n(n-1)/2 +1 -3 \geq d+2.$$
 Par ailleurs le genre de toute courbe g\'eom\'etriquement connexe lisse
 dans le syst\`eme lin\'eaire $\vert p^*(-nK)-2E) \vert$ est \'egal \`a $(n^2-n)/2=d-1$.

 D'apr\`es le th\'eor\`eme \ref{fertile}(b), sous l'hypoth\`ese $n\geq 3$,
quitte \`a remplacer le z\'ero-cycle effectif $p^*(z)$ par un z\'ero-cycle effectif rationnellement \'equivalent
  $z_{1}$, \'etranger \`a $p^*(Q)$
et $p^*(Q)$ par un point rationnel  $Q_{1} \in Y(k)$ rationnellement \'equivalent, 
il existe
 une courbe lisse  g\'eom\'etriquement connexe $\Gamma$ dans le syst\`eme lin\'eaire 
$\lvert p^*(-nK)-2E \rvert$
contenant  le  z\'ero-cycle  $z_{1}$  et  le point rationnel $Q_{1}$. Cette courbe est de genre $d-1$.
On trouve donc sur elle un z\'ero-cycle effectif de degr\'e $d-1$ rationnellement \'equivalent
\`a $z_{1}-Q_{1}$. L'image directe sur $X$ donne un z\'ero-cycle effectif de degr\'e $d-1$
rationnellement \'equivalent \`a $z-Q$.

\medskip

Consid\'erons le cas $d =n(n-1)/2$. 
  Ici $p_{a}(\Gamma)=d+1.$
  Dans ce cas, on  va fixer deux  autres points rationnels $R, S \in X(k)$, distincts de $Q$.
  Quitte \`a remplacer par des cycles effectifs rationnellement \'equivalents,
on cherche une courbe $\Gamma$ g\'eom\'etriquement int\`egre dans le syst\`eme lin\'eaire
$\vert -nK \vert$ contenant les point $Q$, le support de $z$, et sur laquelle les points
$R$ et $S$ sont doubles, 
pour faire baisser le genre g\'eom\'etrique de 2. Il faut pour cela
$$d+1+6+1 \leq n(n+1)/2+1,$$
avec $d =n(n-1)/2$.
On doit donc avoir $n>6$ et  $d>15$.

On consid\`ere l'\'eclatement $p : Y \to X$  en les points $R$ et $S$,
   les courbes exceptionnelles
 $E_{R}, E_{S} \subset Y$, et le faisceau inversible
 $p^*(-nK)-2E_{R}-2E_{S}$ sur $Y$.

  \begin{lemma}\label{lem3}
  Soient $k$ un corps  et $X$ une $k$-vari\'et\'e projective.
   Soit $f : X \to \P^n$ un $k$-morphisme, $Z \subset \P^n_{k}$ son image sch\'ematique.
  Soit $Y \to X$ l'\'eclat\'e de $Y$ en un $k$-point lisse $m \in X(k)$, et soit
  $E \subset Y$ le diviseur exceptionnel.
  Si $f : X \to Z $ est \'etale dans un voisinage de $m$,
 alors le syst\`eme lin\'eaire $\vert f^*(O_{\P^n}(1)) \otimes O_{Y}(-E) \vert$ sur $Y$  est
 sans point base.
 \end{lemma}
   \begin{proof} Pour \'etablir cela, on peut supposer $k$ alg\'ebriquement clos.
   Il suffit alors d'utiliser le fait qu'au point $m$ le morphisme $f$ s\'epare les points infiniment voisins.
   \end{proof}

   Sur la surface de del Pezzo $X$ de degr\'e 1,
le syst\`eme lin\'eaire $\vert -2K \vert$
   d\'efinit  un morphisme $f: X \to   \P^3_{k}$ d'image de dimension 2.
   Comme on a suppos\'e ${\rm car}(k)=0$, ce  morphisme est g\'en\'eriquement \'etale.
   Soit $S \in X(k)$ un point o\`u $f$ est \'etale. Soit $g_{S}: X_{S} \to X$ l'\'eclatement au point $S$ et $E_{S} \subset X_{S}$
   la courbe exceptionnelle.
   D'apr\`es le lemme \ref{lem3}, le syst\`eme lin\'eaire $\vert g_{S}^*(-2K)-E_{S} \vert$ est sans point base et d\'efinit un morphisme surjectif $X_{S} \to \P^2_{k}$.
   Tout multiple de $g_{S}^*(-2K)-E_{S} \in {\rm Pic}(X_{S})$ est sans point base et d\'efinit un morphisme d'image de
   dimension 2.
   
   Pour le point $R$ annonc\'e plus haut on va choisir le point $P$ qui est le point base
   du syst\`eme lin\'eaire $\vert -K \vert$ sur $X$.
   Soit $g_{P}: X_{P} \to X$ l'\'eclat\'e de $X$ en $P$  
   et $E_{P} \subset X_{P}$
   la courbe exceptionnelle.

   On a vu ci-dessus que  $g_{P}^*(-K) -E_{P}$ d\'efinit un morphisme $X_{P} \to \P^1$.
Tout multiple de $g_{P}^*(-K) -E_{P}$ d\'efinit donc un morphisme.
 
Rappelons que  pour $n \geq 2$, le syst\`eme lin\'eaire $\vert -nK \vert$ 
sur $X$ est sans point base.

Soit $Y$  le produit fibr\'e $X_{P}$ et $X_{S}$ au-dessus de $X$.
 C'est l'\'eclat\'e   $p: Y \to X$ de $X$ en $P$ et $S$.
On note encore $E_{P}$ et $E_{S}$ les diviseurs exceptionnels dans $Y$.
Toute combinaison lin\'eaire \`a coefficients entiers $(a,b,c)$
$$  a(p^*(-2K) - E_{S}) +
b(p^*(-K)-E_{P}) + c(p^*(-K)) $$
 avec $a\geq 0, b\geq 0$ et $c=0$ ou $ c \geq 2$  d\'efinit un syst\`eme
lin\'eaire sans point base sur $Y$,    d'image de dimension 2.
Ainsi pour $n=6$ et pour $n \geq 8$,  le syst\`eme lin\'eaire
$\vert p^*(-nK)  -2E_{S}-2E_{P} \vert$ d\'efinit un syst\`eme
lin\'eaire sans point base sur $Y$,  d'image de dimension 2.

D'apr\`es le th\'eor\`eme \ref{fertile}(b), sous l'hypoth\`ese $n\geq 8$,
quitte \`a remplacer le z\'ero-cycle effectif $p^*(z)$ par un z\'ero-cycle effectif rationnellement \'equivalent
  $z_{1}$, \'etranger \`a $p^*(Q)$
et $p^*(Q)$ par un point rationnel  $Q_{1} \in Y(k)$ rationnellement \'equivalent, 
il existe
 une courbe lisse  g\'eom\'etriquement connexe $\Gamma_{1}$ dans le syst\`eme lin\'eaire 
$\lvert p^*(-nK)-2E \rvert$
contenant  le  z\'ero-cycle  $z_{1}$  et  le point rationnel $Q_{1}$. Cette courbe est de genre $d-1$.
On trouve donc sur elle un z\'ero-cycle effectif de degr\'e $d-1$ rationnellement \'equivalent
\`a $z_{1}-Q_{1}$. L'image directe sur $X$ donne un z\'ero-cycle effectif de degr\'e $d-1$
rationnellement \'equivalent \`a $z-Q$.

La condition $n >7$ \'equivaut ici \`a $d = n(n-1)/2 > 21$.

Ceci \'etablit (a). Les \'enonc\'es (b) et (c) sont des cons\'equences imm\'ediates.

\medskip

Montrons (d).  
Soit $z$ un z\'ero-cycle  quelconque de degr\'e $d\geq 0$. 
D'apr\`es (b), il est rationnellement
\'equivalent \`a $z_{1} - z_{2} $ avec $z_{1}$ effectif et $z_{2}$ effectif de degr\'e 21.

Le plus petit entier $d$ pour lequel on a
$n(n-1)/2+1  \leq d-21$ et $d+21+1 \leq n(n+1)/2 +1$ est $d=925$,
avec $n=43$.

On consid\`ere d'abord le cas o\`u le z\'ero-cycle effectif $z_{1}$ est degr\'e 
$d=925$.
On utilise l'hypoth\`ese $k$ fertile et le th\'eor\`eme \ref{fertile}(b).
Quitte \`a remplacer les z\'ero-cycles effectifs  $z_{1} $ et $z_{2}$ 
par des z\'ero-cycles effectifs rationnellement \'equivalents,
dans le syst\`eme lin\'eaire $\vert -43K \vert$ qui v\'erifie $h^0(-43K)=n(n+1)/2 +1= 947$ 
 on trouve   une courbe $\Gamma$ g\'eom\'etriquement irr\'eductible
et lisse de genre $n(n-1)/2+1=904$ 
 qui contient
les supports de $z_{1}$ et $z_{2}$. Le z\'ero-cycle $z_{1}-z_{2}$
de degr\'e $904$ est rationnellement \'equivalent sur $\Gamma$, donc sur $X$
\`a un z\'ero-cycle effectif.
 
Ceci implique que tout z\'ero-cycle  $z_{1} - z_{2} $ sur $X$ 
  avec $z_{1}$ effectif  
de degr\'e $d\geq  925$  et $z_{2}$ effectif de degr\'e $21$  est rationnellement \'equivalent \`a un z\'ero-cycle effectif.

Ainsi tout z\'ero-cycle $z$  sur $X$ de degr\'e au moins \'egal \`a $904$  est rationnellement \'equivalent \`a un z\'ero-cycle
effectif.
\end{proof}

 \begin{rmk}
 La d\'emonstration \'etablit   que pour  $Q \in X(k)$ donn\'e,
   tout z\'ero-cycle effectif  de degr\'e $d\geq 1$ sur $X$ est rationnellement
   \'equivalent \`a $z_{1}  +Q$ avec $z_{1}$ z\'ero-cycle effectif, si  l'on a
  $d \notin \{1, 2, 3,4, 6, 10, 15, 21\}$. Il est tr\`es vraisemblable que l'on 
  pourrait \'eliminer le cas $d=21$. Ce serait le cas 
 si  avec les notations de la d\'emonstration ci-dessus  on savait que
 le syst\`eme lin\'eaire  $\vert p^*(-7K)  -2E_{S}-2E_{P} \vert$  sur $Y$  
 est sans point base. 
 
 Si l'on  pouvait \'eliminer le cas $d=21$, alors au point (d) on pourrait remplacer 904
 par 466.
 \end{rmk}

\section{Surfaces g\'eom\'etriquement rationnelles}\label{total}

Soient $k$ un corps 
et $X$ une $k$-surface projective et lisse
g\'eom\'etriquement rationnelle. Le th\'eor\`eme d'Enriques-Manin-Iskovskikh \cite{I}
et Mori dit qu'une telle surface $k$-minimale
 est $k$-isomorphe soit \`a une
surface projective et lisse fibr\'ee en coniques  (g\'en\'eriquement lisses) relativement minimale
au-dessus d'une conique lisse, soit \`a une surface (lisse) de del Pezzo.
 Une surface de del Pezzo $X$  est une surface dont le faisceau anticanonique $-K$
est ample.

Pour une surface fibr\'ee en coniques relativement minimale $X \to C$ au-dessus
d'une conique lisse, les fibres g\'en\'erales sont des coniques lisses.
Les fibres singuli\`eres $X_{P}$ sont form\'ees de deux droites conjugu\'ees
se rencontrant transversalement au-dessus d'un point ferm\'e s\'eparable $P$.

 \begin{thm}\label{pluspetitpremier}
 Soient $k$ un corps de caract\'eristique z\'ero et
 $X$ une $k$-surface projective, lisse, g\'eom\'etriquement rationnelle.
Il existe un entier $N(X)$, 
 qui ne d\'epend que de
 la g\'eom\'etrie de $X$ sur une cl\^{o}ture alg\'ebrique de $k$,  tel que
si  $X$ poss\`ede un z\'ero-cycle de degr\'e 1, alors $X$
poss\`ede des points ferm\'es dont les  degr\'es sont  inf\'erieurs ou \'egaux \`a $N(X)$
et  sont premiers entre eux  dans leur ensemble.
 Notant  $K_{X}$
 la classe canonique de $X$, on peut prendre
 $$N(X) =  {\rm max}(10, \lfloor{4- (K_{X}.K_{X})/2) }\rfloor).$$

  \end{thm}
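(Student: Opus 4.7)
\emph{Schéma de démonstration.} La démonstration procède par réduction au cas $k$-minimal, suivie d'une analyse cas par cas selon la classification d'Enriques--Manin--Iskovskikh--Mori. Pour la réduction : si $X$ n'est pas $k$-minimale, on contracte successivement des orbites galoisiennes de $(-1)$-courbes géométriques deux à deux disjointes, obtenant un morphisme birationnel $p : X \to X_{\min}$ vers un modèle $k$-minimal. Chaque contraction augmente strictement $K^2$, donc $K_{X_{\min}}^2 \geq K_X^2$, ce qui entraîne $N(X_{\min}) \leq N(X)$ puisque $N$ est décroissante en $K^2$ (et minorée par $10$). L'image directe par $p$ envoie un zéro-cycle de degré $1$ sur $X$ sur un zéro-cycle de degré $1$ sur $X_{\min}$. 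Réciproquement, tout point fermé de $X_{\min}$ choisi hors de l'image du lieu exceptionnel se relève en un point fermé de $X$ de même degré ; la souplesse des constructions des sections précédentes (densité sur corps fertile, cf. la proposition~\ref{reducfertile}) permet un tel choix. Il suffit donc d'établir le théorème pour $X_{\min}$.

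Par la classification, $X_{\min}$ est soit une surface fibrée en coniques relativement minimale au-dessus d'une conique lisse $C$, soit une surface de del Pezzo. Dans le cas fibré en coniques, l'existence d'un zéro-cycle de degré $1$ sur $X_{\min}$ donne, par image directe, un zéro-cycle de degré $1$ sur $C$, donc $C \cong \P^1_k$ par Springer ; on applique alors le théorème principal de \cite{CTC}, qui fournit des points fermés de degrés globalement premiers entre eux, bornés en fonction du nombre $r$ de fibres géométriques dégénérées. Comme $K_{X_{\min}}^2 = 8 - r$, cette borne s'ajuste à $N(X_{\min}) = \max(10, \lfloor r/2 \rfloor)$. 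Dans le cas del Pezzo, on distingue selon $d := (K_{X_{\min}}.K_{X_{\min}}) \in \{1, \ldots, 9\}$ : pour $d \geq 4$, l'énoncé rappelé dans la note de bas de page donne l'existence d'un point rationnel à partir d'un zéro-cycle de degré $1$ ; pour $d = 1$, le point de base de $\lvert -K \rvert$ est $k$-rationnel ; pour $d = 3$ et $d = 2$, on invoque respectivement les théorèmes~\ref{cubiques} et~\ref{DP2}, qui fournissent un point fermé de degré premier à $3$ (resp. impair) au plus $10$ (resp. $7$), qu'on complète par un point fermé supplémentaire de degré coprimal (degré $3$ découpé par une droite dans le cas cubique, degré $2$ image réciproque d'un $k$-point de $\P^2$ par le revêtement double dans le cas $d = 2$) pour garantir un pgcd global des degrés égal à $1$.

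Le point technique principal sera la vérification fine des bornes dans le cas fibré en coniques avec $r$ grand, où il faut confronter la borne explicite de \cite{CTC} à $\lfloor r/2 \rfloor$, et la justification précise (ou redémonstration) du résultat du footnote pour $d \geq 4$, qui lui-même repose sur des arguments classiques distincts selon la valeur de $d$ (formes de $\P^2$ ou de quadriques via Witt--Springer, del Pezzo de degrés $5, 6, 7$ via Enriques--Swinnerton-Dyer--Manin, et del Pezzo de degré $4$ via l'étude de Coray \cite{C2}).
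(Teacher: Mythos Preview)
Your proposal is correct and follows essentially the same route as the paper: reduce to a $k$-minimal model, then split into the conic bundle case (handled by \cite[Thm.~B]{CTC}, giving an odd-degree closed point of degree at most $\max(1,\lfloor r/2\rfloor)$ together with a degree~$1$ or~$2$ point from a fibre) and the del Pezzo case (handled degree by degree, with $d\geq 4$ and $d=1$ giving a rational point, and $d=3,2$ handled by Theorems~\ref{cubiques} and~\ref{DP2} supplemented by the obvious degree~$3$, resp.\ degree~$2$, point).

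One small simplification you should make: the lifting step in your reduction does not require any appeal to fertile fields or to the flexibility of the earlier constructions. If $p:X\to X_{\min}$ is a birational morphism of smooth projective surfaces and $P$ is any closed point of $X_{\min}$ of degree $d$, then either $P$ lies outside the image of the exceptional locus and $p^{-1}(P)$ is a closed point of $X$ of degree $d$, or $P$ is one of the finitely many blown-up points, in which case the fibre $p^{-1}(P)$ is a $k(P)$-form of $\P^1$ and carries a $k(P)$-rational point, hence a closed point of $X$ of degree $d$. Either way no density argument is needed; the paper records this as an easy verification.
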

 
  \begin{proof}

 Consid\'erons d'abord le cas d'une surface de del Pezzo. Soit
$d=(K.K)$ son degr\'e. Une telle surface poss\`ede un z\'ero-cycle effectif
de degr\'e $d$.
 On a $1 \leq d \leq 9$.
Supposons que $X$ poss\`ede un  z\'ero-cycle de degr\'e 1.
Pour $d \geq 5$, c'est un r\'esultat classique qu'alors $X$ poss\`ede un point
rationnel  : pour $d=5, 7$ il existe toujours un point rationnel \cite{Ma66,VA}.
Pour $d=8$, 
l'existence d'un z\'ero-cycle de degr\'e 1 implique  que toute classe
dans le groupe de Picard g\'eom\'etrique invariante sous l'action du groupe de Galois
de $k$ est dans l'image du groupe de Picard de $X$. La moiti\'e de la classe
anticanonique de $X$ d\'efinit alors plongement de $X$ dans $\P^3_{k}$ dont l'image
est une quadrique. 
On se ram\`ene  ainsi \`a l'\'enonc\'e pour les quadriques de $\P^3_{k}$,
pour lesquelles le r\'esultat est un
 cas particulier du  th\'eor\`eme de   Springer \cite{Spr}  pour les quadriques quelconques. 
 Pour $d=9$, 
$X$ est une surface de Severi--Brauer. Le cas $d=6$ est plus subtil.
On peut l'\'etablir en utilisant le th\'eor\`eme de
 Manin \cite[Chap. IV, Thm. 30.3.1]{Ma} que $X$ contient un ouvert
qui est un espace principal homog\`ene sous un $k$-tore.

Pour $d=4$, l'existence d'un point rationnel fut \'etablie par Coray \cite{C2}. 
On laisse au lecteur le soin de simplifier \cite{C2} suivant la m\'ethode
de cet article.  Pour $d=4$, $X$ est une intersection de deux quadriques
dans $\P^4_{k}$. Par des m\'ethodes \'el\'ementaires,
 M. Amer (non publi\'e)  et  A. Brumer  \cite{B} montr\`erent ensuite que,
pour tout entier naturel $m$,  toute intersection  
de deux quadriques dans $\P^m_{k}$ qui poss\`ede un point
dans une extension de $k$ de degr\'e impair poss\`ede un point rationnel.

Dans tous ces cas, on peut prendre $N(X)=1$.
 
Pour $d=3$, le th\'eor\`eme de Coray  \cite{C1} repris 
au paragraphe \ref{cubique1}
ci-dessus 
donne un point dans une extension
de degr\'e 1, 4 ou 10 et dans une extension de degr\'e 1 ou 3.
On peut prendre $N(X)=10$.

Pour $d=2$, le th\'eor\`eme \ref{DP2} donne un point dans une extension
de degr\'e 1, 3 ou 7 et dans une extension de degr\'e 1 ou 2.
On peut prendre $N(X)=7$.

Toute surface de del Pezzo de degr\'e 1 poss\`ede un point rationnel canonique,
le point fixe du syst\`eme lin\'eaire anticanonique. Ici $N(X)=1$.

\medskip

On v\'erifie facilement que si $Y \to X$ est un $k$-morphisme birationnel
 de $k$-vari\'et\'es projectives, lisses, g\'eom\'etriquement connexes, 
 si $X$ satisfait la propri\'et\'e ci-dessus avec $N(X)$, alors $Y$
 satisfait la propri\'et\'e avec $N(Y)=N(X)$.
 Par ailleurs, si   $Y \to X$ est un $k$-morphisme birationnel de surfaces projectives et lisses,
 qui g\'eom\'etriquement est  obtenu par \'eclatements successifs de $s$ points,
 alors $(K_{Y}.K_{Y})= (K_{X}.K_{X})-s$. 
 Si donc on peut prendre pour $N(X)$
 la fonction  de $K_{X}$  indiqu\'ee \`a la fin du th\'eor\`eme, alors on peut prendre pour
 $N(Y)$ cette fonction de $K_{Y}$.
 
 \medskip
 
 Soit donc d\'esormais $X$ une $k$-surface projective, lisse, g\'eom\'etriquement rationnelle, $k$-minimale,
 poss\'edant un  z\'ero-cycle de degr\'e 1. On a d\'ej\`a \'etabli l'\'enonc\'e avec $N(X)=10$ pour les surfaces de del Pezzo.
Consid\'erons maintenant le cas d'une surface $X$ fibr\'ee en coniques relativement
minimale au-dessus d'une conique $C$ lisse. Comme $X$, la courbe $C$
 poss\`ede un z\'ero-cycle de
degr\'e  1, et donc $C\simeq \P^1_{k}$. La surface $X$ poss\`ede donc 
un point ferm\'e de degr\'e 1 ou 2.
Si on note $r$ le nombre de fibres
g\'eom\'etriques singuli\`eres de la fibration $X\to \P^1_{k}$, on a $r=8 -(K.K)$.
D'apr\`es \cite[Thm. B]{CTC},    si $X$ poss\`ede un point ferm\'e de  degr\'e impair,
alors $X$ poss\`ede un point ferm\'e
 de
  degr\'e impair au plus \'egal \`a ${\rm max}(1, \lfloor{r/2}\rfloor)$,
  valeur que l'on prend pour $N(X)$.
\end{proof}

 \begin{thm}\label{touseffectifs}
  Soit  $X$ une $k$-surface projective, lisse, g\'eom\'e\-tri\-quement rationnelle,
sur un corps $k$ de caract\'eristique z\'ero. Soit $K_{X}$
 la classe canonique de $X$.
Supposons que $X$  poss\`ede un point $k$-rationnel.
Soit $K_{X}$
 la classe canonique de $X$. Il existe un entier $M(X)$, 
 qui ne d\'epend que de
 la g\'eom\'etrie de $X$ sur une cl\^{o}ture alg\'ebrique de $k$,  tel que
tout  z\'ero-cycle de degr\'e au moins $M(X)$ est rationnellement
 \'equivalent \`a un z\'ero-cycle effectif. En particulier, le groupe de Chow des z\'ero-cycles est engendr\'e 
 par les points ferm\'es de degr\'e au plus  $M(X)$.
Notant  $K_{X}$
 la classe canonique de $X$, on peut prendre $$M(X)= {\rm max}(904, \lfloor{3- (K_{X}.K_{X})/2)}\rfloor ).$$
  \end{thm}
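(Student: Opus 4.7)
The plan is to reduce to the case of a $k$-minimal geometrically rational surface and invoke the case-by-case analysis already carried out in the paper. First I would establish a birational invariance lemma: for any birational morphism $p : X \to X_{0}$ of smooth projective $k$-surfaces, the proper pushforward $p_{*} : CH_{0}(X) \to CH_{0}(X_{0})$ is an isomorphism (since $p$ is a sequence of blow-ups at closed points, and on each exceptional $\P^{1}_{k(m)}$ any two closed points are rationally equivalent after normalization by $[k(m):k]$), and moreover that if the assertion ``every zero-cycle of degree $\geq M$ is rationally equivalent to an effective one'' holds on $X_{0}$, then it holds on $X$ with the same bound. For the latter, given $\tilde z \in Z_{0}(X)$ of degree $d \geq M$, push down to $p_{*}\tilde z$, choose an effective representative $z_{0}' \sim p_{*}\tilde z$ on $X_{0}$ whose support avoids the finitely many blow-down centers — this is where the density Propositions \ref{Rdensecycles} and \ref{densecycles} (combined with Proposition \ref{reducfertile} to pass to a fertile field if needed) do the real work — and take the strict transform $\tilde z'$: it is effective on $X$ with $p_{*}\tilde z' = z_{0}' \sim p_{*}\tilde z$, hence $\tilde z' \sim \tilde z$ by injectivity of $p_{*}$.

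Next, I would choose a $k$-minimal model $X_{0}$ of $X$. By the Enriques--Manin--Iskovskikh--Mori classification, $X_{0}$ is either a del Pezzo surface or a relatively minimal conic bundle over a smooth conic $C$. Since $X$ has a $k$-point so does $X_{0}$, and in the conic bundle case $C \simeq \P^{1}_{k}$. The bound $M(X_{0})$ is then controlled case by case: for del Pezzo of degree $1$, $2$, $3$, Theorems \ref{soustractionpointdp1}(d), \ref{soustractionpointdp2}(d), \ref{soustractionpointdp3}(f) give the respective bounds $904$, $43$, $10$; for del Pezzo of degree $\geq 4$, blowing up a $k$-point in general position produces a conic bundle on $\P^{1}_{k}$ with at most $5$ degenerate geometric fibers, whence \cite{CTC} yields $M(X_{0}) \leq 1$ (cf.\ the remark following Theorem \ref{soustractionpointdp3}); for a relatively minimal conic bundle on $\P^{1}_{k}$ with $r = 8 - K_{X_{0}}^{2}$ degenerate geometric fibers, \cite{CTC} gives $M(X_{0}) \leq \max(0, \lfloor r/2\rfloor - 1) = \lfloor 3 - K_{X_{0}}^{2}/2 \rfloor$ whenever the latter is nonnegative. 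Since $X \to X_{0}$ is a composition of blow-ups one has $K_{X}^{2} \leq K_{X_{0}}^{2}$, whence $\lfloor 3 - K_{X_{0}}^{2}/2 \rfloor \leq \lfloor 3 - K_{X}^{2}/2 \rfloor$ and in every case
$$M(X) \leq M(X_{0}) \leq \max\bigl(904,\, \lfloor 3 - K_{X}^{2}/2 \rfloor\bigr).$$

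For the ``in particular'' clause, given any closed point $P$ of degree $d > M(X)$ and a fixed rational point $Q \in X(k)$, the zero-cycle $P - (d - M(X))Q$ has degree exactly $M(X)$, hence is rationally equivalent to an effective cycle $\sum_{i} n_{i}P_{i}$ of total degree $M(X)$; each $P_{i}$ then has $[k(P_{i}):k] \leq M(X)$, and the relation $[P] = (d - M(X))[Q] + \sum_{i} n_{i}[P_{i}]$ in $CH_{0}(X)$ exhibits $[P]$ as a $\Z$-linear combination of classes of closed points of degree $\leq M(X)$.

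The main technical obstacle lies in the birational invariance step: one must genuinely check that, after pushing a zero-cycle down to $X_{0}$, one can re-represent it up to rational equivalence by an effective cycle whose support is disjoint from the (finitely many) blow-down centers, so that the strict transform is an honest effective lift of the right class on $X$. Once this is set up using the density results of \S\ref{bertinifertile}, the remainder of the argument is a clean assembly of the del Pezzo results of \S\ref{dp3pointrat}, \S\ref{dp2pointrat}, \S\ref{dp1pointrat} and of the conic bundle theorem of \cite{CTC}.
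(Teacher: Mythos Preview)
Your approach is essentially the paper's: reduce to a $k$-minimal model, invoke the Enriques--Manin--Iskovskikh--Mori classification, handle each type via the del Pezzo theorems of \S\ref{dp3pointrat}--\ref{dp1pointrat} and the conic bundle result of \cite{CTC}, and transfer the bound back using $K_X^2 \leq K_{X_0}^2$.

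Two minor points. First, you overcomplicate what you flag as ``the main technical obstacle''. No density argument or passage to a fertile field is needed in the birational invariance step: if $m$ is a blow-down center with exceptional divisor $E_m \simeq \P^1_{k(m)}$, any $k(m)$-rational point of $E_m$ is an effective lift of $m$, and all such lifts are mutually rationally equivalent on $X$. Thus every effective cycle on $X_0$ lifts directly to an effective cycle on $X$ in the right class, which is why the paper dispatches this step in one sentence. Second, for del Pezzo surfaces of degree $\geq 5$, blowing up a single $k$-point does not in general yield a conic bundle structure defined over $k$; the paper instead uses the classical fact that such surfaces with a $k$-point are $k$-rational, giving $M(X_0)=0$ immediately (the conic-bundle trick is applied only in degree $4$).
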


\begin{proof}
On v\'erifie que si $X \to Y$ est un $k$-morphisme 
birationnel de $k$-surfaces projectives et lisses g\'eom\'etriquement connexes, et s'il existe
un tel entier $M(Y)$ avec la propri\'et\'e ci-dessus pour $Y$, alors
on a la m\^eme propri\'et\'e pour $X$ avec $M(X)=M(Y)$. Par ailleurs
la fonction de $(K_{X}.K_{X})$ indiqu\'ee dans le th\'eor\`eme est non d\'ecroissante par \'eclatement.
On peut donc supposer la surface $X$ $k$-minimale.

Pour les surfaces  fibr\'ees en coniques au-dessus de $\P^1_{k}$,  relativement minimales
avec $r$ fibres g\'eom\'etriques singuli\`eres,  d'apr\`es \cite[Thm. B]{CTC} on peut
prendre $M(X)={\rm max}(0, \lfloor{r/2}\rfloor -1)$.

On sait que toute $k$-surface de del Pezzo   de degr\'e au moins \'egal \`a 5
avec un point rationnel est $k$-birationnelle \`a un espace projectif \cite{Ma66,VA}.
Dans ce cas, on peut donc prendre $M(X)=0$.
Pour les surfaces de del Pezzo de degr\'e 4, on peut prendre $M(X)=1$ : par \'eclatement d'un $k$-point
non situ\'e sur les 16 droites, on se ram\`ene \`a un fibr\'e en coniques avec $r\leq 5$ fibres g\'eom\'etriques
singuli\`eres, et on peut appliquer le r\'esultat g\'en\'eral ci-dessus.
Pour les  surfaces de del Pezzo de degr\'e 3, le th\'eor\`eme \ref{soustractionpointdp3} donne $M(X)=10$.
Pour les  surfaces de del Pezzo de degr\'e 2,  le th\'eor\`eme \ref{soustractionpointdp2} donne $M(X)=43$.
Pour les surfaces de del Pezzo de degr\'e 1,  le th\'eor\`eme \ref{soustractionpointdp1} donne $M(X)=904$.
\end{proof}

\begin{rmk}
Il resterait \`a \'eliminer l'hypoth\`ese d'existence d'un point rationnel dans le th\'eor\`eme \ref{touseffectifs},
ce qui impliquerait le th\'eor\`eme \ref{pluspetitpremier} avec  l'estimation sans doute trop grossi\`ere $N(X)=M(X)+1$.
\end{rmk}

\section{Surfaces cubiques sans point rationnel}\label{cubique2}

Soit $k$ un corps de caract\'eristique z\'ero.  Soit $X \subset \P^3_{k}$ une surface cubique lisse. Comme rappel\'e plus haut,
si  la surface cubique lisse $X$ poss\`ede un point rationnel, alors  elle est $k$-unirationnelle et
  l'ensemble $X(k)$ de ses points $k$-rationnels est dense dans $X$
 pour la topologie de Zariski.  Il  est donc
facile de trouver 3 points  rationnels sur $X$ qui ne sont pas align\'es dans $\P^3_{k}$.
Le th\'eor\`eme suivant est une r\'eponse partielle \`a la question pos\'ee \`a la fin de l'introduction
du r\'ecent article \cite{QM}.

\begin{thm}\label{aligne}
Soit $X \subset \P^3_{k}$ une surface cubique lisse sur un corps $k$ de caract\'eristique nulle,
 sans point rationnel. Si tout point ferm\'e de degr\'e 3 sur $X$ est d\'ecoup\'e par une
 droite de $\P^3_{k}$, alors \`a toute droite g\'en\'erale de $\P^3_{k}$ on peut associer
  une surface de del Pezzo de degr\'e 1 sur $k$
 dont les points $k$-rationnels ne sont pas denses pour la topologie de Zariski,
 et donc qui en particulier n'est pas $k$-unirationnelle.
\end{thm}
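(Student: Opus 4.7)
Fixons une droite g\'en\'erale $L \subset \P^{3}_{k}$. Comme $X(k)=\emptyset$, la cubique lisse $X$ n'admet pas non plus de point ferm\'e de degr\'e $2$ : la droite joignant les deux points g\'eom\'etriques d'un tel point couperait $X$ en un troisi\`eme point Galois-invariant, donc $k$-rationnel (et si cette droite \'etait contenue dans $X$, elle porterait un $k$-point). L'intersection $L \cap X$ est donc un unique point ferm\'e $P$ de degr\'e $3$, g\'eom\'etriquement r\'eduit pour $L$ g\'en\'eral. L'\'eclatement $p : \tilde X \to X$ de $P$ r\'esout la projection depuis $L$ en un morphisme $\pi : \tilde X \to \P^{1}_{k}$ dont la fibre en $t$ est la section plane $X_{t} = \Pi_{t} \cap X$, o\`u $\Pi_{t}$ parcourt les plans contenant $L$ ; pour $L$ g\'en\'eral, c'est une fibration en courbes de genre $1$ \`a fibre g\'en\'erique lisse. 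On note $J \to \P^{1}_{k}$ la jacobienne de cette fibration (surface elliptique rationnelle sur $k$ munie de sa section nulle $O$) et $\phi : J \to Y$ la contraction de la $(-1)$-courbe $O$. La surface $Y/k$ obtenue est lisse, v\'erifie $(K_{Y}.K_{Y})=1$ et, pour $L$ suffisamment g\'en\'eral, est une surface de del Pezzo de degr\'e $1$ sur $k$ avec $k$-point distingu\'e $y_{0}=\phi(O)$. C'est la surface associ\'ee \`a $L$.

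Le point cl\'e est l'\'egalit\'e $J_{t}(k)=\{0\}$ pour tout $t \in \P^{1}(k)$ tel que $X_{t}$ soit lisse. On a $X_{t}(k) \subset X(k)=\emptyset$, et l'argument pr\'ec\'edent appliqu\'e \`a la cubique plane $X_{t} \subset \Pi_{t} \cong \P^{2}_{k}$ montre que $X_{t}$ n'admet pas non plus de point ferm\'e de degr\'e $2$. Tout diviseur effectif de degr\'e $3$ sur $X_{t}$ d\'efini sur $k$ est donc un unique point ferm\'e $D$ de degr\'e $3$, et par hypoth\`ese $D$, vu sur $X$, est d\'ecoup\'e par une droite $L'$ de $\P^{3}_{k}$. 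Comme les trois points g\'eom\'etriques de $D$ sont dans $\Pi_{t}$ et qu'une droite non contenue dans $\Pi_{t}$ ne rencontre $\Pi_{t}$ qu'en un point, on a $L' \subset \Pi_{t}$ : le diviseur $D$ est d\'ecoup\'e sur $X_{t}$ par une droite du plan $\Pi_{t}$, donc appartient \`a $|{-}K_{X_{t}}|$. D'autre part, Riemann-Roch sur la courbe $X_{t}$ de genre $1$ donne $h^{0}(\mathcal L)=3$ pour tout fibr\'e inversible $\mathcal L$ de degr\'e $3$, et $H^{0}(X_{t},\mathcal L)$ \'etant un $k$-espace vectoriel non nul, toute classe de $\mathrm{Pic}^{3}(X_{t})(k)$ est repr\'esent\'ee par un diviseur effectif d\'efini sur $k$. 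Ces classes valent donc toutes $-K_{X_{t}}$, si bien que $\mathrm{Pic}^{3}(X_{t})(k)=\{-K_{X_{t}}\}$. Comme $\mathrm{Pic}^{3}(X_{t})$ est un torseur sous $J_{t}$, cela force $J_{t}(k)=\{0\}$.

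Tout $k$-point de $J$ se projette sur un $t \in \P^{1}(k)$. Pour les $t$ \`a fibre $X_{t}$ lisse (tous sauf un nombre fini), il est donc sur la section nulle $O$. Les fibres singuli\`eres \'etant en nombre fini et chacune de dimension au plus $1$, $J(k)$ est contenu dans une r\'eunion finie de courbes de la surface $J$. Son image $Y(k)=\phi(J(k))$ est alors contenue dans $\{y_{0}\}$ union une r\'eunion finie de courbes de $Y$, donc n'est pas Zariski-dense dans $Y$. L'hypoth\`ese $X(k)=\emptyset$ for\c{c}ant $k$ \`a \^etre infini (une surface cubique lisse sur un corps fini poss\`ede toujours un $k$-point), la non-densit\'e de $Y(k)$ interdit en particulier la $k$-unirationalit\'e de $Y$.

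\textbf{Principale difficult\'e.} Le point techniquement d\'elicat est la construction de la jacobienne $J \to \P^{1}_{k}$ sur $k$ et la v\'erification que, pour $L$ suffisamment g\'en\'eral, la contraction de sa section nulle fournit bien une surface de del Pezzo de degr\'e $1$ (avec $-K_{Y}$ ample et non seulement nef et gros), ce qui d\'ecoule du caract\`ere suffisamment g\'en\'erique du pinceau de cubiques planes sur $X$.
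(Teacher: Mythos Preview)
Your overall strategy coincides with the paper's: take the pencil of plane sections through a general line, form the associated Jacobian elliptic surface over $\P^1_k$, contract its zero section to a del Pezzo surface of degree~$1$, and show that the $k$-points of the latter lie in a proper closed subset. The gap is in your key step, where you claim $J_t(k)=\{0\}$ for every $t$ with smooth fibre.

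Your argument establishes only that $\Pic^0(X_t)=0$, i.e.\ that every degree-$3$ line bundle \emph{actually defined on $X_t$ over $k$} is isomorphic to $-K_{X_t}$. But $J_t(k)=\Pic^0(\overline{X_t})^G$ is the group of Galois-invariant line-bundle classes on $\overline{X_t}$, and since $X_t(k)=\emptyset$ the natural map $\Pic(X_t)\to\Pic(\overline{X_t})^G$ need not be surjective; the obstruction lies in ${\rm Br}(k)$. Your sentence ``toute classe de $\Pic^3(X_t)(k)$ est repr\'esent\'ee par un diviseur effectif d\'efini sur $k$'' conflates the Picard group with the $k$-points of the Picard scheme, and it is precisely this conflation that is unjustified: Riemann--Roch applies to an honest line bundle $\mathcal L$ on $X_t$, not to a mere $G$-invariant class on $\overline{X_t}$. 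What one does obtain, from the exact sequence
\[
0 \longrightarrow \Pic^0(X_t) \longrightarrow J_t(k) \longrightarrow {\rm Br}(k),
\]
together with the fact that the image is killed by the index of $X_t$ (which divides~$3$, since $X_t$ carries the degree-$3$ point $P$), is only the weaker conclusion $J_t(k)\subset J_t[3](k)$. This is exactly how the paper proceeds: it concludes that the $k$-points of the Jacobian surface are contained in the union of the singular fibres and the closure of the $3$-torsion subscheme of the smooth part, which is still a proper closed subset and therefore still yields the non-density of $Y(k)$. So your argument can be repaired along these lines, but as written the step from $\Pic^0(X_t)=0$ to $J_t(k)=\{0\}$ is a genuine gap.
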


Un point de degr\'e 3 d\'ecoup\'e par une droite sera dit ``align\'e''.

\begin{proof}
Comme $X(k)=\emptyset$, on n'a pas non plus de point quadratique sur $X$.
Soient $\k$ une cl\^oture alg\'ebrique de $k$ et  $G={\rm Gal}(\k/k)$. On note $\X=X \times_{k}\k$.
Soit $D \subset \P^3_{k}$ une droite qui ne rencontre aucune des 27 droites de $\X$.

Le pinceau $L\simeq \P^1_{k}$ des plans de $\P^3_{\k}$ contenant $D$ d\'ecoupe donc sur $\X$ soit
une cubique lisse, soit une cubique avec une unique singularit\'e.
 
Soit $Q=X \cap D$. C'est un point ferm\'e de degr\'e 3,
qui sur $\k$ correspond \`a 3 points dont aucun n'est situ\'e sur une droite de $\X$.
Soit $q= Y \to X$ l'\'eclat\'e de $X$ en $P$.
Soit $R \subset Y$ le diviseur exceptionnel.
Sur $\k$, ceci donne naissance \`a trois courbes $R_{i}$, chacune isomorphe \`a
une droite projective sur $\k$.
La famille des plans passant par $L$ d\'efinit   un
 morphisme $p : Y \to  L$ dont les fibres sont pr\'ecis\'ement les
 cubiques mentionn\'ees ci-dessus. En particulier la fibration
 $Y\times_{k}\k \to \P^1_{\k}$ est relativement minimale.
 
Sur $\k$, le morphisme $p$ admet une section, car le diviseur exceptionnel  $q^{-1}(P)$
se d\'ecoupe en trois courbes isomorphes \`a $\P^1_{\k}$ que $p$
applique isomorphiquement sur $\P^1_{\k}$.

Chaque fibre $Y_{m}=p^{-1}(m)$ au-dessus d'un $k$-point $m \in L(k)$
contient le point ferm\'e $Q$. Supposons $Y_{m}$ lisse. Le th\'eor\`eme de Riemann-Roch sur
la courbe $Y_{m}$, qui est de genre 1, montre qu'un point ferm\'e $Q \in Y_{m}$ qui est 
de degr\'e 3   est  align\'e sur $Y_{m} \subset X$  
si et seulement si le diviseur $Q-P$, qui est de degr\'e z\'ero,
a une classe nulle dans $\Pic(Y_{m})$.

Si donc il existe une classe de degr\'e 3 dans $\Pic(Y_{m})$ qui est distincte de 
la classe de $Q$, alors il existe sur $Y_{m}$, et donc sur $X$, un point ferm\'e
de degr\'e 3 non align\'e.

 Soit $F=k(L)$, resp. $E=\k(L)$  le corps des fonctions rationnelles sur $L$, resp. sur $L_{\k}$.
  Soit $Y_{\eta}/F$ la fibre g\'en\'erique de $p$.
 Soit $W_{\eta}/F$ la jacobienne de la courbe $Y_{\eta}$. C'est une courbe elliptique sur $F$.
 Soit $W \to \P^1_{k}$ le mod\`ele  propre r\'egulier minimal  de  $W_{\eta}/F$ 
 (existence : \cite[Chap. 7]{Sha}; unicit\'e :
 \cite[Chap. 8]{Sha}).

Je dis qu'alors $W_{\k} \to \P^1_{\k}$ est le mod\`ele propre r\'egulier minimal  de 
 la $E$-courbe elliptique $W_{\eta}\times_{F}E$. 
 La minimalit\'e est le point non \'evident. Faute d'avoir trouv\'e une r\'ef\'erence
 dans la litt\'erature, je donne une d\'emonstration.  
Supposons que $W_{\k}$ contienne une courbe exceptionnelle de premi\`ere
esp\`ece $D_{1}$, donc lisse de genre z\'ero et satisfaisant $(D_{1}.D_{1})=-1$,
contenue dans une fibre. Supposons que cette courbe admet
une conjugu\'ee $D_{2}$  sous Galois qui la rencontre, et donc est contenue dans la
m\^{e}me fibre g\'eom\'etrique.  Alors 
$$(D_{1}+D_{2})^2= (D_{1})^2 + (D_{2})^2 + 2 (D_{1}.D_{2}) = -2+ 2 (D_{1}.D_{2}) \geq 0,$$
 donc, vu les propri\'et\'es de la forme d'intersection sur une fibre, qui est semi-d\'efinie n\'egative
 \cite[Chap. 6, p. 91]{Sha}, on a
 $(D_{1}+D_{2})^2=0$, et $D_{1}+D_{2}$ est un multiple rationnel de la fibre contenant 
 $D_{1}$ et $D_{2}$. Cette fibre contient une composante de multiplicit\'e 1,
 comme on voit par intersection avec la section nulle de $W_{\k} \to \P^1_{\k}$.
 Ainsi la fibre est $D_{1}+D_{2}$, et l'on a $(D_{1}.D_{2})=1$. Mais ceci n'est
 pas possible, car le genre  g\'eom\'etrique des fibres serait z\'ero. On voit donc
 que les divers conjugu\'es de $D_{1}$ sont dans des fibres distinctes. Mais alors
 leur somme d\'efinit un diviseur sur la $k$-vari\'et\'e $W$ que  le crit\`ere   de
 Castelnuovo  \cite[Chap. 6, p. 102]{Sha}   permet de contracter, contredisant le fait que $W \to \P^1_k$
 est minimal.

 Comme $Y_{\eta}\times_{F}E$ poss\`ede les points  $E$-rationnels
 correspondant aux courbes $R_{i} $,
 le choix d'une de ces courbes $R_{i}$ d\'efinit un $E$-isomorphisme de courbes :
 $W_{\eta}\times_{F}E \simeq  Y_{\eta}\times_{F}E  $.
 Vu l'unicit\'e des mod\`eles r\'eguliers propres minimaux pour les courbes
lisses de genre au moins 1 \cite[Chap. 8]{Sha}, on voit qu'il existe
  un $\P^1_{\k}$-isomorphisme  $W \times_{k}\k \to  Y \times_{k}\k$ 
induisant l'isomorphisme donn\'e sur les fibres g\'en\'eriques.

   Ainsi la $\k$-vari\'et\'e $ W\times_{k}\k$ est isomorphe \`a l'\'eclat\'e d'une surface cubique lisse
 en 3 $\k$-points align\'es. 
 Ceci montre d\'ej\`a que $W$ est une 
 $k$-surface projective et lisse g\'eom\'etriquement rationnelle
dont le faisceau canonique $K$ satisfait $(K.K)=0$.
 
La section nulle $M$ de $W\to L$  correspond sur $\k$
\`a l'\'eclatement d'un $\k$-point de $\X$ non situ\'e sur une droite de $\X$,
elle satisfait $(M.M)=-1$. On peut donc la contracter.  
On obtient une surface $W'$ qui est l'\'eclat\'ee de la surface $\X$ en deux
$\k$-points non situ\'es sur les 27 droites, et 
dont le faisceau canonique satisfait $(K.K)=1$.
Pourvu que l'on ait pris la droite $D$ initiale dans un ouvert de Zariski non vide convenable
de la grassmannienne des  droites de $\P^3_{k}$, la surface $W'$
est g\'eom\'etriquement  l'\'eclat\'ee  de  la surface cubique
en un couple g\'en\'eral de points de $\X$, et donc est une
surface de del Pezzo de degr\'e 1.

Soit $m \in L(k)$ un point \`a fibre $Y_{m}$ lisse. La jacobienne de $Y_{m}$
est la fibre $W_{m}$ de $W \to L$ en $m$.

On a la suite exacte bien connue  faisant intervenir groupes de Picard et groupes de Brauer :
$$0 \to \Pic(Y_{m}) \to \Pic(\overline{Y}_{m})^G \to {\rm Br}(k) \to {\rm Br}(Y_{m}).$$
Comme $Y_{m}$ poss\`ede un point dans une extension de degr\'e 3 de $k$,
cette suite induit une suite exacte
$$0 \to \Pic(Y_{m}) \to \Pic(\overline{Y}_{m})^G \to {\rm Br}(k)[3]$$
o\`u $A[n]$ d\'esigne le sous-groupe de $n$-torsion d'un groupe ab\'elien $A$.
Sur les classes de degr\'e z\'ero, cette suite exacte induit une suite exacte
$$0 \to \Pic^0(Y_{m}) \to \Pic^0(\overline{Y}_{m})^G \to {\rm Br}(k)[3].$$
 Le groupe $\Pic^0(\overline{Y}_{m})^G$ est le groupe $W_{m}(k)$ des $k$-points   de
 la $k$-courbe elliptique $W_{m}$.
 Si la fl\`eche induite $W_{m}(k)  \to  {\rm Br}(k)[3]$ a un noyau non trivial,
 alors on a  $ \Pic^0(Y_{m})\neq 0$. Si $z$ est un \'el\'ement non nul dans  $ \Pic^0(Y_{m})$,
 alors la classe $Q+z$ est une classe de degr\'e~3, rationnellement \'equivalente 
 \`a un z\'ero-cycle effectif de degr\'e~3 par le th\'eor\`eme de Riemann-Roch sur
 $Y_{m}$, d\'efinissant un point ferm\'e de degr\'e~3 non rationnellement \'equivalent
 \`a $Q$ sur $Y_{m}$, et donc non align\'e.
 
La fl\`eche $W_{m}(k)  \to  {\rm Br}(k)[3]$ a un noyau non trivial 
si $W_{m}(k) \neq W_{m}(k)[3]$.

Si pour aucun $m \in L(k)$ \`a fibre lisse cette condition n'est  satisfaite,
les $k$-points de $W$ sont contenus dans la r\'eunion des  fibres singuli\`eres de $W \to \P^1_{k}$
et du ferm\'e de $W$ qui sur l'ouvert de lissit\'e correspond au sch\'ema
d\'efini par la 3-torsion. En particulier, les $k$-points ne sont pas denses
pour la topologie de Zariski sur  $W$, qui est une surface de del Pezzo de degr\'e 1.
\end{proof}

\begin{rmk}  Si le corps $k$ est fertile, par exemple si $k$ est un corps $p$-adique,
alors  pour toute surface de del Pezzo de degr\'e 1, l'ensemble $W(k)$, qui est non vide,
 est dense pour la topologie de Zariski. Dans ce cas on a donc des points ferm\'es
 de degr\'e 3 non align\'es sur toute $k$-surface cubique lisse.
 
 Ceci est en fait facile \`a voir directement. Soit $U \subset {\rm Gr}(1,\P^3_{k})$
 l'ouvert form\'e des droites qui rencontrent $X$ g\'eom\'etriquement en trois 
 point distincts. Soit $V \subset {\rm Sym}^3X$ l'ouvert  lisse, int\`egre, correspondant
 aux ensembles de 3 points g\'eom\'etriques distincts. 
 On a un $k$-plongement ferm\'e de $U$, de dimension~4, dans $V$, de dimension~6,
 identifiant $U(\k)$ avec les triplets de points g\'eom\'etriques distincts align\'es.
 L'image de $U(k)$ dans $V(k)$ d\'efinit des $k$-points, lisses, de $U$. Ainsi $V(k)$ est non vide,
et donc,  si $k$ est fertile, les points de $V (k)$ sont denses pour la topologie de Zariski sur $V$,
 et   il en existe hors de $U$.
  \end{rmk}
 
\begin{rmk} 
C'est une question ouverte   si pour toute surface de del Pezzo
de degr\'e 1 sur un corps $k$ de caract\'eristique z\'ero, les points rationnels
sont denses pour la topologie de Zariski.  On consultera
\cite{SvL} pour  des r\'esultats partiels. Plus g\'en\'eralement,
c'est aussi une question ouverte
si, pour une famille lisse non g\'eo\-m\'e\-tri\-quement isotriviale $W \to \P^1_{k}$
 de fibre g\'en\'erique une courbe elliptique, les points rationnels sont
 denses pour la topologie de Zariski. Ces probl\`emes sont ouverts 
 d\'ej\`a pour $k$ le corps $\Bbb Q$ des rationnels.
 \end{rmk}
 
 \begin{rmk}
 Soit $k=\Q$. Soient $p, q$  deux nombres premiers distincts, et distincts de $3$.
Soit $C \subset \P^2_{\Q}$ la cubique lisse sur $\Q$ d\'efinie par l'\'equation 
  $$x^3+pqy^3+q^2z^3=0.$$
  On a $C(\Q_{q})=\emptyset$, donc $C(\Q)=\emptyset$.
  La courbe jacobienne  $J$ de $C$ est donn\'ee par
  $$x^3+y^3+pz^3=0.$$
 Si $p$ est congru \`a 5 modulo 9,
  on sait \cite[Chap. 15, Thm. 3]{Mo} que l'on a $J(\Q)=0$.
  Comme on a ${\rm Pic}^0(C) \subset J(\Q)$, 
  ceci implique ${\rm Pic}^0(C)=0$.
  En utilisant  le th\'eor\`eme de Riemann-Roch sur la courbe $C$, on  en d\'eduit que
  tout point ferm\'e de degr\'e 3 sur $C$ est align\'e.  
  Ceci r\'epond \`a une question de C.~Shramov.
    \end{rmk}

 La d\'emonstration du th\'eor\`eme suivant est enti\`erement parall\`ele 
 \`a celle du th\'eor\`eme \ref{aligne} et est laiss\'ee au lecteur.
 
 \begin{thm}
Soit $X$ une surface del Pezzo de degr\'e 2 sur un corps $k$ de caract\'eristique nulle,
 sans point rationnel. Si tout point ferm\'e de degr\'e 2  sur $X$ 
est  image r\'eciproque d'un point rationnel de $\P^2_{k}$ via le morphisme
 anticanonique $X \to \P^2_{k}$, alors
il existe une surface de del Pezzo de degr\'e 1 sur $k$
 dont les points $k$-rationnels ne sont pas denses pour la topologie de Zariski,
 et donc qui en particulier n'est pas $k$-unirationnelle.
\end{thm}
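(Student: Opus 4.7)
The plan is to mimic the proof of Theorem \ref{aligne} almost verbatim, with pencils of lines through a rational point of $\P^2$ replacing pencils of planes through a line of $\P^3$, and the prime $2$ replacing the prime $3$ throughout. First I would pick a general $k$-rational point $m$ of $\P^2$ such that, writing $\pi:X\to \P^2_k$ for the anticanonical double cover, the fibre $Q=\pi^{-1}(m)$ is a separable closed point of degree $2$ (possible since $\pi$ is generically \'etale in characteristic zero). Let $L\simeq \P^1_k$ be the pencil of lines in $\P^2_k$ through $m$, and let $q:Y\to X$ be the blow-up of $X$ at $Q$. The pullback of this pencil by $\pi$ is a pencil of arithmetic genus $1$ curves in $\lvert -K_X\rvert$ all passing through $Q$, which after blowing up becomes a fibration $p:Y\to L$. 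Over $\k$, the exceptional divisor $q^{-1}(Q)$ splits into two disjoint copies of $\P^1$ swapped by Galois, each mapping isomorphically onto $L$ and thus supplying a $\k$-section of $p$.

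Next I would introduce the jacobian $W_\eta/F$ of $Y_\eta/F$, where $F=k(L)$, and let $W\to \P^1_k$ be its minimal proper regular model. The Castelnuovo and semi-negative-definiteness argument used in the proof of Theorem \ref{aligne} carries over unchanged to show that $W\times_k\k$ is also a minimal model; combined with the existence of $\k(L)$-rational points on $Y_\eta\times_F \k(L)$ coming from the exceptional components, this yields an isomorphism $W\times_k\k\simeq Y\times_k\k$. For $m$ general, the two geometric points of $Q$ avoid all $(-1)$-curves of $\X$, so $W\times_k\k$ is the blow-up of the dP2 $\X$ at two general points, and $(K_W.K_W)=0$. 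The identity section $M$ of $W\to L$ is a $k$-rational $(-1)$-curve; contracting it produces a smooth $k$-surface $W'$ with $(K_{W'}.K_{W'})=1$ which, for $m$ general, is a del Pezzo surface of degree $1$.

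Finally I would analyse $W(k)$ fibre by fibre. For $m'\in L(k)$ with $Y_{m'}$ smooth, the degree $2$ point $Q$ on $Y_{m'}$ yields the exact sequence
$$0\to\Pic^0(Y_{m'})\to W_{m'}(k)\to \on{Br}(k)[2].$$
Note $Y_{m'}(k)=\emptyset$, since $X(k)=\emptyset$ and the two $\k$-components of $q^{-1}(Q)$ are Galois-conjugate and disjoint. Hence every effective divisor of degree $2$ on $Y_{m'}$ is a single degree $2$ closed point; under the hypothesis, this point either equals $Q$ itself or has the form $\pi^{-1}(p)|_{Y_{m'}}$ for some $p\in\ell_{m'}(k)$, and all such divisors share the class $[Q]\in\Pic^2(Y_{m'})$, being pullbacks of degree $1$ divisors from $\ell_{m'}\simeq\P^1$ via the double cover $Y_{m'}\to\ell_{m'}$. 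Riemann-Roch on the genus $1$ curve $Y_{m'}$ makes every degree $2$ class effective, so the unique $k$-rational class in $\Pic^2(Y_{m'})$ is $[Q]$, whence $\Pic^0(Y_{m'})=0$ and $W_{m'}(k)\hookrightarrow \on{Br}(k)[2]$ is $2$-torsion. Therefore $W(k)$ lies in the union of the singular fibres of $W\to\P^1_k$ and the relative $2$-torsion subscheme on the smooth locus, a $1$-dimensional closed subset; its image $W'(k)$ is consequently not Zariski dense in $W'$, and $W'$ cannot be $k$-unirational. The main obstacle, as in Theorem \ref{aligne}, is the minimality of $W\times_k\k$; everything else is a formal transcription.
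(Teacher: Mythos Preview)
Your proposal is correct and is precisely the parallel argument the paper has in mind: the paper explicitly states that the proof is ``enti\`erement parall\`ele \`a celle du th\'eor\`eme \ref{aligne} et est laiss\'ee au lecteur'', and your transcription (lines through a $k$-point of $\P^2$ replacing planes through a line of $\P^3$, the prime $2$ replacing $3$, the anticanonical pencil of genus~$1$ curves replacing the pencil of plane cubic sections) faithfully carries this out. The only place requiring care---the minimality of $W\times_k\k$ via the Castelnuovo/semi-negativity argument---goes through verbatim, as you note.
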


\end{document}